\documentclass[a4paper,10pt,reqno]{amsart}
\usepackage[ams]{optional}

\usepackage{amssymb, amsmath, amsthm, amsfonts}
\opt{ams}{\usepackage{amsthm}}
\usepackage{graphicx, psfrag, gensymb}
\usepackage{bbm}
\usepackage{graphicx,mathrsfs}
\usepackage{stmaryrd} 
\usepackage{array,blkarray}
\usepackage{xypic} 
\usepackage{longtable}
\xyoption{all}

\usepackage{fix-cm}
\usepackage{comment}

\usepackage[colorlinks,hyperfootnotes=false]{hyperref} 

\usepackage{float}
\usepackage{setspace} 

\newcolumntype{C}[1]{>{\centering\arraybackslash$}p{#1}<{$}}

\usepackage{tikz,tikz-cd,braids}
\usetikzlibrary{matrix,arrows,decorations.pathmorphing,decorations.pathreplacing,calc}
\tikzset{commutative diagrams/diagrams={baseline=-2.5pt},commutative diagrams/arrow style=tikz}
\tikzset{
        vertex/.style={circle,fill=black,inner sep=1pt,outer sep=3pt},
        dotarrow/.style={dotted,->},
        triangulation/.style={line width=0.5pt,dotted},
        gap/.style={inner sep=0.5pt,fill=white}
        }
\def\sepForEquals{0.2ex}

\opt{ams}{
\addtolength{\oddsidemargin}{-.6in}
\addtolength{\evensidemargin}{-.6in}
\addtolength{\textwidth}{1.2in}
\addtolength{\marginparwidth}{-6.in}

\addtolength{\topmargin}{-.3in}
\addtolength{\textheight}{0.6in}

}

\setlength{\marginparwidth}{0.8in}
\newcommand{\marginparstretch}{0.8}
\let\oldmarginpar\marginpar
\renewcommand\marginpar[1]{\oldmarginpar[\framebox{\setstretch{\marginparstretch}\begin{minipage}{\marginparwidth}{\raggedleft\footnotesize #1}\end{minipage}}]{\framebox{\setstretch{\marginparstretch}\begin{minipage}{\marginparwidth}{\raggedright\footnotesize #1}\end{minipage}}}}

\usepackage{subcaption}

\newcommand{\chapter}

\def\tvdots{\raisebox{4pt}{$\scalebox{0.75}{\vdots}$}}

\newcommand{\quotes}[1]{\textquoteleft{#1}'}
\newcommand{\set}[1]{\{{#1}\}}
\newcommand{\cone}[1]{\ensuremath{\operatorname{Cone}\left({#1}\right)}}
\newcommand{\genconds}[2]{\ensuremath{\left\langle\begin{array}{c|c} #1 & #2 \end{array}}\right\rangle}
\newcommand{\setconds}[2]{\ensuremath{\left\{\begin{array}{c|c} #1 & #2 \end{array}}\right\}}

\newcommand{\quotstack}[2]{\ensuremath{\left[\,#1\,/\,#2\,\right]}}
\newcommand{\quotstackinline}[2]{\ensuremath{[\,#1\,/\,#2\,]}}
\newcommand{\quotstackBig}[2]{\ensuremath{\Big[\,#1\,/\,#2\,\Big]}}
\def\Db{\mathop{\rm{D}^b}\nolimits}

\newcommand\xyhook{\ar@{^{(}->}}

\newcommand\Z{\mathbb Z}
\newcommand\C{\mathbb C}

\newcommand\isoto{\stackrel{\sim}{\To}}

\newcommand\acts{\curvearrowright}
\newcommand\into{\hookrightarrow}

\newcommand\To{\longrightarrow}

\newcommand\Hom{\operatorname{Hom}}

\renewcommand\Im{\operatorname{Im}}

\renewcommand\P{\mathbb P}

\newcommand\pt{\operatorname{pt}}

\makeatletter \@addtoreset{equation}{section} \makeatother

\DeclareCaptionSubType*{figure}

\newcommand{\pgap}{\vspace{10pt}}

\theoremstyle{plain}
\newtheorem{prop}[equation]{Proposition}
\newtheorem{thm}[equation]{Theorem}
\newtheorem{lem}[equation]{Lemma}
\newtheorem{cor}[equation]{Corollary}
\newtheorem{keythm}{Theorem}

\theoremstyle{remark}
\newtheorem{rem}[equation]{Remark}
\newtheorem*{acks}{Acknowledgements}

\theoremstyle{definition}

\newtheorem{notn}[equation]{Notation}
\newtheorem{eg}[equation]{Example}

\newcommand{\cE}{\mathcal{E}}
\newcommand{\cF}{\mathcal{F}}
\newcommand{\cG}{\mathcal{G}}
\newcommand{\cO}{\mathcal{O}}
\newcommand{\cS}{\mathcal{S}}
\newcommand{\cV}{\mathcal{V}}
\newcommand{\cX}{\mathcal{X}}
\newcommand{\cY}{\mathcal{Y}}
\newcommand{\cW}{\mathcal{W}}
\newcommand{\cN}{\mathcal{N}}

\renewcommand{\Re}{\operatorname{Re}}

\newcommand{\Res}{\operatorname{Res}}

\newcommand{\bijar}[1][]{%
 \ar[#1]
 \ar@<0.7ex>@{}[#1]|-*=0[@]{\sim}} 
\newcommand{\bijarswap}[1][]{%
 \ar[#1]
 \ar@<0.7ex>@{}[#1]|-*=0[@]{\sim}} 

\mathchardef\-="2D

\newcommand{\fin}{\scriptscriptstyle{\mathrm{fin}}}
\newcommand{\crs}{\scriptscriptstyle{\mathrm{crs}}}
\newcommand{\free}{\scriptscriptstyle{\mathrm{free}}}

\newcommand{\dualCoord}[1]{c_{#1}}
\newcommand{\polyVec}[2]{\tilde{\alpha}_{#1}({#2})}
\newcommand{\polyVecSymbol}[2]{e_{#1}({#2})}
\newcommand{\polyBaseVec}[1]{\beta_{#1}}
\newcommand{\polyTopVec}[1]{\alpha_{#1}}
\newcommand{\simplex}[1]{[{#1}]}
\newcommand{\Catni}{\mathbf{Cat}_{1}}


\def\angleA{-10}
\def\angleB{35}
\def\scaleA{1.1}
\def\scaleB{0.9}
\def\scaleC{1.2}

\newcommand{\polytope}[9]{
\def\annotate{#7}
\def\yes{yes}
\draw[line width=1.5pt]
	($(#1:#2)+(#5,#6)$) coordinate(#8B1) node[right]{\ifx\annotate\yes\scriptsize $\polyBaseVec{1}$\fi} --
	(#5,#6) coordinate(#8B0) node[left]{\ifx\annotate\yes\scriptsize $\polyBaseVec{0}$\fi} -- 
	($(#3:#4)+(#5,#6)$) coordinate(#8B2) node[below]{} -- cycle; 
\draw[line width=1.5pt]
	($(#1:#2)+(#5,#6+#9)$) coordinate(#8A1) node[right]{\ifx\annotate\yes\scriptsize $\polyTopVec{1}$\fi} --
	(#5,#6+#9) coordinate(#8A0) node[left]{\ifx\annotate\yes\scriptsize $\polyTopVec{0}$\fi} -- 
	($(#3:#4)+(#5,#6+#9)$) coordinate(#8A2) node[above]{\ifx\annotate\yes\scriptsize $\polyTopVec{2}$\fi} -- cycle;
\draw[line width=1.5pt] (#8B0) -- +(0,#9);
\draw[line width=1.5pt] (#8B1) -- +(0,#9);
\draw[line width=1.5pt] (#8B2) -- +(0,#9);

\coordinate (#8Centre) at ($(#5,#6)+0.33*(#1:#2)+0.33*(#3:#4)+(0,0.5)$);
}

\newcommand{\polytopeStdTriangulation}[9]{

\polytope{#1}{#2}{#3}{#4}{#5}{#6}{#7}{#8}{#9}

\draw[triangulation] (#8B1) -- (#8A0);
\draw[triangulation] (#8B2) -- (#8A0);
\draw[triangulation] (#8B2) -- (#8A1);

\def\labelsep{0.1}
\coordinate (#8L0) at ($0.5*#9*(#8B0)+0.5*#9*(#8A0)-(\labelsep,0)$);
\coordinate (#8L1) at ($0.5*#9*(#8B1)+0.5*#9*(#8A1)-(\labelsep,0)$);
\coordinate (#8L2) at ($0.5*#9*(#8B2)+0.5*#9*(#8A2)+(\labelsep,0)$);

\def\arrowlen{0.5}
\draw[->] ($(#8L0)+(\angleA:-\arrowlen)$) node[inner sep=0.5pt,fill=white] {\scriptsize $\Delta_0$} -- (#8L0);
\draw[->] ($(#8L1)+(\angleA:\arrowlen)$) node[inner sep=0.5pt,fill=white] {\scriptsize $\Delta_1$} -- (#8L1);
\draw[->] ($(#8L2)+(\angleB:\arrowlen)$) node[inner sep=0.5pt,fill=white] {\scriptsize $\Delta_2$} -- (#8L2);

}

\newcommand\quiver[9]{ 

\def\basic{basic}

\node(#9V1) at (#7,#8) [vertex] {};
\ifx#9\basic \node[above of=#9V1] {\scriptsize$1$} \fi;
\node(#9V2) at ($(#9V1)+(1.6,0)$) [vertex] {};
\ifx#9\basic \node[above of=#9V2] {\scriptsize$2$} \fi;
\node(#9V0) at ($(#9V1)+(0.8,-1)$) [vertex] {};
\ifx#9\basic \node[below of=#9V0] {\scriptsize$0$} \fi;

\foreach \i/\j/\bshift/\ashift/\bpos/\apos/\barrow/\aarrow in {
	2/1/.3/-.3///#1/#2,
	1/0/-.5/.5///#3/#4,
	0/2/-.5/.5///#5/#6} {
	\draw[\barrow,bend left=15,looseness=1] (#9V\i) to node[gap, \bpos ,pos=0.5]{\scriptsize$b_\j$} (#9V\j);
	\draw[\aarrow,bend left=15,looseness=1] (#9V\j) to node[gap, \apos ,pos=0.5]{\scriptsize$a_\j$} (#9V\i);
	};
	
	\coordinate (#9Centre) at ($(#9V1)+(0.8,-0.5)$)

}

\newcommand{\defBGColor}{gray!20}
\newcommand{\defCurveCoords}[2]{
 (\halfwidth*-1*#1+\pinch*#1-#1*#2,#2) .. controls (-1/2*\halfwidth*#1-#1*#2,3/4*\intersection+#2) and (-1/4*\halfwidth*#1-#1*#2,\intersection+#2) .. (\crossing*#1-#1*#2,\intersection+\crossing+#2)
}
\newcommand{\defCurveCoordsRev}[2]{
(\crossing*#1-#1*#2,\intersection+\crossing+#2) .. controls (-1/4*\halfwidth*#1-#1*#2,\intersection+#2) and (-1/2*\halfwidth*#1-#1*#2,3/4*\intersection+#2) .. (\halfwidth*-1*#1+\pinch*#1-#1*#2,#2)
}


\begin{document}

\title[Mixed braid group actions from surface singularities]{Mixed braid group actions from deformations of surface singularities}%
\author{Will Donovan}
\address{Will Donovan, The Maxwell Institute, School of Mathematics, James Clerk Maxwell Building, The King's Buildings, Mayfield Road, Edinburgh, EH9 3JZ, U.K.}
\email{Will.Donovan@ed.ac.uk}
\author{Ed Segal}
\address{Ed Segal, Department of Mathematics, Imperial College London, London, SW7 2AZ, U.K.}
\email{edward.segal04@imperial.ac.uk}

\begin{abstract}
We consider a set of toric Calabi--Yau varieties which arise as deformations of the small resolutions of type $A$ surface singularities. By careful analysis of the heuristics of B-brane transport in the associated GLSMs, we predict the existence of a mixed braid group action on the derived category of each variety, and then prove that this action does indeed exist. This generalizes the braid group action found by Seidel and Thomas for the undeformed resolutions. We also show that the actions for different deformations are related, in a way that is predicted by the physical heuristics.
\end{abstract}
\subjclass[2010]{Primary 14F05, 18E30; Secondary 14J33, 20F36}

\thanks{W.D. is grateful for the support of EPSRC grant EP/G007632/1, and E.S. for the support of an Imperial College Junior Research Fellowship.}
\maketitle

\setcounter{tocdepth}{2}
\tableofcontents

\section{Introduction}\label{sect.introduction}

Let $V$ be a vector space, and let $T$ be a torus acting on $V$. If we pick a character $\theta$ of $T$, we have a stability condition for the associated GIT problem, and can form  a GIT quotient $X_\theta = V \sslash_\theta T$. We'll borrow some physics terminology and refer to these different possible GIT quotients as \textit{phases}.

There is now a well-developed general theory \cite{DHL, BFK}, inspired by the physics paper \cite{HHP} but going back to ideas of Kawamata, that allows us to compare the derived categories of different phases. In particular, if $T$ acts through $\operatorname{SL}(V)$, so that all the phases $X_\theta$ are Calabi--Yau, then all their derived categories $\Db(X_\theta)$ will be equivalent. These equivalences are not canonical: whenever we cross a wall in the space of stability conditions, we have a countably infinite set of equivalences between the two phases that lie on either side.  Consequently we can produce autoequivalences of the derived category of a single phase  $X_\theta$: we pass through an equivalence to the derived category of a different phase, then we pass back again, using a different equivalence.  In this way we can produce a whole group of autoequivalences acting on $\Db(X_\theta)$. Then we can ask the question: what group is it?

 To get a prediction, we can turn to quantum field theory. The data of $T$ acting on $V$ determines a Gauged Linear Sigma Model, a particular kind of supersymmetric $2$-dimensional QFT. The model has a parameter, called the (complexified) Fayet--Iliopoulos parameter, which takes values in a  space which we'll call $\cF$.   This parameter space $\cF$ is a complex manifold (or orbifold) locally modelled on the dual Lie algebra $\mathfrak{t}^*$ of $T$, but it can have non-trivial global structure. In particular limiting regions in $\cF$, the theory is expected to reduce to a sigma model with target one of the phases $X_\theta$, with different phases occurring at different limits.  

There is also believed to be a triangulated category associated to the GLSM, called the category of B-branes. If we assume the Calabi--Yau condition then this category is independent of the FI parameter, but only up to isomorphism. We should think of it as a `local system' of categories over the space $\cF$. When we approach one of the limiting regions, the category of B-branes becomes identified (not quite canonically\footnote{The ambiguity is tensoring by line bundles.}) with the derived category $\Db(X_\theta)$ of the corresponding phase. If we travel along a path in $\cF$ from one limit to another then we can perform `parallel transport' of the B-branes, and produce a derived equivalence between the corresponding phases. Similarly if we travel around a large loop in $\cF$ then we get a monodromy autoequivalence, acting on the derived category of a single phase. The derived equivalences and autoequivalences that arise from this `brane transport' are believed to be exactly the ones that arise from variation of GIT quotient in the mathematical constructions mentioned above. So this picture tells us which group we should expect to see acting on $\Db(X_\theta)$: it's the fundamental group of $\cF$.\footnote{This picture doesn't tell us whether or not the action is faithful, however.}

 Providing a  rigorous definition of this QFT is an immensely difficult problem which will probably not be resolved for many years, and consequently an intrinsic definition of the FI parameter space $\cF$ is not known. Fortunately, what is known is a completely precise heuristic recipe that tells us how to construct $\cF$. The justification for this recipe (at least mathematically) is toric mirror symmetry, since $\cF$ is related to the complex moduli space of the mirror theory.\footnote{The fact that $T$ is a torus is crucial here, if we replace it with a non-abelian group then as far as we know no recipe for $\cF$ exists.}

If we apply this recipe, and  can compute $\pi_1(\cF)$, then our heuristics will have given us a precise prediction about derived autoequivalences which we can then attempt to prove. In this paper we're going to carry out this program, and prove the prediction, for a particular set of examples.
\pgap

Our examples arise from a well-known piece of geometry, namely resolutions of the $A_k$ surface singularities, and their deformations. The resulting set of GLSMs has another important feature: certain phases of each model embed, in a natural way, into particular phases of some of the other models. In other words, we're considering some toric varieties, and also some torically-embedded subvarieties. 
 
When two GLSMs are related in this way then a rough physical argument suggests that there should be a map from the FI parameter space for the ambient variety to the FI parameter space for the subvariety. We don't know a mathematical justification for this argument, but in our examples we see that these maps do indeed exist, and in fact they're covering maps. Consequently the fundamental groups of the two FI parameter spaces are related, and this suggests that the brane transport autoequivalences for the two models should also be related. We show that these predicted relationships between the examples do indeed hold.

\subsection{Main result}

We now briefly state our main theorem, leaving some of the details to Section~\ref{sect.examples_results}. Firstly, let $Y$ denote a minimal resolution of a local model for an $A_k$ surface singularity, as described explicitly in Section~\ref{section.statement}. In a seminal paper \cite{ST}, Seidel and Thomas construct a faithful action $B_{k+1} \acts \Db(Y)$ of the braid group on $k+1$ strands on the derived category of $Y$, by spherical twists. Now given a partition $\Gamma$ of the set of strands, we may define:

\begin{enumerate}
\item A {\em mixed braid group} $B_\Gamma$, namely that subgroup of the braid group $B_{k+1}$ consisting of braids which respect the partition $\Gamma$ (see \eqref{eqn.mixed_braid_defn}). 
\item A certain deformation $X_\Gamma$ of the surface $Y$ (see Section~\ref{section.construction}). This deformation has $s$ parameters, where $s+1$ is the number of pieces in the partition.
\end{enumerate}

\begin{keythm}[Corollary~\ref{cor.maincor}]\label{keythm} For each partition $\Gamma$ of $\{0,\ldots,k\}$, there is a faithful action of the mixed braid group $B_{\Gamma} \acts \Db(X_\Gamma)$ on the derived category of the deformation $X_\Gamma$.
\end{keythm}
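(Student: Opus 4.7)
The plan is to construct explicit autoequivalences of $\Db(X_\Gamma)$ corresponding to a chosen generating set for $B_\Gamma$, verify that they satisfy the mixed braid relations, and then establish faithfulness by comparison with the Seidel--Thomas action on $\Db(Y)$.

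First, I would identify the objects that produce the required autoequivalences. Within each part $I_j$ of the partition $\Gamma$, the deformation preserves a chain of rational $(-2)$-curves, giving Seidel--Thomas spherical objects whose twists implement the generators $\sigma_i$ with $i, i+1 \in I_j$. At each boundary between adjacent parts, the corresponding exceptional curve has been smoothed and no longer supports a spherical object in the narrow sense; for these ``mixing'' generators of $B_\Gamma$ I would look instead for a generalised twist arising from a spherical functor, a $\P$-like object, or a family of objects on $X_\Gamma$ that encodes the local deformation. The list of generators obtained in this way should match a standard presentation of the mixed braid group: one twist for each intra-part generator, together with mixing generators for each boundary.

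Second, I would verify the defining relations. Given a finite presentation of $B_\Gamma$, each relation should reduce to a local $\RHom$ computation between the underlying (generalised) spherical objects, applying the standard criteria: twists commute when the objects are mutually orthogonal, braid when they have a one-dimensional $\Ext^1$, and satisfy the appropriate higher relations when the corresponding $\RHom$-complex matches the model dictated by the mixing generator.

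Finally, faithfulness is the main obstacle. Here I would exploit the relationship, hinted at in the introduction via covering maps on the FI parameter spaces, between $\Db(X_\Gamma)$ and $\Db(Y)$. Concretely, I would aim to produce a functor from one category to the other which intertwines our $B_\Gamma$-action with the restriction along $B_\Gamma \hookrightarrow B_{k+1}$ of the Seidel--Thomas action; a natural candidate is parallel transport of B-branes along a lift of a path between the appropriate FI parameter spaces. Given such an intertwining, faithfulness of $B_{k+1} \acts \Db(Y)$ forces faithfulness of $B_\Gamma \acts \Db(X_\Gamma)$. The delicate point is ensuring that the constructed autoequivalences genuinely assemble into the subgroup inclusion $B_\Gamma \hookrightarrow B_{k+1}$, rather than into some homomorphism with unintended kernel; I expect this compatibility, rather than the relation check, to absorb most of the technical work.
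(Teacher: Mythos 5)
Your faithfulness step is essentially the paper's: the action is intertwined, via derived restriction to the surface $Y_0$, with the Seidel--Thomas action restricted to $B_\Gamma\subset B_{k+1}$, and faithfulness is inherited (the paper does this cleanly at the level of Fourier--Mukai kernels relative to the deformation base, so that restriction of kernels is literally a functor and no essential-surjectivity issues arise). The gap is in the first two steps: the construction of the generators and, above all, the verification of the relations.

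First, your generating set is not right. For a part of size $\geq 2$ and a nontrivial deformation ($s\geq 1$) the $(-2)$-curves inside $X_\Gamma$ move in an $s$-parameter family, and no individual curve supports a spherical object (its normal bundle acquires trivial summands, so the $\Ext$-algebra is too big); the intra-part generators must act by \emph{family} spherical twists around spherical functors, not Seidel--Thomas twists around spherical objects. More seriously, ``intra-part twists plus one mixing generator per boundary'' does not generate $B_\Gamma$ in general: already for the pure braid group $P_3$ your recipe gives two generators, while $P_3$ abelianizes to $\Z^3$. Any honest version of your strategy therefore needs a full presentation of the mixed braid group, whose relations mix family twists with flop-flop compositions, and there are no off-the-shelf $\RHom$ criteria (orthogonality $\Rightarrow$ commutation, one-dimensional $\Ext^1$ $\Rightarrow$ braiding) covering these functors; this is exactly the point where the ``standard criteria'' appeal breaks down. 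The paper avoids both problems by never fixing a presentation of $B_\Gamma$: it acts by a groupoid on all the phases $X_\Gamma^\sigma$, whose generators are single wall-crossings and whose relations are ordinary braid relations. These are proved first on the ambient versal deformation $X_{\Gamma_{\fin}}$ by a windows argument --- all six wall-crossing window subcategories in $\Db(\cX)$ coincide with one subcategory $\cV$ generated by three line bundles, so the braid hexagon commutes formally --- and are then transported to every $X_\Gamma$ by restricting the kernels along the deformation base. $B_\Gamma$ then appears only at the end, as the isotropy group of the groupoid, which is what makes the relation check tractable; your proposal as written has no workable substitute for this step.
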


When $\Gamma$ has only one part, the theorem recovers the original Seidel--Thomas braid group action on~$Y$: in this case, the usual braid generators $t_i$ act by  spherical twists. When $\Gamma$ is the finest partition, with $k+1$ parts, $X_\Gamma$ is a versal deformation of $Y$, and $B_\Gamma$ is the {\em pure braid group}, consisting of braids which return each strand to its original position. In this case, the braids $t_i^2$ act by family spherical twists: these fibre over the deformation base (Section~\ref{section.geom_descrip_equiv}), deforming the original spherical twists on~$Y$.

The actions given in Theorem~\ref{keythm} also appear in the context of geometric representation theory. They may be obtained from work of Bezrukavnikov--Riche \cite[Section 4]{BezRic}, who construct such actions by representation-theoretic means on slices of the Grothendieck--Springer simultaneous resolution. Similar constructions appear in Cautis--Kamnitzer \cite[Section 2.5]{CauKam}, in the study of categorified quantum group representations. However our focus is somewhat different, as we see these actions as arising from the toric geometry of the varieties $X_\Gamma$, and our main aim is to relate this to the physical heuristics.

We remark very briefly on our method of proof (which is quite different to \cite{BezRic,CauKam} or \cite{ST}). In our construction, the autoequivalences which generate the group action turn out to correspond to `windows' in the derived category of the stack associated to a certain GIT problem. After a careful analysis of these windows, a conceptually simple proof of the braid relations emerges, in Section~\ref{sect.braid_relns_ambient_space}. We hope that this approach will be useful in more general situations, where the machinery of \cite{DHL, BFK} can be applied. See Remark~\ref{rem.philosophy_and_poss_generalizatn} for more discussion on this point.

\subsection{Outline}

The structure of this paper is as follows.

\begin{itemize}
\item Section~\ref{sect.examples_results} explains some simple examples, and then gives a detailed statement of our main result in Section~\ref{section.statement}.
\item Section~\ref{sect.toriccalculations} details the toric geometry of our examples $X_\Gamma$, and of their flops. This yields derived equivalences between phases in Section~\ref{sect.derivedequivalencesfromwalls}.
\item Section~\ref{sect.FIps} recalls the heuristics which allow us to describe the FI parameter spaces for our examples in Section~\ref{sect.MSheuristics}, and explains how to apply them in Section~\ref{sect.resultsForExamples}. We go on to identify the large-radius limits in Section~\ref{sect.LRLs}, and the fundamental groups which we expect to act on the $\Db(X_\Gamma)$ in Section~\ref{sect.fundamentalgroups}.
\item Section~\ref{sect.mixedbraidgroupactions} proves the physical prediction of Section~\ref{sect.FIps}. Specifically, we show that a certain groupoid $T_\Gamma$ acts (faithfully) via derived equivalences and autoequivalences on the phases of $X_\Gamma$. The isotropy group of $T_\Gamma$ is $B_\Gamma$, and so Theorem~\ref{keythm} follows as an immediate corollary. Section \ref{sect.posetGroupoidActions}  proves the expected relationships between the actions for different partitions $\Gamma$.
\end{itemize}

Appendix~\ref{list of notations} lists our main notations, along with cross-references to their explanations.

\begin{acks}This project grew out of enlightening discussions between W.D. and Iain Gordon. W.D. is also grateful to Adrien Brochier, Jim Howie and Michael Wemyss for valuable conversations and helpful suggestions. E.S. would like to thank Tom Coates, Kentaro Hori and especially Hiroshi Iritani for their patient explanations and many extremely useful comments and suggestions.\end{acks}

\section{Examples and results}\label{sect.examples_results}
\subsection{The $A_1$ examples}\label{sect.A1case}
\subsubsection{The $3$-folds}\label{sect.thethreefolds}
To set the stage, we recall some very well-known constructions. Consider the stack
$$\cX =  \quotstack{\C^4_{b_0,a_0,b_1,a_1}}{\C^*} $$
where the $\C^*$ acts with weights $(1,-1,-1,1)$. The coarse moduli space of this stack is the $3$-fold ODP singularity $\set{uv=z_0z_1}$, which is the versal deformation of the $A_1$ surface singularity.\footnote{The variables here are the invariant functions $u=a_0 a_1$, $v=b_0 b_1$, $z_0 = a_0 b_0$, and $z_1 = a_1 b_1$.}  If we pick a character of $\C^*$ then we can form the associated GIT quotient. There are two possible quotients which we'll denote by $X_+$ and $X_-$: each one is isomorphic to the total space of the bundle $\cO(-1)^{\oplus 2}_{\P^1}$. This is the standard Atiyah flop.

These two resolutions $X_+$ and $X_-$ have a common roof
\begin{equation}\label{roofofatiyahflop}\begin{tikzcd}[column sep=8pt,row sep=12pt] \, & \cO(-1,-1)_{\P^1\times \P^1} \arrow[-,transform canvas={xshift=+\sepForEquals}]{d} \arrow[-,transform canvas={xshift=-\sepForEquals}]{d} \\
\, & \tilde{X} \arrow[swap]{ddl}{\pi_{+}} \arrow{ddr}{\pi_{-\phantom{+}}} &\, \\
\\
X_+& \,& X_- \end{tikzcd}\end{equation}

Bondal and Orlov \cite{BO} showed that this correspondence induces a derived equivalence
$$F=(\pi_-)_*(\pi_+)^*: \Db(X_+) \isoto \Db(X_-). $$

There is another approach to this derived equivalence, introduced by the second-named author in \cite{segal} and based on the physics arguments of \cite{HHP}. We view the GIT quotients as open substacks
$$ \iota_{\pm}: X_{\pm}\into \cX, $$
and we define certain subcategories of $\Db(\cX)$, nicknamed `windows', by
$$\cW(k) = \left\langle\,\cO(k), \cO(k+1) \,\right\rangle \subset \Db(\cX)$$
for $k\in \mathbb{Z}$, i.e. $\cW(k)$ is the full triangulated subcategory generated by this pair of line bundles. Then it is easy to show that for any $k$ both functors
$$\iota_\pm^*: \cW(k) \to \Db(X_\pm) $$
are equivalences, so we can define a set of derived equivalences between $X_+$ and $X_-$ by
\begin{equation}\label{eqn.3fold_window_equiv}\psi_k: \Db(X_+) \xrightarrow{(\iota_+^*)^{-1}} \cW(k) \xrightarrow{\phantom{(}\iota_-^*\phantom{)}} \Db(X_-). \end{equation}
The relationship between these two approaches is the following statement:

\begin{prop} \label{prop.windowisflop}The window equivalence $\psi_{-1}$ and the Bondal--Orlov equivalence $F$ coincide:
$$ \psi_{-1} = F $$
\end{prop}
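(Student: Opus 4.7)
The plan is to verify that $F$ and $\psi_{-1}$ agree on a natural generating set of $\Db(X_+)$, and then conclude via a uniqueness argument for Fourier--Mukai equivalences.

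First I would observe that $\Db(X_+)$ is generated as a triangulated category by $\cO_{X_+}$ and $\cO_{X_+}(-1)$, which are the restrictions via $\iota_+^*$ of the window generators $\cO_\cX, \cO_\cX(-1) \in \cW(-1)$. By construction, $\psi_{-1}$ sends $\cO_{X_+} \mapsto \cO_{X_-}$ and $\cO_{X_+}(-1) \mapsto \cO_{X_-}(-1)$. I would then compute $F$ on the same pair. For $\cO_{X_+}$: since $X_-$ is smooth and $\pi_-$ is a birational resolution with connected fibres, $(\pi_-)_* \cO_{\tilde X} = \cO_{X_-}$, hence $F(\cO_{X_+}) = \cO_{X_-}$. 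For the other generator, I would use the toric description of the common roof in \eqref{roofofatiyahflop}: the line bundle $(\pi_+)^* \cO_{X_+}(-1)$ restricts to $\cO(-1,0)$ on the exceptional $\P^1\times \P^1$, and a short base-change calculation along $\pi_-$ (which collapses the second $\P^1$ factor) yields $F(\cO_{X_+}(-1)) = \cO_{X_-}(-1)$, again matching $\psi_{-1}$.

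Next I would verify that the two functors induce the same map on the morphism space $\Hom_{X_+}(\cO_{X_+}(-1), \cO_{X_+}) \cong H^0(X_+, \cO_{X_+}(1)) \cong \C\langle b_0, a_1\rangle$. For $\psi_{-1}$ this is essentially definitional: the map factors through $\Hom_\cX(\cO_\cX(-1), \cO_\cX)$, which is identified with the same weight-$1$ invariant functions on $\C^4$. For $F$ one traces the same sections through pullback and pushforward along the common roof: on the open locus where $\pi_\pm$ are isomorphisms, $(\pi_+)^* \cO_{X_+}(1)$ and $(\pi_-)^* \cO_{X_-}(1)$ both agree with the pullback of $\cO_\cX(1)$, so the induced map on global sections is precisely the natural identification.

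Finally I would invoke the standard principle that two Fourier--Mukai equivalences which agree on a generating tilting pair (with matching induced maps on morphism spaces) are isomorphic as functors, closing the argument. I expect the main obstacle to be the bookkeeping in the morphism-space step, where one must carefully track natural identifications through two rather different constructions, namely the window lift versus the common roof. An alternative, more direct route, essentially that of \cite{segal}, would be to compute the Fourier--Mukai kernel of $\psi_{-1}$ as an object of $\Db(X_+ \times X_-)$ and identify it with $\cO_{\tilde X}$, which is the kernel of $F$; I would view this as a more structural proof but one requiring more setup.
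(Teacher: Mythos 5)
Your skeleton is the same as the paper's (check the two functors on the generating pair $\cO$, $\cO(-1)$ and on morphisms between them, then conclude), but the decisive step --- the identification of $F(\cO_{X_+}(-1))$ --- is where your argument breaks down, and it is exactly where the content of the proposition sits. If $\pi_+^*\cO_{X_+}(-1)$ restricts to $\cO(-1,0)$ on the exceptional divisor $E\cong\P^1\times\P^1$, so that the first factor is the one mapped isomorphically to the exceptional curve $C_+\subset X_+$, then $\pi_-$ collapses the \emph{first} factor, not the second; the restriction of $\pi_+^*\cO_{X_+}(-1)$ to a $\pi_-$-fibre is therefore $\cO_{\P^1}(-1)$, which has no cohomology, and no ``short base-change calculation'' is available in any case because $\pi_-$ is not flat (the fibre dimension jumps over the flopped curve $C_-$; naive cohomology-and-base-change would even suggest the pushforward vanishes near $C_-$, which is absurd). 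The honest computation --- formal functions over the thickened fibres, or simply comparing weight-$(-1)$ semi-invariant functions via a Hartogs argument, which is what the paper's ``$F$ is the identity away from codimension $2$'' remark encodes --- shows that $(\pi_-)_*\pi_+^*\cO_{X_+}(-1)$ is the line bundle of degree $+1$ on $C_-$, i.e. $F(\cO_{X_+}(-1))\cong\iota_-^*\cO_\cX(-1)$, denoted $\cO_{X_-}(1)$ in the paper: the degree flips sign under the flop. This sign flip, matched against the identity $\iota_-^*\cO_\cX(-1)=\cO_{X_-}(1)$ on the window side, is precisely why $\psi_{-1}$ and $F$ agree on this object. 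In your write-up the symbol $\cO_{X_-}(-1)$ is doing double duty: once as the restriction of the equivariant bundle (the sense in which your claim about $\psi_{-1}$ is correct) and once as the outcome of an intrinsic degree computation on the roof (which, as sketched, comes out with the wrong ruling or the wrong sign). So the claimed match of the two functors on $\cO_{X_+}(-1)$ is asserted rather than established; fixing one convention and redoing the pushforward correctly is needed to close the gap.

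Two smaller points. First, $\Hom_{X_+}(\cO_{X_+}(-1),\cO_{X_+})\cong H^0(X_+,\cO_{X_+}(1))$ is not spanned by $b_0,a_1$ as a vector space; it is an infinite-dimensional module over the invariant functions, generated by $b_0,a_1$. What you actually need (and what the paper uses) is only that every such morphism extends to a weight-$1$ function on $\C^4$ and that both functors act on these tautologically away from a codimension-$2$ locus. Second, your closing appeal to uniqueness of an equivalence determined on the tilting pair $\cO\oplus\cO(-1)$ and its endomorphisms is the same (lightly justified) step the paper takes, and the alternative you mention --- identifying the Fourier--Mukai kernel of $\psi_{-1}$ with $\cO_{\tilde X}$ --- would also work, at the cost of more setup.
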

\begin{proof}

Consider the autoequivalence
$$\hat{F}= (\iota_-^*)^{-1} F \iota_+^* : \cW(-1) \to \cW(-1). $$
The proposition is the statement that $\hat{F}$ is the identity. Since $\cW(-1)$ is generated by $\cO(-1)$ and $\cO$, it's enough to check that $\hat{F}$ acts as the identity on these two objects, and on all morphisms between them. For the objects, one can check that $F$ sends $\cO$ to $\cO$ and $\cO(-1)$ to $\cO(1)$, which is the required statement because $\iota_{-}^*\cO(-1) = \cO(1)$. For the morphisms, we note that all morphisms between these two line bundles come from functions on $\C^4$ (there are no higher Exts), and $F$ is the identity away from a subvariety of codimension 2.
\end{proof}
 
The other $\psi_k$ can be obtained by modifying $F$ by the appropriate line bundle on $\tilde{X}$. Also see \cite[Section 3.1]{HLS} for a more general version of this statement.

Now we connect the above with the GLSM heuristics. For this GIT problem, the FI parameter space turns out to be a $\P^1$ with three punctures:
$$\cF_{\cX} = \P^1 - \{1\!:\!0,\; 0\!:\!1,\; 1\!:\!1\}$$
 The first two punctures (or more accurately, neighbourhoods of these punctures) are `large-radius' (LR) limits corresponding to the two phases $X_\pm$. The third puncture is the `conifold point' where the theory becomes singular (we'll explain this picture further in Section~\ref{sect.FIps}). Traversing a loop around one of the large-radius limits is supposed to produce an autoequivalence on the derived category of the corresponding phase, and the correct autoequivalence is just tensoring by the $\cO(1)$ line bundle.

 More interestingly, travelling along a path which starts near one LR limit and ends near the other should produce a derived equivalence between the two phases. Identify $\cF_\cX$ with $\C_\zeta - \set{0,1}$, then take $\log(\zeta)$ to get the infinite-sheeted cover $\C_{\log(\zeta)} - 2\pi i \Z $. The LR limits lie at $\Re(\log(\zeta))\ll 0$ and $\Re(\log(\zeta))\gg 0$, so consider a path that travels from one to the other with $\Re(\log\zeta)$ increasing monotonically. The homotopy class of such a path is determined by which interval it lies in when it crosses the imaginary axis. The interpretation of the physical arguments in \cite{HHP} is that the path that goes through the interval between $2\pi i k$ and $2 \pi i (k+1)$ produces the derived equivalence  $\psi_k$ described above.

Now consider a loop that begins in $\Re(\log(\zeta))\ll 0$ and circles the origin once clockwise, without encircling any other punctures. It follows that this loop produces the autoequivalence 
$$(\psi_{-1})^{-1}\circ \psi_{0}: \Db(X_+) \to \Db(X_+).$$
In \cite{seg-don} autoequivalences of this form were called `window shifts'. It was shown there, for a larger class of examples, that these autoequivalences can be described as twists of certain spherical functors. This particular window shift is equal to a Seidel--Thomas spherical twist \cite{ST} around the spherical object $\mathcal{S} = \cO_{\P^1}(-1)$ in $\Db(X_+)$.\footnote{This particular relation was well-known long before \cite{seg-don}: see \cite[Example 5.10]{Kawamata:2002vq} for a much earlier discussion.} That is, it's equal to a functor
\begin{equation}\label{eqn.ST_twist}T_{\mathcal{S}}: \mathcal{E} \mapsto \cone{ \mathcal{S}\otimes \Hom(\mathcal{S}, \mathcal{E}) \to \mathcal{E} } \end{equation}
Given Proposition~\ref{prop.windowisflop}, we can also describe this autoequivalence as a composition of `flop' and `flop back again' (with the appropriate line bundles inserted).

\subsubsection{The surfaces}\label{sect.thesurfaces}

Now consider the stack
$$\cY =   \quotstack{\C^3_{s_0,s_1,p}}{\C^*}  $$
where the $\C^*$ acts with weights $(1,1,-2)$. The coarse moduli space of this stack is the $A_1$ surface singularity $\set{uv = z^2}$. There are again two possible GIT quotients: $Y_+$ is the total space of $\cO(-2)_{\P^1}$, and $Y_-$ is the orbifold $\quotstack{\C^2}{\Z_2}$. Most of the analysis of the previous example can be repeated verbatim, so there is an equivalence
$$\tilde{F}: \Db(Y_+) \to \Db(Y_-)$$
coming from the common birational roof, and there are windows $\tilde{\cW}(k)\subset \Db(\cY)$ defined in the same way and giving rise to equivalences
$$\tilde{\psi}_k:\Db(Y_+) \to \Db(Y_-). $$
Also, $\tilde{F}=\tilde{\psi}_{-1}$ by the same argument.

 The FI parameter space for this example turns out to be an orbifold: it's a $\P^1$ with two punctures
$$\cF_{\cY} = \P^1  -\{1\!:\!0,\; 1\!:\!1\}$$
and an orbifold point at $0\!:\!1$ with $\Z_2$ isotropy group. The phases $Y_+$ and $Y_-$ correspond to the regions near $1\!:\!0$ and $0\!:\!1$ respectively, and $1\!:\!1$ is again a conifold point. Using the functors $\tilde{\psi}_k$, and tensoring by line bundles, we can again produce an action of $\pi_1(\cF_\cY)$ on the derived category of either of the two phases. The existence of the orbifold point in $\cF_{\cY}$ corresponds to the fact that on $Y_-$, tensoring with $\cO(1)$ has order 2. Also, one can show that looping around the conifold point produces a spherical twist autoequivalence
$$T_{\tilde{\mathcal{S}}}: \Db(Y_+) \to \Db(Y_+), $$
where $\tilde{\mathcal{S}}$ is the spherical object $\cO_{\P^1}(-1)$ in $\Db(Y_+)$.

\subsubsection{Relating the examples}

Now we can ask about the relationship between these two examples. It is a well-known fact that we can include $Y_+$ as a subvariety into either of $X_+$ or $X_-$ as the zero locus of the invariant function $b_0 a_0 - b_1 a_1$ on $\C^4$, so that we have $$j_\pm: Y_+ \into X_\pm.$$ Indeed if we put the obvious symplectic form on $\C^4$ then we can view $Y_+$ as a hyperk\"ahler quotient, and the invariant function as the complex moment map. 

We claim that this fact is reflected in the FI parameter spaces for the two examples. Specifically, we make the observation that there is a 2-to-1 covering map
$$\cF_{\cX} \to \cF_{\cY} $$
sending the conifold point to the conifold point, and identifying the two LR limits in $\cF_{\cX}$ with the limit corresponding to $Y_+$ in $\cF_\cY$. To see this, just notice that we can identify $\cF_\cY$ with
 $$\quotstack{\C_\zeta - \set{0,1}}{\Z_2} $$
where the involution is $\zeta\mapsto 1/\zeta$. We'll explain this more systematically in Section~\ref{sect.FIps}.
 
Now this covering map suggests that the derived equivalences that arise from the two examples are related, in a way that corresponds to the map between the fundamental groupoids of $\cF_\cX$ and $\cF_\cY$. For example, a loop around one of the LR limits in $\cF_\cX$ projects to a loop around the LR limit in $\cF_\cY$, which corresponds to the obvious fact that
$$j_\pm^* \big( (-)\otimes \cO(1)\big)= j_\pm^*(-)\otimes \cO(1). $$
More interestingly, a path between the two LR limits in $\cF_\cX$ projects to a loop around the conifold point in $\cF_\cY$, which suggests the following proposition.
\begin{prop}\label{prop.flopinducestwist} The square below commutes.
$$\begin{tikzcd} \Db(X_+) \rar{\psi_{0}} \dar[swap]{j_+^*} & \Db(X_-) \dar{j_-^*} \\
\Db(Y_+) \rar[swap]{T_{\tilde{\cS}}} & \Db(Y_+) \end{tikzcd} $$
\end{prop}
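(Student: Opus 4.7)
The plan is to exploit the window description of $\psi_0$, namely $\psi_0 = \iota_-^* \circ L$ where $L : \Db(X_+) \isoto \cW(0)$ inverts $\iota_+^*|_{\cW(0)}$. Since $\cW(0)$ is generated by the line bundles $\cO_\cX$ and $\cO_\cX(1)$, and their restrictions under $\iota_+^*$ span $\Db(X_+)$, it suffices to produce compatible natural isomorphisms on these two objects and on the morphisms between them. The space $\Hom_\cX(\cO_\cX, \cO_\cX(1))$ is generated over the invariants by the weight-$1$ polynomials $b_0$ and $a_1$, so after the two-object check one only needs to verify naturality against these.

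On $\cO_{X_+}$, both compositions should yield $\cO_{Y_+}$: the LHS gives $j_-^*\iota_-^*\cO_\cX = j_-^*\cO_{X_-} = \cO_{Y_+}$, while on the RHS one has $j_+^*\cO_{X_+} = \cO_{Y_+}$, and I would verify $T_{\tilde\cS}(\cO_{Y_+}) = \cO_{Y_+}$ by showing $\RHom_{Y_+}(\tilde\cS, \cO_{Y_+}) = 0$. This uses Grothendieck duality for the closed immersion $i : \P^1 \into Y_+$ of codimension one: $i^!\cO_{Y_+} = \det N_{\P^1/Y_+}[-1] = \cO_{\P^1}(-2)[-1]$, giving $\RHom_{Y_+}(i_*\cO_{\P^1}(-1), \cO_{Y_+}) = \RHom_{\P^1}(\cO(-1), \cO(-2))[-1] = H^*(\P^1, \cO(-1))[-1] = 0$.

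On $\cO_{X_+}(1)$, the LHS restricts to the line bundle $j_-^*\cO_{X_-}(-1)$, which under the natural identification of $Y_+$ as $\Tot\,\cO_{\P^1}(-2)$ is $\cO_{Y_+}(-1)$ (the pullback of $\cO_{\P^1}(-1)$). For the RHS I would compute $T_{\tilde\cS}(\cO_{Y_+}(1))$ via the Koszul sequence
\[
0 \to \cO_{Y_+}(1) \xrightarrow{s} \cO_{Y_+}(-1) \to \tilde\cS \to 0,
\]
where $s$ is the tautological section of $\cO_{Y_+}(-2)$ cutting out the zero section. Using this as a two-term resolution of $\tilde\cS$, one finds $\RHom_{Y_+}(\tilde\cS, \cO_{Y_+}(1)) = \C[-1]$, and the generator of this one-dimensional $\Ext^1$ is realized precisely by the Koszul sequence itself. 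The defining triangle of the spherical twist then identifies $T_{\tilde\cS}(\cO_{Y_+}(1)) \cong \cO_{Y_+}(-1)$, matching the LHS. Compatibility on morphisms follows by tracing $b_0$ and $a_1$: by functoriality of $T_{\tilde\cS}$ and the explicit Koszul realization of the extension, both sides send each generator to the restriction of the corresponding invariant function, viewed as a map $\cO_{Y_+} \to \cO_{Y_+}(-1)$.

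The main obstacle will be the careful bookkeeping required to track the identifications of line bundles on $Y_+$ induced by the two embeddings $j_\pm$, which realize $Y_+$ as the minimal resolution of the $A_1$ singular surface with swapped roles of base and fiber coordinates in their respective ambient threefolds; sign conventions between $\iota_\pm^*\cO_\cX(n)$ and the standard $\cO_{Y_+}(n)$ have to be tracked carefully. Once these identifications are pinned down, and once the Koszul extension class is matched with the restricted line bundle on the LHS, the window reduction makes the rest of the argument essentially formal.
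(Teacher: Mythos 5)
Your proposal is correct and follows essentially the same route as the paper: reduce via the window $\cW(0)=\langle\cO,\cO(1)\rangle$ to checking the two line bundles and the morphisms between them, verify $T_{\tilde\cS}(\cO)=\cO$ and $T_{\tilde\cS}(\cO(1))\cong\cO(-1)$ as the cone on the non-split extension, and then match morphisms. The only differences are cosmetic (you compute the object-level facts via Grothendieck duality and the Koszul resolution, and trace the weight-one generators $b_0,a_1$ -- which are semi-invariant rather than invariant functions -- through functoriality, where the paper invokes unique extension of morphisms across the codimension-one $\P^1$), so this is a faithful, slightly more explicit version of the paper's argument.
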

\begin{proof}
This is very similar to the proof of Proposition~\ref{prop.windowisflop}. It's enough to check the statement on the line bundles $\cO$ and $\cO(1)$, and all morphisms between them. It's easy to check that $T_{\tilde{\cS}}$ sends $\cO$ to $\cO$, and $\cO(1)$ to the cone on the non-trivial extension
$$\cone{\cO_{\P^1}(-1) [-1] \,\to \,\cO(1)} $$
which is $\cO(-1)$, as required. The argument for the morphisms is similar to our previous one. (The $\P^1\subset Y_+$ is codimension 1, but all morphisms between these line bundles do extend uniquely.)
\end{proof}

\begin{rem} One can also prove the above proposition by considering the Fourier--Mukai kernel for $\psi_0$ given to us from its geometric description, calculating the derived restriction of this kernel to $Y_+\times Y_+$, and checking that the result agrees with the kernel for the spherical twist. \end{rem}

We may then deduce the following corollary.
\begin{cor} \label{cor.twistinducestwistsquared} The square below commutes.
$$\begin{tikzcd} \Db(X_+) \rar{T_{\cS}} \dar[swap]{j_+^*} & \Db(X_+) \dar{j_+^*} \\
\Db(Y_+) \rar[swap]{(T_{\tilde{\cS}})^2} & \Db(Y_+) \end{tikzcd} $$
\end{cor}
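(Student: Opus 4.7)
The plan is to reduce the corollary to Proposition~\ref{prop.flopinducestwist} by decomposing $T_\cS$ as a window shift. From the discussion in Section~\ref{sect.thethreefolds}, we have $T_\cS = (\psi_{-1})^{-1} \circ \psi_0$, so it suffices to combine the given square for $\psi_0$ with a companion square for $\psi_{-1}$.

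The companion square I would establish is
\[
\begin{tikzcd}
\Db(X_-) \rar{\psi_{-1}^{-1}} \dar[swap]{j_-^*} & \Db(X_+) \dar{j_+^*} \\
\Db(Y_+) \rar[swap]{T_{\tilde\cS}} & \Db(Y_+).
\end{tikzcd}
\]
The proof is a line-by-line replay of that of Proposition~\ref{prop.flopinducestwist}: one replaces the window $\cW(0) = \br{\cO, \cO(1)}$ with $\cW(-1) = \br{\cO(-1), \cO}$, and verifies commutativity on the two line bundle generators and on the morphisms between them, where the Ext computations are essentially the same ``line bundle cone'' calculation as in the proof of Proposition~\ref{prop.flopinducestwist}. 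Alternatively, the square can be read off from Proposition~\ref{prop.flopinducestwist} itself via the $X_+\leftrightarrow X_-$ symmetry of the Atiyah flop, which interchanges the two window equivalences (up to a line bundle twist that commutes past $j_\pm^*$).

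Granted this companion identity, the corollary follows by straightforward composition:
\[
j_+^* \circ T_\cS
= j_+^* \circ (\psi_{-1})^{-1} \circ \psi_0
= T_{\tilde\cS} \circ j_-^* \circ \psi_0
= T_{\tilde\cS} \circ T_{\tilde\cS} \circ j_+^*
= (T_{\tilde\cS})^2 \circ j_+^*,
\]
using the companion square in the second equality and Proposition~\ref{prop.flopinducestwist} in the third.

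The main obstacle is confirming that the companion square carries $T_{\tilde\cS}$ (rather than the identity, or its inverse) on its lower edge. Heuristically this is pleasing: it matches the physics picture that a small loop around the conifold in $\cF_\cX$ projects to a loop of winding number two around the conifold in $\cF_\cY$, with the two window equivalences $\psi_0$ and $\psi_{-1}^{-1}$ each contributing a single factor of $T_{\tilde\cS}$. Mathematically the verification comes down to a careful comparison of the two inclusions $\iota_+ \circ j_+$ and $\iota_- \circ j_-$ of $Y_+$ into $\cX$, and in particular to tracking how the ambient line bundles $\cO_\cX(k)$ restrict to $Y_+$ through each of them.
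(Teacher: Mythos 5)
Your proposal is correct and is essentially the deduction the paper intends: you decompose $T_\cS$ as the window shift $(\psi_{-1})^{-1}\circ\psi_0$ and apply Proposition~\ref{prop.flopinducestwist} twice, once as stated and once in the mirrored form $j_+^*\circ(\psi_{-1})^{-1}\cong T_{\tilde\cS}\circ j_-^*$, which is exactly what one gets by replaying that proof with the window $\cW(-1)$ (equivalently, via the $a_i\leftrightarrow b_i$ symmetry, which carries the $\cW(0)$-equivalence $\Db(X_+)\to\Db(X_-)$ precisely to $(\psi_{-1})^{-1}$, with no extra line bundle twist needed). Your flagged worry about whether the lower edge is $T_{\tilde\cS}$ rather than the identity or its inverse is settled by the generator check: $(\psi_{-1})^{-1}$ sends $\iota_-^*\cO(-1)=\cO_{X_-}(1)$ to $\cO_{X_+}(-1)$, so $j_+^*$ of it is $\cO_{Y_+}(-1)$, matching $T_{\tilde\cS}(j_-^*\cO_{X_-}(1))=T_{\tilde\cS}(\cO_{Y_+}(1))=\cO_{Y_+}(-1)$.
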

This fact can also be deduced from the results in \cite{HuyThom}. It corresponds to the fact that a loop around the conifold point in $\cF_\cX$ projects to a \textit{double} loop around the conifold point in $\cF_\cY$.

\begin{rem}\label{rem.SKMSvsFIPS}
It's important not to confuse the FI parameter space with the `stringy K\"ahler moduli space' (SKMS). Under renormalization, a GLSM is believed to flow to a (super)conformal field theory.\footnote{If the associated GIT quotients are non-compact then this statement is problematic, and this is true in the examples that we care about. But we'll skip over this point.} The SKMS is a particular slice through the moduli space of this conformal field theory, obtained by only varying the K\"ahler parameters of the theory. If we assume the Calabi--Yau condition, varying the FI parameter corresponds, after renormalization, to a variation of the K\"ahler parameters, and so there should be a map
$$\cF \longrightarrow \mbox{SKMS}. $$
For the example $\cY$, it is believed that this map is an isomorphism, so $\cF_\cY$ is exactly the K\"ahler moduli space of the associated CFT.

 However, this cannot be true in the $3$-fold example $\cX$. The two phases $X_+$ and $X_-$ are isomorphic and so produce the same CFT, therefore $\cF_\cX$ certainly does not inject into the moduli space of CFTs. For this example, it is believed that the true SKMS is in fact $\cF_\cY$, so the FI parameter space is actually a double cover of the true SKMS.\footnote{In these two examples the map from $\cF$ to the K\"ahler moduli space is at least a local isomorphism, but if we add superpotentials to our GLSMs then it's easy to find examples where this fails.}

  It should be possible to produce derived autoequivalences of the phases $X_\pm$ by performing parallel transport of B-branes over the SKMS, not just over the FI parameter space, so the autoequivalence group that we are seeing is actually an index~2 subgroup of a larger possible group. Concretely, this means that the autoequivalence $T_\cS$ of $\Db(X_+)$ should have a square root. This is indeed true, and the required autoequivalence can be produced by composing $F$ with pullback over the obvious isomorphism between $X_+$ and $X_-$. There is also a beautiful construction of Hori \cite[Section 2.4]{Hori11} that produces it using a different GLSM (equipped with a superpotential). Note however that although $T_\cS$ is compactly supported (i.e. it is trivial away from a compact subvariety), the square root of $T_\cS$ is not.
\end{rem}

\subsection{Statement of results}
\label{section.statement}

In this section we'll explain the class of examples that we're going to consider, and the results that we obtain.

\subsubsection{Construction}
\label{section.construction}

Fix an integer $k\geq 1$. Consider the quiver obtained by taking the affine Dynkin diagram of type $A_k$, and replacing each edge with a pair of arrows, one in each direction (see Figure~\ref{fig.quivers}). We'll label the clockwise arrows by $a_0,\ldots,a_k$, and the anti-clockwise arrows by $b_0,\ldots,b_k$. Now consider the Artin  stack $\quotstack{V}{T}$ parametrizing representations of this quiver which have dimension 1 at each vertex. More explicitly, we let

\begin{itemize}
\item $V = \C^{2(k+1)}$ be the vector space (whose dual is) spanned by the arrows, so that it has co-ordinates $a_0,\ldots,a_k, b_0,\ldots,b_k$, and 
\item $T =(\C^*)^{k+1}$ be the torus with one $\C^*$ factor for each vertex.
\end{itemize}
There's an obvious action of $T$ on $V$, by letting the $\C^*$ associated to the $i^{\text{th}}$ vertex act with
\begin{itemize}
\item weight $+1$ on the two incoming arrows $a_{i-1}, b_i$, and 
\item weight $-1$ on the two outgoing arrows $a_i, b_{i-1}$,
\end{itemize}
reading indices modulo $k+1$. Note that the diagonal $\C^*$ acts trivially, and also that the whole torus acts with trivial determinant on $V$.

\begin{figure}[h]
                  \begin{minipage}[b]{.3\linewidth}
                    \centering
\begin{tikzpicture}[>=stealth,scale=2,node distance=6]
\quiver{->}{->}{->}{->}{->}{->}{0}{0}{basic} ;
 \node[above of=basicV1] {\scriptsize$1$} ;
 \node[above of=basicV2] {\scriptsize$2$} ;
 \node[below of=basicV0] {\scriptsize$0$} ;
\end{tikzpicture}
\vspace{0.3cm}
\subcaption{$A_2$ quiver}\label{fig.quiver2}
                  \end{minipage}%
                 \begin{minipage}[b]{.7\linewidth}
                    \centering
\begin{tikzpicture}[>=stealth,scale=2.5,node distance=7]
\foreach \i in {1,...,5} {
	\node(V\i) at (\i,0) [vertex] {};
	\node[above of=V\i] {\scriptsize$\i$};
	} ;
\node(V0) at (3,-1) [vertex] {};
\node[below of=V0] {\scriptsize$0$};
\foreach \i/\j/\bshift/\ashift/\bpos/\apos/\bend in {
	5/4/.5/-.5///10,
	4/3/.5/-.5///13,
	3/2/.5/-.5///13,
	2/1/.5/-.5///10,
	1/0/-.5/.5///7,
	0/5/-.5/.5///7} {
	\draw[->,bend left=\bend,looseness=1] (V\i) to node[gap, \bpos ,pos=0.5]{\scriptsize$b_\j$} (V\j);
	\draw[->,bend left=\bend,looseness=1] (V\j) to node[gap, \apos ,pos=0.5]{\scriptsize$a_\j$} (V\i);
	} ;

\end{tikzpicture}
\subcaption{$A_5$ quiver}\label{fig.quiver5}
                  \end{minipage}
                  \caption{Quivers associated to affine Dynkin diagrams.} \label{fig.quivers}
                \end{figure}
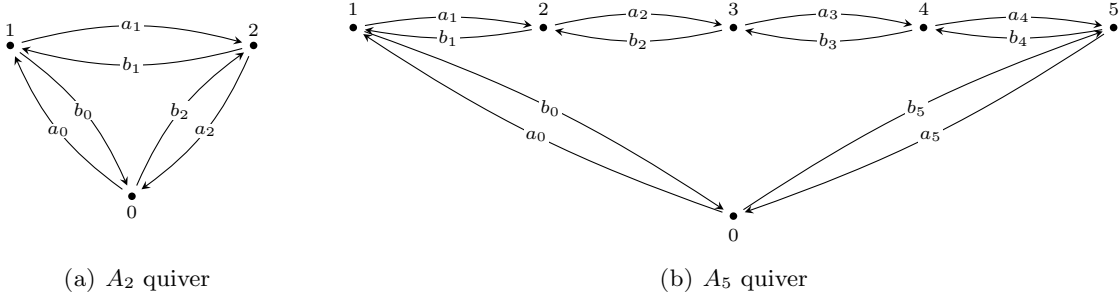

The $T$-invariant functions on $V$ are generated by the monomials
$$ u = a_0a_1\ldots a_k, \hspace{1cm} v = b_0b_1\ldots b_k $$
and 
$$ z_i = a_ib_i, \;\; \mbox{for } i\in [0,k]. $$
So the affine quotient $V/T$ is the singularity
$$uv = z_0z_1\ldots z_k $$
which is the universal unfolding of the $A_k$ surface singularity.

If we choose a character $\theta$ of $T$, we can take a GIT quotient 
$$X = V \sslash_\theta T.$$
For $X$ to be non-empty, we need to choose a $\theta$ that annihilates the diagonal $\C^*$. Then, for a generic such $\theta$, the quotient $X$ is a smooth toric variety resolving the singularity $V/T$. It's also Calabi--Yau, since $T$ acts through $\operatorname{SL}(V)$, and as we shall see later it's independent of $\theta$, i.e. all the phases are isomorphic.

\pgap

This construction is well-known in the context of Nakajima quiver varieties. In that approach, one equips $V$ with a $T$-invariant symplectic form by making $a_i$ and $b_i$ symplectically dual, then takes a hyperk\"ahler quotient $Y$. As a complex variety, $Y$ is a subvariety of $X$ formed by taking level sets of all the `complex moment maps', which are the invariant functions
\begin{equation}\label{eqn.moment_maps}\mu_i = z_i - z_{i-1} \end{equation}
for $1\leq i \leq k $. Using these functions, we can view $X$ as a family
$$ X \stackrel{\mu}{\longrightarrow} \C^k. $$
The fibres are all smooth surfaces: these are the (underlying varieties of the) possible hyperk\"ahler quotients $Y$. The fibre over zero $Y_0$ is the very famous  small resolution of the $A_k$ surface singularity
$$uv = z^{k+1},$$
which appears in the McKay correspondence and Kronheimer's ALE classification. The larger space $X$ is a versal deformation of $Y_0$. Note that it may also be obtained as the inverse image of a Slodowy slice under the Grothendieck--Springer resolution \cite{Slodowy}.

\pgap

 In the case $k=1$, we get the $3$-fold $X$ and the surface $(Y_0=) Y_+ \subset X$  that we discussed in Section~\ref{sect.A1case}.  For higher $k$, we are also interested in some intermediate subvarieties lying between $Y_0$ and $X$. These intermediate subvarieties are indexed by partitions, as we will now describe.

Firstly, note that for any $i,j\in[0,k]$ the invariant function $z_i - z_j$ lies in the space $\C^k$ spanned by the complex moment maps. Now let $\Gamma$ be a partition of the set
$$\set{z_0,\ldots,z_k}.$$
There is a corresponding subspace of  $\C^k$, where we include the function $z_i-z_j$ if and only if the variables $z_i$ and $z_j$ lie in the same part of $\Gamma$. We let
$$X_\Gamma\subset X$$
be the subvariety defined as the vanishing locus of the subspace of the complex moment maps corresponding to a partition $\Gamma$.

Partitions form a poset, ordered by refinement, and obviously we have 
$$X_\Gamma \subset X_{\Gamma'} $$
if $\Gamma'$ is a refinement of $\Gamma$. We'll let $\Gamma_{\fin}$ denote the finest possible partition, which has $(k+1)$ parts, then $X_{\Gamma_{\fin}}$ is the ambient space $X$. At the other extreme, the coarsest possible partition $\Gamma_{\crs}$, which has only one part, corresponds to the surface $X_{\Gamma_{\crs}}= Y_0$. 

\subsubsection{Physical heuristics}
\label{sect.heuristics}
 
 As we will justify in Section~\ref{sect.toriccalculations}, each of these varieties $X_\Gamma$ is a smooth toric Calabi--Yau, which arises as the GIT quotient of a vector space by a torus. As such, we can view each one as a phase of an abelian GLSM, and so we can run the physicists' recipe and compute the FI parameter space $\cF_\Gamma$ for each one. We will do this in Section~\ref{sect.FIps}, and the results are as follows.

 For the ambient space $X=X_{\Gamma_{\fin}}$, the FI parameter space is the set
$$\cF_{\fin} = \setconds{(\zeta_0\!:\!\ldots\!:\!\zeta_k)}{\begin{aligned} &\zeta_i\neq 0 \; \forall i \\[-5pt] &\zeta_i\neq \zeta_j\; \forall i\neq j \end{aligned}} \;\subset\; \P^k $$
of $(k+1)$-tuples of distinct non-zero complex numbers, up to overall scale. Now take a partition $\Gamma$, and let 
$$S_\Gamma \subset S_{k+1}$$
be the Young subgroup of permutations that preserve $\Gamma$. As we shall show, the FI parameter space associated to the subvariety $X_\Gamma \subset X$ is the orbifold
$$\cF_\Gamma = \quotstack{\cF_{\fin}}{S_\Gamma} $$
using the obvious action of $S_{k+1}$ on $\cF_{\fin}$. So the FI parameter spaces also show the poset structure, since we have a covering map
$$\cF_{\Gamma'} \to \cF_\Gamma$$
whenever $\Gamma'$ is a refinement of $\Gamma$.

  The FI parameter space associated to the surface $Y_0$ is $\cF_{\crs} = \quotstack{\cF_{\fin}}{S_{k+1}}$, which is a $\C^*$ quotient of the configuration space of $k+1$ points in $\C^*$. The fundamental group of this space is generated by two subgroups: a lattice, generated by letting each $\zeta_i$ loop around zero, and a copy of the braid group $B_{k+1}$. For our purposes the lattice is not very interesting, and we'll focus on the braid group. The heuristic picture of brane transport over the FI parameter space predicts that there should be an action
$$B_{k+1} \curvearrowright \Db(Y_0).$$
This braid group action was famously constructed by Seidel--Thomas  \cite{ST}. 

Now choose a general partition $\Gamma$. It's clear that (the interesting part of) the fundamental group of $\cF_\Gamma$ is the group $B_\Gamma$, defined as the fibre product
\begin{equation}\label{eqn.mixed_braid_defn}\begin{tikzcd}B_\Gamma \rar[hook] \dar & B_{k+1} \dar \\
S_\Gamma \rar[hook] & S_{k+1} \end{tikzcd}\end{equation}
where $S_\Gamma$ is the Young subgroup of elements of $S_{k+1}$ preserving the partition $\Gamma$. This group $B_\Gamma$ is sometimes called a `mixed braid group': it consists of braids that permute their endpoints in a way that preserves the partition $\Gamma$. The special case $\Gamma=\Gamma_{\fin}$, and so $S_\Gamma=\{1\}$, produces the `pure braid group' $P_{k+1}$. 

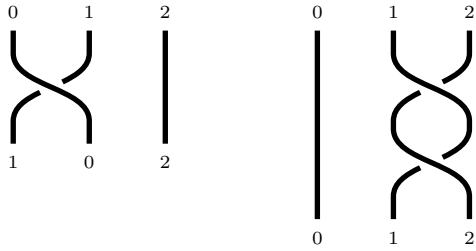
\begin{figure}[H]
\begin{tikzpicture}
	\braid[number of strands=3,line width=2pt] (B1) at (-2,0) s_1;
	\node[at=(B1-1-s), yshift=7pt] {\scriptsize 0};
          \node[at=(B1-2-s),  yshift=7pt] {\scriptsize 1};
	\node[at=(B1-3-s),  yshift=7pt] {\scriptsize 2};
	\node[at=(B1-1-e), yshift=-7pt] {\scriptsize 0};
          \node[at=(B1-2-e),  yshift=-7pt] {\scriptsize 1};
	\node[at=(B1-3-e),  yshift=-7pt] {\scriptsize 2};

	\braid[number of strands=3,line width=2pt] (B2) at (+2,0) s_2 s_2;
	\node[at=(B2-1-s), yshift=7pt] {\scriptsize 0};
          \node[at=(B2-2-s),  yshift=7pt] {\scriptsize 1};
	\node[at=(B2-3-s),  yshift=7pt] {\scriptsize 2};
	\node[at=(B2-1-e), yshift=-7pt] {\scriptsize 0};
          \node[at=(B2-2-e),  yshift=-7pt] {\scriptsize 1};
	\node[at=(B2-3-e),  yshift=-7pt] {\scriptsize 2};
\end{tikzpicture}
\caption{Elements of the mixed braid group $B_\Gamma$ for the partition $\Gamma = (0 1)(2)$.}
\label{fig.mixedbraid}
\end{figure}

The FI parameter space heuristics suggest the following result, which we shall prove:
\begin{thm}[Corollary~\ref{cor.maincor}]\label{thm.mainthm_in_intro_section} There is an action of the mixed braid group $B_\Gamma$ on the derived category $\Db(X_\Gamma)$.\end{thm}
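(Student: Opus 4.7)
The plan is to realize the action via a groupoid $T_\Gamma$ acting on the collection of GIT phases of $X_\Gamma$, whose isotropy at any vertex is $B_\Gamma$. First, I would express each phase of $X_\Gamma$ as a GIT quotient of a vector space by a torus acting with trivial determinant, and check that all chambers yield the same variety up to isomorphism (so that derived equivalences between phases will restrict to autoequivalences of a single $\Db(X_\Gamma)$). For each codimension-one wall separating adjacent chambers I would then construct a family of window subcategories $\cW \subset \Db([V/T])$, each generated by an appropriate collection of line bundles, for which both restriction functors $\iota_\pm^* \colon \cW \to \Db(X_\pm)$ are equivalences. This yields a countable family of wall-crossing equivalences $\psi$ between adjacent phases, directly generalizing \eqref{eqn.3fold_window_equiv}.

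Next, I would assemble these into the groupoid $T_\Gamma$ whose objects are the phases, with morphisms generated by wall-crossings $\psi$ and by tensoring with line bundles in each phase. The substance of the proof is to verify the defining relations: commutativity of line-bundle tensoring with wall-crossings (accounting for windows that differ by a line-bundle shift), and braid-type relations at codimension-two loci, where a triangle of three adjacent phases must satisfy a two-move $=$ three-move identity mirroring the braid relation $t_i t_{i+1} t_i = t_{i+1} t_i t_{i+1}$. The verification technique is the one used in the proof of Proposition~\ref{prop.windowisflop}: check each relation directly on a generating set of line bundles in a chosen window (where the autoequivalence is determined by tensoring with an explicit line bundle on the common roof) and on morphisms between them (which come from global functions on $V$ and extend uniquely across the codimension-$\geq 2$ loci where the phases disagree). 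Once these relations are verified, $T_\Gamma$ has the required presentation, and its isotropy group at any vertex is isomorphic to $B_\Gamma$ by the description of $\pi_1(\cF_\Gamma)$ obtained in Section~\ref{sect.FIps}.

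I expect the main obstacle to be the combinatorial bookkeeping of chambers, walls, and windows for a general partition $\Gamma$: one needs a clean generators-and-relations presentation of $T_\Gamma$ that matches the presentation of $B_\Gamma$ coming from the hyperplane arrangement structure of $\cF_\Gamma$. For faithfulness, I would use functoriality under refinement of partitions: the restriction functor $j_\Gamma^* \colon \Db(X_{\Gamma'}) \to \Db(X_\Gamma)$ for $\Gamma$ coarser than $\Gamma'$ should intertwine the two actions, generalizing Proposition~\ref{prop.flopinducestwist} and Corollary~\ref{cor.twistinducestwistsquared}. Specializing to the coarsest partition $\Gamma_{\crs}$ reduces the faithfulness claim to that of the Seidel--Thomas action on $\Db(Y_0)$, which is proved in \cite{ST}, and since $B_\Gamma \hookrightarrow B_{k+1}$ compatibly with this restriction, faithfulness for general $\Gamma$ follows.
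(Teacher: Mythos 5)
Your overall architecture (windows generated by line bundles, a groupoid of phases whose isotropy is $B_\Gamma$, and faithfulness by restricting to $\Gamma_{\crs}$ and invoking \cite{ST}) matches the paper, but the core step --- verifying the braid relations --- has a genuine gap. First, your opening premise is false for general $\Gamma$: not all chambers of the GIT problem $\tilde{V}\sslash\tilde{T}$ give varieties isomorphic to $X_\Gamma$. As soon as two indices $i,j$ lie in the same part of $\Gamma$, the chamber across the wall $\tilde\theta(\tau_i)=0$ is an \emph{orbifold} phase (a family of $\Z_2$-points replacing the family of $(-2)$-curves, cf.\ Section~\ref{sect.familiesofrationalcurves}), and the corresponding braid generator is not a single wall-crossing between isomorphic phases but a loop, realized as a window shift $(\psi_{-1})^{-1}\circ\psi_0$, i.e.\ a family spherical twist. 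Second, and more seriously, your proposed verification technique --- check each relation on a generating set of line bundles, with the equivalences ``determined by tensoring with an explicit line bundle on the common roof,'' as in Proposition~\ref{prop.windowisflop} --- breaks down exactly in the hardest case, when all the strands involved lie in one part of $\Gamma$. There the relation to be proved is a genuine braid relation between family spherical twists on a \emph{fixed} variety (for $\Gamma_{\crs}$ it is precisely the Seidel--Thomas relation on the resolved surface), there is no birational roof to tensor along, and the two sides of the relation do not live in a single wall-crossing window; exhibiting one category that simultaneously realizes all the relevant windows is precisely the delicate point (cf.\ Remark~\ref{rem.delicate_features}), and you have not supplied it for the subvariety GIT problems.

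What the paper does instead, and what your plan is missing, is to prove the relations \emph{only} on the ambient versal space $X_{\Gamma_{\fin}}$ (reducing to $k=2$, Proposition~\ref{prop.hexagoncommutes}), where a single subcategory $\cV\subset\Db(\cX)$ generated by three explicit line bundles restricts to every one of the six wall-crossing windows in the hexagon (Lemma~\ref{lem.Vexists}); the relation then holds by pure formality. The relations for every coarser $\Gamma$ are then \emph{transferred} down, not re-proved: all the equivalences are Fourier--Mukai kernels relative to the fibration $\mu\colon X_\Gamma^\sigma\to\C^s$, and derived restriction of kernels along $\C^{\tilde s}\hookrightarrow\C^s$ is functorial and sends the generators for $\Gamma_{\fin}$ to the generators for $\tilde\Gamma$ (Proposition~\ref{prop.groupoid_action_intertwinement}); in particular the spherical-twist braid relations on $Y_0$ come for free from the flop relations upstairs. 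The intertwinement you reserve for faithfulness is in fact the engine that makes the relations hold at all; without it (or some substitute, such as an analogue of $\cV$ for each subvariety GIT problem, which is not known to be generated by line bundles), your direct check amounts to reproving the Seidel--Thomas relations by hand and is not carried by the Proposition~\ref{prop.windowisflop}-style argument you cite.
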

In the surface case, Seidel--Thomas proved the rather deep result that the action is faithful. We can leverage their result to prove that all our actions are also faithful.

 Note that in the $k=1$ case  (Section~\ref{sect.A1case}), we did indeed see the above structure, but there wasn't much to prove since both $B_2$ and $P_2$ are isomorphic to $\Z$. The action of $\Z$ on both $\Db(X_+)$ and $\Db(Y_+)$ was generated in each case by a spherical twist. The non-trivial fact was how these two actions were related to each other: we saw (Corollary~\ref{cor.twistinducestwistsquared}) that the action of $P_2$ on $\Db(X_+)$ intertwined, via restriction, with the action of the subgroup $P_2\subset B_2$ on $\Db(Y_+)$. Heuristically, this was a reflection of the fact that the FI parameter space for $X_+$ was a double cover of the FI parameter space for $Y_+$.

 For general $k$, similar FI parameter space heuristics predict that our mixed braid group actions should be related to each other, by the same poset structure. Precisely, we should expect that  if $\Gamma'$ is a refinement of $\Gamma$, then the action of $B_{\Gamma'}$ on $\Db(X_{\Gamma'})$ will intertwine via the restriction functor
$$\Db(X_{\Gamma'}) \to \Db(X_\Gamma)$$
 with the action of the subgroup $B_{\Gamma'}\subset B_{\Gamma}$ on $\Db(X_\Gamma)$. In the course of our proof we will show that this prediction does indeed hold (Proposition~\ref{prop.groupoid_action_intertwinement}).

\begin{rem} As we explained in Remark~\ref{rem.SKMSvsFIPS}, the FI parameter spaces are not the true K\"ahler moduli spaces for these theories, indeed the true SKMS appears to be always given by $\cF_{\crs}$ (for a fixed value of $k$). On the level of derived categories, this means that our mixed braid group actions could be extended to an action of the full braid group, by including square roots of the relevant spherical twists.
\end{rem}

\subsection{Future directions}

In principle, one can carry out this program for any example of a torus acting on a vector space. The main obstacle appears to be finding a meaningful description of the group $\pi_1(\cF)$. 

A first guess for a good generalization of our examples is to follow the standard technique for Nakajima quiver varieties, and replace the affine $A_k$ Dynkin diagram by some other graph. Unfortunately, as soon as the graph has vertices with valency greater than $2$, the obvious subvarieties of the representation space (the analogues of our $X_\Gamma$'s) will not be toric, and so we won't get the whole poset of GLSMs and covering maps that we see in our examples.

 In fact this covering map phenomenon, and its relationship with derived categories, is probably the most interesting feature of our construction. It would be worthwhile to investigate the general conditions under which it arises.

\section{Toric calculations}\label{sect.toriccalculations}

In this section we'll apply some completely standard toric techniques to understand our varieties~$X_\Gamma$.

\subsection{Representations of the free quiver}\label{sect.reps_free_quiver}

Our first task is to find the toric fan for the ambient variety $X$. As an initial step, we can package our construction as an exact sequence of lattices as follows:

\begin{center}
\begin{tikzcd}[column sep=25pt, row sep=15pt] \Z \ar{r}{\phantom{1}\left(\begin{smallmatrix}1\pgfmatrixnextcell\cdots\pgfmatrixnextcell1\end{smallmatrix}\right)\phantom{1}} & \Z^{k+1} \ar{r}{Q} & \Z^{2(k+1)} \ar{r}{P} & \Z^{2+(k+1)} \ar{r}{\left(\begin{smallmatrix}+1\\+1\\-1\\[-3pt]
\scriptsize{\tvdots}\\[-3pt]
-1 \end{smallmatrix}\right)} & \Z \\ & \Hom(\C^*,T) \arrow[-,transform canvas={xshift=+\sepForEquals}]{u} \arrow[-,transform canvas={xshift=-\sepForEquals}]{u} & \text{\small basis vectors $b_i$, $a_i$} \ar[squiggly]{u} & \text{\small co-ordinates $u$, $v$, $z_i$} \ar[squiggly]{u} \end{tikzcd}
\end{center}

\begin{figure}[h]
\begin{center}
\newlength{\mycolwdC}
\settowidth{\mycolwdC}{$\scriptstyle{a_{k-1}}$}
\BAnewcolumntype{C}{>{\centering$}p{\mycolwdC}<{$}}
$$Q\quad=\quad\begin{blockarray}{CCCCCCCCCCCcc}\scriptstyle{b_0}&\scriptstyle{a_0}&\scriptstyle{b_1}&\scriptstyle{a_1}&\scriptstyle{b_2}&\scriptstyle{a_2}&\scriptstyle{\cdots}&\scriptstyle{b_{k-1}}&\scriptstyle{a_{k-1}}&\scriptstyle{b_k}&\scriptstyle{a_k}& \\[1ex]
 \begin{block}{(CCCCCCCCCCC)cc} -1&+1&+1&-1&0&0&\cdots&0&0&0&0 &&\scriptstyle{1} \\ 0&0&-1&+1&+1& -1&  &0&0&0&0 &&\scriptstyle{2}\\ \vdots&&&&&&&&&&\vdots&&\scriptstyle{\vdots}\\0&0&0&0&0&0&  &-1&+1&+1&-1 &&\scriptstyle{k}\\+1&-1&0&0&0&0&\cdots&0&0&-1&+1 &&\scriptstyle{0}\\ \end{block} \end{blockarray}$$
$$
\newlength{\mycolwdD}
\settowidth{\mycolwdD}{$\scriptstyle{z_2}$}
\BAnewcolumntype{D}{>{\centering$}p{\mycolwdD}<{$}}
\def\hspacing{10pt}
P\quad=\quad\begin{blockarray}{D@{\hspace{\hspacing}}D@{\hspace{\hspacing}}D@{\hspace{\hspacing}}D@{\hspace{\hspacing}}D@{\hspace{\hspacing}}c@{\hspace{\hspacing}}ccc}\scriptstyle{u}&\scriptstyle{v}&\scriptstyle{z_0}&\scriptstyle{z_1}&\scriptstyle{z_2}&\scriptstyle{\cdots}&&\\[1ex]
\begin{block}{(D@{\hspace{\hspacing}}D@{\hspace{\hspacing}}D@{\hspace{\hspacing}}D@{\hspace{\hspacing}}D@{\hspace{\hspacing}}c@{\hspace{\hspacing}}c)cc}
0&1&1&0&0&\cdots&&&\scriptstyle{b_0}\\
1&0&1&0&0&&&&\scriptstyle{a_0}\\
0&1&0&1&0&&&&\scriptstyle{b_1}\\
1&0&0&1&0&&&&\scriptstyle{a_1}\\
0&1&0&0&1&&&&\scriptstyle{b_2}\\
1&0&0&0&1&&&&\scriptstyle{a_2}\\
\vdots&&&&&&&&\scriptstyle{\vdots}\\
&\\\end{block}\end{blockarray}$$
\end{center}
\caption{Toric data for GIT quotient $X$.}\label{fig.toric}
\end{figure}

The matrices for the maps $Q$ and $P$ are given in Figure~\ref{fig.toric}. The $\Z$ at the left of the sequence corresponds to the diagonal $\C^*\subset T$, which acts trivially on $V$. The toric fan for $X$ lies in the rank $k+2$ lattice $\Im P\subset \Z^{2+(k+1)}$. However, we will often view it as a fan in $ \Z^{2+(k+1)}$, since the larger lattice has a convenient system of co-ordinates.

 For any choice of $\theta$ we know the rays in the toric fan immediately: they're generated by the images of the standard generators for $\Z^{2(k+1)}$ (i.e. the rows of $P$). Consider the generators corresponding to the arrows $a_0,\ldots,a_k$, and look at their images under $P$. This gives a set of vectors $\polyTopVec{0},\ldots,\polyTopVec{k}$ (i.e. the $a$-indexed rows of $P$) which form the vertices of a standard $k$-simplex in the affine subspace $u=1, v=0$. The other half of the generators give a set of vectors $\polyBaseVec{0},\ldots,\polyBaseVec{k}$ (i.e. the $b$-indexed rows of $P$) which span a standard $k$-simplex in $u=0, v=1$. So all the generators together span a polytope
 $$\Pi \cong \Delta^k\times I.$$
 
 \begin{rem}Notice that $\Pi$ lies in an affine hyperplane of height 1, namely $\{z_0+\ldots+z_k=1\}$: this is equivalent to the Calabi--Yau condition.\end{rem}

Now choose a character $\theta=(\theta_0, \ldots,\theta_k) \in (\Z^{k+1})^\vee$ 	of the torus $T$ which annihilates the diagonal $\C^*$, and lift it to an element $\vartheta\in (\Z^{2(k+1)})^\vee$. We can choose $\vartheta$ to be of the form
\begin{equation}\label{thetaform}\vartheta = (0,\vartheta_0 , 0, \vartheta_1, 0, \vartheta_2, \ldots, 0,\vartheta_k).\end{equation}
Then 
$$\theta_0=\vartheta_k - \vartheta_0, \;\;\;\; \theta_1 = \vartheta_0 - \vartheta_1, \;\;\;\; \ldots,\;\;\;\; \theta_k = \vartheta_{k-1} - \vartheta_k, $$
and so these values $\vartheta_i$ are unique up to adding an overall constant. Choose a character such that
\begin{equation}\label{standardtheta}\vartheta_0>\vartheta_1>\ldots>\vartheta_k \end{equation}
i.e. all the $\theta_i$ are positive for $i\geq 1$. We'll refer to the corresponding quotient $X$ as the \textit{standard phase}. 

The toric fan for $X$ is the set of cones on some subdivision of the polytope $\Pi$. To find out which subdivision it is, we use a standard shortcut from toric geometry \cite[Section 3.4]{fult}. View $\vartheta$ as an integer-valued function on the vertices of $\Pi$. After subdividing $\Pi$ according to the toric fan for $X$, this function $\vartheta$ will extend to a strictly concave piecewise linear (PL) function over $\Pi$.
 
This shortcut lets us guess the answer. There is a standard way to triangulate $\Pi =\Delta^k\times I$ into $(k+1)$-simplices, by cutting it into the pieces
\begin{equation}\label{Deltai} \Delta_i = \simplex{\polyTopVec{0}, \ldots, \polyTopVec{i}, \polyBaseVec{i}, \ldots,\polyBaseVec{k}}, \end{equation}
where $0\leq i\leq k $. Our standard $\vartheta$ gives a strictly concave PL function on this subdivision, and this is the only subdivision for which this is true, therefore this subdivision gives the correct fan for the standard phase. In summary:
\begin{lem}
The toric fan for the standard phase  $X$ is given by all the cones on the simplices $\Delta_i$.
\end{lem}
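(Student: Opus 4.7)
The plan is to invoke the standard toric-geometry dictionary mentioned in the excerpt: the fan of a GIT quotient for a character $\theta$ is encoded by the unique regular subdivision of the polytope $\Pi$ on which the lifted character $\vartheta$ extends to a strictly concave piecewise-linear function. So it suffices to verify that the proposed $\{\Delta_i\}_{i=0}^k$ is a genuine triangulation of $\Pi$ and that $\vartheta$ admits a strictly concave PL extension across it; uniqueness of regular subdivisions then forces this to be the fan of the standard phase.

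First, I would confirm that each $\Delta_i = [\alpha_0, \ldots, \alpha_i, \beta_i, \ldots, \beta_k]$ is a non-degenerate $(k+1)$-simplex and that together they form a triangulation of $\Pi$. Using the coordinates from the rows of $P$, we have $\alpha_j = e_u + e_{z_j}$ and $\beta_j = e_v + e_{z_j}$. The $i+1$ points $\alpha_0, \ldots, \alpha_i$ lie in the affine hyperplane $\{u=1,\,v=0\}$ and are affinely independent there (distinct $z_j$-coordinates); the $k-i+1$ points $\beta_i, \ldots, \beta_k$ are likewise affinely independent in the parallel hyperplane $\{u=0,\,v=1\}$. Since the two hyperplanes differ, the union of all $k+2$ vertices is affinely independent, giving a full-dimensional simplex. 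This is the well-known ``staircase'' triangulation of the prism $\Delta^k\times I$: $\Delta_i$ and $\Delta_{i+1}$ share the codimension-one face spanned by $\alpha_0, \ldots, \alpha_i, \beta_{i+1}, \ldots, \beta_k$, and the union of the $\Delta_i$ covers $\Pi$.

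Second, I would extend $\vartheta$ linearly over each $\Delta_i$ (this agrees on shared faces, hence defines a continuous PL function on $\Pi$) and check strict concavity across each interior wall. The key elementary computation is the affine relation
\[
\alpha_{i+1} \;=\; \alpha_i - \beta_i + \beta_{i+1},
\]
which is immediate from the coordinate description above. Hence the linear extension of $\vartheta|_{\Delta_i}$ evaluated at $\alpha_{i+1}$ gives $\vartheta_i - 0 + 0 = \vartheta_i$, while the true value there is $\vartheta_{i+1}$. Strict concavity at the wall $\Delta_i\cap\Delta_{i+1}$ is therefore equivalent to $\vartheta_{i+1} < \vartheta_i$, which is precisely the standing hypothesis $\vartheta_0 > \vartheta_1 > \cdots > \vartheta_k$ defining the standard phase. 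Since these are the only interior walls of the triangulation, concavity holds everywhere.

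By the toric dictionary, the regular subdivision induced by $\vartheta$ is uniquely characterised by the existence of such a strictly concave PL extension, so the triangulation $\{\Delta_i\}$ must coincide with the fan of $X$. The main ``obstacle'' is just bookkeeping: fixing the right affine relation and interpreting the concavity condition combinatorially; nothing is genuinely deep once the standard prism triangulation is in hand.
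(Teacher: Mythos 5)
Your proof follows the same route as the paper's: both invoke the standard toric dictionary (Fulton, Section 3.4) that the fan of the standard phase is the regular subdivision of $\Pi$ on which the lifted character $\vartheta$ extends to a strictly concave PL function, and both identify this subdivision with the staircase triangulation $\{\Delta_i\}$ of the prism using precisely the chamber inequalities $\vartheta_0 > \vartheta_1 > \cdots > \vartheta_k$. Your explicit wall-by-wall concavity check via the affine relation $\alpha_{i+1} = \alpha_i - \beta_i + \beta_{i+1}$ simply fills in computational details that the paper leaves implicit, so the two arguments are essentially identical.
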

\begin{rem}\label{rem.cone} For future reference in Section~\ref{sect.subvarieties}, note that the cone on the polytope $\Pi$ (i.e. the union of all the cones in the fan) is just the intersection of the positive orthant 
$$\set{u,v,z_0,\ldots,z_k\geq 0}\subset \Z^{2+(k+1)}$$
 with the hyperplane
\begin{equation}\label{hyperplaneImP} u + v = z_0 + \ldots + z_k  \end{equation} 
spanned by the lattice $\Im P$. To get the cone on the simplex $\Delta_i \subset \Pi$, we additionally impose the inequalities
\begin{align}\label{inequalitiesforDeltai}
u\geq z_0+\ldots+z_{i-1}, \hspace{1cm} v\geq  z_{i+1} +\ldots+z_k.
\end{align}
\end{rem}

If we vary the character $\vartheta$, then the above argument shows that the GIT quotient does not change as long as we remain in the region \eqref{standardtheta}, and conversely as soon as two of the $\vartheta_i$ become equal then we hit a wall. Consequently the chambers for the GIT problem are the regions
$$\vartheta_{\sigma(0)}>\vartheta_{\sigma(1)}>\ldots > \vartheta_{\sigma(k)} $$
for some permutation $\sigma\in S_{k+1}$. We'll denote the corresponding quotients by $X^{\sigma}$, but we'll continue to let $X=X^{(1)}$ denote the standard phase.

To get the fan for a non-standard phase $X^{\sigma}$, we just re-order the vertices of $\Pi$ by $\sigma$ before performing the standard triangulation. Notice that all the phases are in fact isomorphic, because of this $S_{k+1}$ symmetry (i.e. the Weyl group). This is not immediately obvious from the original quiver description.

\begin{eg}\label{eg.3foldfan}
{(1)}
 Let $k=1$. The  polytope $\Pi$ lies in the affine subspace
$$\{u + v= z_0 + z_1, \;\;\;\;z_0+z_1 = 1\}\subset \C^4.$$
We can use $(z_1, u)$ as co-ordinates on this subspace, then we can draw the triangulation of $\Pi$ (see Figure~\ref{fig:stdTriangulations}\ref{3foldfan}).
\end{eg}

\noindent {(2)}
 For $k=2$, the  polytope $\Pi$ lies in the affine subspace
$$\{u + v= z_0 + z_1 + z_2, \;\;\;\;z_0+z_1+z_2 = 1\}\subset \C^5.$$
See Figure~\ref{fig:stdTriangulations}\ref{4foldfan} for the triangulation of $\Pi$.

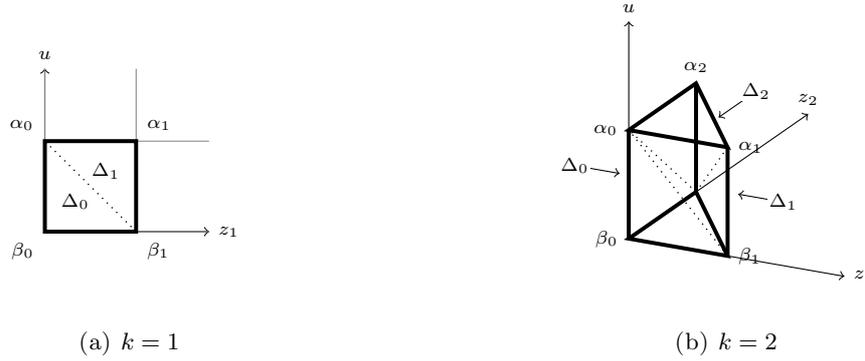
\begin{figure}[h]
                  \begin{minipage}[b]{.5\linewidth}
                    \centering
\def\axes{1.8}
\begin{tikzpicture}[scale=1.2]
\draw[->] (0,0) -- (\axes,0) node[right]{\scriptsize $z_1$};
\draw[->] (0,0) -- (0,\axes) node[above]{\scriptsize $u$};
\draw[help lines] (0,0) grid (\axes,\axes);

\draw[line width=1.5pt]
	(0,0) node[below left]{\scriptsize $\polyBaseVec{0}$} --
	(1,0) node[below right]{\scriptsize $\polyBaseVec{1}$} --
	(1,1) node[above right] {\scriptsize $\polyTopVec{1}$} --
	(0,1) node[above left] {\scriptsize $\polyTopVec{0}$} -- cycle;

\def\pos{1/3}
\draw[triangulation] (1,0) -- (0,1);
\node at (\pos,\pos) {\scriptsize $\Delta_0$};
\node at (1-\pos,1-\pos) {\scriptsize $\Delta_1$};

\end{tikzpicture}
\vspace{0.6cm}
                    \subcaption{$k=1$}\label{3foldfan}
                  \end{minipage}%
                 \begin{minipage}[b]{.5\linewidth}
                    \centering
                    \def\axes{2.4}

\begin{tikzpicture}[scale=1.2]
\draw[->] (0,0) -- (\angleA:\axes) node[right]{\scriptsize $z_1$};
\draw[->] (0,0) -- (\angleB:\axes) node[above]{\scriptsize $z_2$};
\draw[->] (0,0) -- (0,\axes) node[above]{\scriptsize $u$};

\polytopeStdTriangulation{\angleA}{\scaleA}{\angleB}{\scaleB}{0}{0}{yes}{basic}{\scaleC};
	
\end{tikzpicture}
\vspace{0.3cm}
                    \subcaption{$k=2$}\label{4foldfan}
                  \end{minipage}
                  \caption{Triangulations of the polytope $\Pi$ whose cones give the fan for the standard phase $X$.} \label{fig:stdTriangulations}
                \end{figure}

\subsection{Subvarieties associated to partitions}\label{sect.subvarieties}

In this section, we'll investigate the subvarieties $X_\Gamma\subset X$ cut out by subsets of the complex moment map equations, as defined in Section~\ref{section.construction}.

To start with, suppose $X_\Gamma$ is the divisor $\{\mu_i = 0\}$. The function $\mu_i$ is not a character of the torus which acts on $X$, so $X_\Gamma$ is not a torus-invariant divisor. Nevertheless, $X_\Gamma$ \textit{is} a toric variety. To see this, we first restrict our attention to the Zariski open torus $(\C^*)^{k+2} \subset X $. Within this locus, the set $\{\mu_i=0\}$ is just a corank 1 subtorus. So $X_\Gamma$ is the closure of this subtorus, and is invariant under the action of the subtorus on $X$, so $X_\Gamma$ is a toric variety. Furthermore, to get the toric fan for $X_\Gamma$ we take the toric fan for $X$ and slice it along the corresponding corank 1 sublattice, namely $\{z_i=z_{i-1}\} \subset \Z^{2+(k+1)}$.

 Similarly, if $X_\Gamma\subset X$ is a subvariety of higher codimension (associated to a partition with fewer parts), then $X_\Gamma$ is also a toric variety, and we can obtain its toric fan by slicing along the appropriate sublattice. We'll now perform this procedure explicitly, and work out the toric fans for all the $X_\Gamma$.

\subsubsection{Toric data}

Fix a partition $\Gamma$, and encode it as a surjective function
$$\gamma: [0,k] \to [0,s] $$
whose level sets are the pieces of the partition. The fan for $X_\Gamma$ lies in the sublattice
\begin{equation}\label{eq.sublattice}\Z^{2+(s+1)} \subset \Z^{2+(k+1)} \end{equation}
cut out by the equations $z_i = z_j$, for all $i$, $j$ such that $\gamma(i)=\gamma(j)$.

\begin{notn}\label{notn.coords} We choose co-ordinates $$(\tilde{u},\tilde{v},\tilde{z}_0,\ldots,\tilde{z}_s)$$ on the sublattice $\Z^{2+(s+1)}$ of \eqref{eq.sublattice}, induced from co-ordinates on the bigger lattice $\Z^{2+(k+1)}$ in the natural way: we set $\tilde{u}=u$, $\tilde{v}=v$, and for each $t\in [0,s]$ we have a co-ordinate $\tilde{z}_t$, which is the restriction of $z_{i}$ for all $i\in \gamma^{-1}(t)$. We write
 $$N_t:= \#(\gamma^{-1}(t)) $$
for the sizes of the pieces of the partition.
\end{notn}
\begin{eg}\label{eg.subvariety_coords}  Let $k=4$, and let $\Gamma=(0 1)(2 3 4)$. Then $s=1$, and the corresponding function $\gamma$ and the associated co-ordinates $\tilde{z}_i$ are as follows:
\pgap
\begin{center}
\begin{tikzpicture}[scale=0.6]
\draw[->] (0,0) -- (4.8,0) node[right]{\scriptsize $t$};
\draw[->] (0,0) -- (0,1.8) node[above]{\scriptsize $\gamma(t)$};
\draw[help lines] (0,0) grid (4.8,1.8);
\foreach \x in {0,...,4} {
	\draw node at (\x,-0.5) {\scriptsize $\x$};
	} ;
\foreach \y in {0,...,1} {
	\draw node at (-0.5,\y) {\scriptsize $\y$};
	} ;
	 
\draw [line width=2pt] (0,0) -- (1,0);
\draw [line width=2pt] (2,1) -- (4,1);

\def\pos{-1.5}
\foreach \index in {0,...,4} {
	\draw node (coordz\index) at (\index,\pos) {$z_\index$};
	} ;
	
\def\posNew{\pos-0.3}
\draw [decorate,decoration={brace,mirror,amplitude=3pt}] 
(0,\posNew) -- (1,\posNew) node (coordztilde) [black,midway,yshift=-10pt] {$\tilde{z}_0$} ; 
\draw [decorate,decoration={brace,mirror,amplitude=3pt}] 
(2,\posNew) -- (4,\posNew) node [black,midway,yshift=-10pt] {$\tilde{z}_1$} ; 

\end{tikzpicture}
\end{center}
\end{eg}
Recall from Remark~\ref{rem.cone} that the union of all the cones in the fan for $X$ is the intersection of the positive orthant in $\Z^{2+(k+1)}$ with the hyperplane \eqref{hyperplaneImP}. Consequently, the union of all the cones in the fan for $X_\Gamma$ must be the intersection of the positive orthant 
$$\set{\tilde{u}, \tilde{v}, \tilde{z}_0,\ldots,\tilde{z}_s \geq 0 } \subset \Z^{2+(s+1)} $$
with the hyperplane
\begin{equation}\label{hyperplaneforGamma} \tilde{u} + \tilde{v} = \sum_{t=0}^s N_t \tilde{z}_t. \end{equation}
 
The lattice points that lie in this locus are easy to determine, they form a cone generated by the set of points
\begin{equation}\label{eq.polyVecDef}\polyVec{t}{\delta} := (\delta, N_t-\delta, 0, \ldots, 0, \underset{\stackrel{\uparrow}{\scriptstyle t}}{1}, 0, \ldots, 0)\end{equation}where $t\in [0,s]$, and $\delta\in [0, N_t]$. The convex hull of all these generating points $\polyVec{t}{\delta}$ is a  polytope
\begin{equation}\label{eq.prism2}\tilde{\Pi} \cong \Delta^s\times I\end{equation}
which again is isomorphic to a prism on a simplex. The points at the vertices of $\tilde{\Pi}$ are the two $s$-simplices
$$\simplex{\polyVec{0}{0},\polyVec{1}{0},\ldots,\polyVec{s}{0}}
\hspace{1cm}\mbox{and}\hspace{1cm} 
\simplex{\polyVec{0}{N_1},\polyVec{1}{N_2},\ldots,\polyVec{s}{N_s}}$$
which are embedded in a non-standard way. All the other points $\polyVec{t}{\delta}$ lie along edges of $\tilde{\Pi}$.

 We've determined that the union of all the cones in the fan for $X_\Gamma$ is the cone on this polytope $\tilde{\Pi}$. To determine the individual cones, we recall that the polytope $\Pi$ for $X$ is subdivided into $(k+1)$-simplices $\Delta_0,\ldots,\Delta_k$, and the cones on these simplices define the toric fan for $X$.  To get the cones in the fan for $X_\Gamma$, we take the cone on each simplex $\Delta_i$ and slice it along the hyperplane \eqref{hyperplaneforGamma}. 

The cone on $\Delta_i$ is cut out of the positive orthant in $\Z^{2+(k+1)}$ by the inequalities  \eqref{inequalitiesforDeltai}, so after slicing we obtain the subset of the positive orthant in $\Z^{2+(s+1)}$ cut out by the inequalities
\begin{align*}
\tilde{u} \geq \sum_{t=0}^{s} n^i_t \tilde{z}_t, \hspace{1cm} \tilde{v} \geq \sum_{t=0}^{s} m^i_t \tilde{z}_t,
 \end{align*}
where the integers $n^i_t$ and $m^i_t$ are defined as follows:
\begin{eqnarray*}n^i_t = \#\left(\gamma^{-1}(t)\cap [0,i-1] \right), 
\hspace{1cm} m^i_t &=&  \#\left(\gamma^{-1}(t)\cap [i+1,k] \right) \\
&=& \left\{ \begin{array}{cc} N_t-n^i_t, &\mbox{for}\,\, t\neq \gamma(i) \\
 N_{\gamma(i)} -n^i_{\gamma(i)} -1, & \mbox{for}\,\, t =\gamma(i) \end{array}\right. 
\end{eqnarray*}
The lattice points that lie in this region necessarily form a cone generated by some subset of the lattice points in $\tilde{\Pi}$. This subset consists of
\begin{equation}\label{tildeDeltai}\text{$\polyVec{0}{n^i_0}$, $\polyVec{1}{n^i_1}$, $\ldots$ , $\polyVec{s}{n^i_s}$, and $\polyVec{\gamma(i)}{n^i_{\gamma(i)}+1}$.}\end{equation}
These points span an  $(s+1)$-simplex, which we'll denote by $\tilde{\Delta}_i$. Together these simplices form a triangulation of the polytope $\tilde{\Pi}$. In summary:

\begin{lem}\label{lem.raygenerators} The toric fan for $X_\Gamma$ consists of the cones on the simplices $\tilde{\Delta}_i$. In particular, the complete set of points $\polyVec{t}{\delta}$, as defined by \eqref{eq.polyVecDef}, generate all the rays.\end{lem}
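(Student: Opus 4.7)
The plan is to verify that the combinatorial description already outlined in the paragraph preceding the statement assembles into a valid toric fan. The support of the fan and the polytope $\tilde{\Pi}$ have been pinned down, so what remains is (i) to check that for each $i\in[0,k]$, slicing $\cone{\Delta_i}$ along the sublattice \eqref{eq.sublattice} produces precisely a simplicial cone on the $(s+1)$-simplex $\tilde{\Delta}_i$ with vertices \eqref{tildeDeltai}, and (ii) to check that the $\tilde{\Delta}_i$ triangulate $\tilde{\Pi}$ without redundancy.

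For (i), I would work directly from the inequalities already derived. Using the identities $N_t = n^i_t + m^i_t$ for $t\neq\gamma(i)$ and $N_{\gamma(i)} = n^i_{\gamma(i)} + m^i_{\gamma(i)} + 1$, a lattice point in the sliced cone is uniquely determined by $\tilde{z}_0,\ldots,\tilde{z}_s\in\Z_{\geq 0}$ together with an auxiliary non-negative integer $\epsilon := \tilde{u} - \sum_t n^i_t\tilde{z}_t$ lying in $[0,\tilde{z}_{\gamma(i)}]$, with $\tilde{v}$ then forced by the hyperplane equation. Setting exactly one $\tilde{z}_{t_0}$ equal to $1$ and varying $\epsilon$ over its allowed range reproduces precisely the $s+2$ generators of \eqref{tildeDeltai}, and every other lattice point in the sliced cone is a non-negative integer combination of them. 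A glance at $\tilde{z}$-coordinates shows the first $s+1$ generators project to the standard basis of $\Z^{s+1}$, while the extra generator $\polyVec{\gamma(i)}{n^i_{\gamma(i)}+1}$ differs from $\polyVec{\gamma(i)}{n^i_{\gamma(i)}}$ only in the $(\tilde{u},\tilde{v})$-coordinates, so the full set is linearly independent and spans a simplicial cone of the expected dimension $s+2$.

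For (ii), since the $\Delta_i$ tile $\Pi$ and meet along common faces in the fan for $X$, their slices tile $\tilde{\Pi}$ and meet along slices of common faces, which are themselves faces of the $\tilde{\Delta}_i$. The one mildly subtle point, which I expect to be the main obstacle, is ruling out coincidences $\tilde{\Delta}_i=\tilde{\Delta}_j$ for $i\neq j$: if $\gamma(i)\neq\gamma(j)$ the multiplicity tuples $(n^i_\bullet)$ and $(n^j_\bullet)$ already disagree in positions $\gamma(i)$ and $\gamma(j)$, while if $\gamma(i)=\gamma(j)$ with $i<j$ then $n^j_{\gamma(j)} > n^i_{\gamma(i)}$, so the extra vertex of $\tilde{\Delta}_i$ is not a vertex of $\tilde{\Delta}_j$. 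The statement about ray generators then follows immediately: each $\polyVec{t}{\delta}$ with $\delta\in[0,N_t]$ appears as a vertex of some $\tilde{\Delta}_i$, as one sees by tracking how $n^i_t$ increments from $0$ to $N_t$ as $i$ ranges over $[0,k]$, using the auxiliary vertex $\polyVec{\gamma(i)}{n^i_{\gamma(i)}+1}$ to cover the endpoint case $\delta=N_t$ when $t=\gamma(k)$.
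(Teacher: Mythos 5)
Your proposal is correct and takes essentially the same approach as the paper: Lemma~\ref{lem.raygenerators} is stated there as a summary of precisely this computation, namely slicing the cone on each $\Delta_i$ along the sublattice \eqref{eq.sublattice}, rewriting the inequalities \eqref{inequalitiesforDeltai} via the integers $n^i_t$, $m^i_t$, and reading off the generators \eqref{tildeDeltai}, which triangulate $\tilde{\Pi}$. Your $\epsilon$-parametrization of the lattice points, the distinctness of the $\tilde{\Delta}_i$, and the gluing-along-faces check simply make explicit details the paper leaves implicit.
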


\begin{rem}The whole of $\tilde{\Pi}$ lies in the affine hypersurface
$$\set{\tilde{z}_0 + \ldots + \tilde{z}_s = 1}, $$
and hence $X_\Gamma$ is Calabi--Yau.\end{rem}

\begin{rem}\label{rem.visualize}This triangulation of $\tilde{\Pi}\cong \Delta^s\times I$ is easy to visualize: start with the base $s$-simplex 
$$\simplex{\polyVec{0}{0},\ldots,\polyVec{s}{0}},$$
and turn it into an $(s+1)$-simplex by adjoining the point $\polyVec{\gamma(0)}{1}$. Now take the `top' face of this $(s+1)$-simplex, namely
$$\simplex{\polyVec{0}{0},\ldots,\polyVec{\gamma(0)}{1},\ldots,\polyVec{s}{0}},$$
and extend it to a new $(s+1)$-simplex by adjoining the point $\polyVec{\gamma(1)}{n^2_{\gamma(1)}}$. This new point will either be $\polyVec{\gamma(1)}{2}$, if $\gamma(0)=\gamma(1)$, or $\polyVec{\gamma(1)}{1}$, if $\gamma(0)\neq\gamma(1)$. We continue in this way, increasing the `heights' of the vertices according to the values of $\gamma$, until we reach all the maximum heights $N_0,\ldots,N_s$ and have triangulated the whole of $\tilde{\Pi}$.\end{rem}

\begin{eg}\label{eg.2foldtriangulation}We return to the very simple example of \ref{eg.3foldfan}, the conifold. The subvariety corresponding to the partition $\Gamma=(0 1)$ is the $A_1$ surface of Section~\ref{sect.thesurfaces}, and the fan is the cone on the polytope in Figure~\ref{fig:polytopesForSubvarieties}\ref{2foldfan}.
\end{eg}

\begin{eg}\label{eg.triangulation}
 Let $k=4$, so that $X$ is a $6$-fold, and let $\Gamma$ be the partition as in Example~\ref{eg.subvariety_coords}. Then $s=1$, so $X_\Gamma$ is a $3$-fold. The  polytope $\tilde{\Pi}$ lies in the affine subspace
$$\{\tilde{u} + \tilde{v}= 2\tilde{z}_0 + 3\tilde{z}_1, \;\;\;\;\tilde{z}_0+\tilde{z_1} = 1\}\subset \C^4.$$
We can use $(\tilde{z_1}, \tilde{u})$ as co-ordinates on this subspace, then we can draw the triangulation of $\tilde{\Pi}$ (see Figure~\ref{fig:polytopesForSubvarieties}\ref{3foldfan2}).
\begin{figure}[h]
\begin{center}
                  \begin{minipage}[b]{.1\linewidth}
                  \end{minipage}%
                  \begin{minipage}[b]{.4\linewidth}
                    \centering
\begin{tikzpicture}[scale=1.2]
\draw[->] (0,0) -- (0.5,0) node[right]{\scriptsize $z_1$};
\draw[->] (0,0) -- (0,2.4) node[above]{\scriptsize $u$};
\draw[help lines] (0,0) grid (0.5,2.4);

\draw[line width=1.5pt]
	(0,2) node[left] {\scriptsize $\polyVec{0}{2}$} --
	(0,1) node[left] {\scriptsize $\polyVec{0}{1}$} --
	(0,0) node[left] {\scriptsize $\polyVec{0}{0}$};

\node at (0.2,1.5) {\scriptsize $\tilde{\Delta}_1$};
\node at (0.2,0.5) {\scriptsize $\tilde{\Delta}_0$};

\end{tikzpicture}
\vspace{0.7cm}
                    \subcaption{Triangulation for $k=1$, $\Gamma=(0 1)$. \\ The associated $X_\Gamma$ is the $A_1$ surface.}\label{2foldfan}
                  \end{minipage}%
                  \begin{minipage}[b]{.4\linewidth}
                    \centering
\begin{tikzpicture}[scale=1.2]
\draw[->] (0,0) -- (1.99,0) node[right]{\scriptsize $\tilde{z_1}$};
\draw[->] (0,0) -- (0, 3.4) node[above]{\scriptsize $\tilde{u}$};
\draw[help lines] (0,0) grid (1.99, 3.4);

\draw[line width=1.5pt]
	(0,0) node[left]{\scriptsize $\polyVec{0}{0}$} -- 
	(0,1) node[left]{\scriptsize $\polyVec{0}{1}$} --
	(0,2) node[left]{\scriptsize $\polyVec{0}{2}$} --
	(1,3) node[right]{\scriptsize $\polyVec{1}{3}$} --
	(1,2) node[right]{\scriptsize $\polyVec{1}{2}$} --
	(1,1) node[right]{\scriptsize $\polyVec{1}{1}$}--
	(1,0) node[right]{\scriptsize $\polyVec{1}{0}$}-- cycle;

\draw[triangulation] (1,0) -- (0,1);
\draw[triangulation] (1,0) -- (0,2);
\draw[triangulation] (1,1) -- (0,2);
\draw[triangulation] (1,2) -- (0,2);
\node at (1/3,1/3) {\scriptsize $\tilde{\Delta}_0$};
\node at (1/3,1) {\scriptsize $\tilde{\Delta}_1$};
\node at (3/4,1) {\scriptsize $\tilde{\Delta}_2$};
\node at (2/3,5/3) {\scriptsize $\tilde{\Delta}_3$};
\node at (2/3,7/3) {\scriptsize $\tilde{\Delta}_4$};

\end{tikzpicture}
\vspace{0.3cm}
                    \subcaption{Triangulation for $k=4$, $\Gamma=(0 1)(2 3 4)$. \\ In this case, $X_\Gamma$ is a $3$-fold.}\label{3foldfan2}
                  \end{minipage}
                  \caption{Triangulations of polytope $\tilde{\Pi}$ associated with subvarieties $X_\Gamma$.} \label{fig:polytopesForSubvarieties}
\end{center}
                \end{figure}
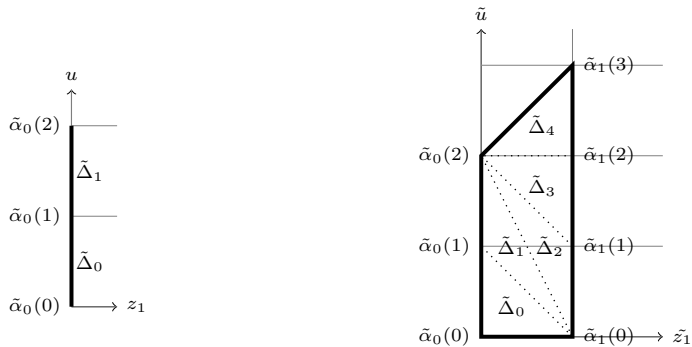
\end{eg}

\subsubsection{GIT description}
\label{sect.subvarietyGITdescrip}

We've constructed the toric data for $X_\Gamma$, however we claim that $X_\Gamma$ is in fact a GIT quotient of a vector space by a torus, which for us is a more useful description. (Of course this is not surprising, as it's true of `most' toric varieties.) We'll first argue this abstractly, then we'll explicitly identify the GIT data.

If we forget the individual cones in the fan (i.e. the triangulation of $\tilde{\Pi}$), then we have only the data of the set of $(k+s+2)$ vectors $\polyVec{t}{\delta}$ that generate the rays. Let $\set{\polyVecSymbol{t}{\delta}}$ be an abstract set bijecting with the set of vectors $\{\polyVec{t}{\delta}\}$. Then we have a map between lattices
\begin{align*} \tilde{P}: \Z^{k+s+2} := \langle \polyVecSymbol{t}{\delta} \rangle_{\Z} & \to \Z^{2+(s+1)}, \\
\polyVecSymbol{t}{\delta} & \mapsto \polyVec{t}{\delta}.
\end{align*}
 The kernel of this map is a rank $k$ lattice, and the associated torus $\tilde{T} = (\C^*)^k$  acts on the vector space $\tilde{V}$ generated by the $\polyVecSymbol{t}{\delta}$. This is a GIT problem, and the possible phases $\tilde{V}\sslash \tilde{T}$ correspond to particular subdivisions of the polytope $\tilde{\Pi}$. 

\begin{prop}\label{prop.XGammaisaGITquotient} $X_\Gamma$ is one of the phases of the GIT problem $\tilde{V} \sslash \tilde{T}$.
\end{prop}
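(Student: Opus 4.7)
The plan is to invoke the standard Cox construction, which realizes any smooth simplicial toric variety whose rays span the ambient lattice as a GIT quotient of an affine space by a torus, with the different phases corresponding to regular triangulations of the supporting polytope via the PL function shortcut recalled in Section~\ref{sect.reps_free_quiver}. Applied to the fan of $X_\Gamma$ from Lemma~\ref{lem.raygenerators}, this construction uses precisely the affine space $\tilde V$ spanned by the $\polyVecSymbol{t}{\delta}$ and the torus $\tilde T=\ker(\tilde P)$ in the statement. It therefore remains to exhibit a stability condition $\tilde\vartheta$ for $\tilde T$ whose associated regular triangulation of $\tilde\Pi$ is precisely $\{\tilde\Delta_i\}$.

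The natural candidate is inherited from the character $\vartheta$ defining the standard phase of $X$. Ordering each piece as $\gamma^{-1}(t)=\{i_1^{(t)}<\cdots<i_{N_t}^{(t)}\}$, I would set
\[
\tilde\vartheta(\polyVecSymbol{t}{\delta}) \,:=\, \sum_{j=1}^{\delta}\vartheta_{i_j^{(t)}}.
\]
This equals the value $\hat\phi(L(\polyVec{t}{\delta}))$ of the PL extension $\hat\phi$ of $\vartheta$ on the cone over $\Pi$, evaluated at the lift
\[
L(\polyVec{t}{\delta}) \,:=\, \sum_{j=1}^{\delta}\polyTopVec{i_j^{(t)}}+\sum_{j=\delta+1}^{N_t}\polyBaseVec{i_j^{(t)}}.
\]
A direct inspection using the description~\eqref{tildeDeltai} of the vertices of $\tilde\Delta_i$ shows that all such lifts lie in the single cone $\operatorname{Cone}(\Delta_i)$ on which $\hat\phi$ is linear, so $\tilde\vartheta$ extends linearly across each $\tilde\Delta_i$.

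The final step is to verify strict concavity across a shared facet of adjacent simplices $\tilde\Delta_i,\tilde\Delta_{i+1}$. Using the relation $\polyBaseVec{i}=\polyBaseVec{i+1}+\polyTopVec{i}-\polyTopVec{i+1}$, the linear extension from $\operatorname{Cone}(\Delta_{i+1})$ applied at the \emph{new} vertex of $\tilde\Delta_i$ exceeds its $\tilde\vartheta$-value by exactly $\vartheta_i-\vartheta_{i+1}$, which is strictly positive by~\eqref{standardtheta}. This strict excess gives the required strict concavity, so $X_\Gamma$ is the phase of $\tilde V\sslash\tilde T$ selected by $\tilde\vartheta$. The principal obstacle is the combinatorial bookkeeping of how the integers $n^i_t$ vary with $i$ across adjacent simplices, and of the precise cone $\operatorname{Cone}(\Delta_i)$ containing each lift; no further substantive idea is needed beyond the above lifting construction.
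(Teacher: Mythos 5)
Your proposal is correct, but it takes a different route from the paper. The paper's own proof is a short abstract argument: $X_\Gamma$ is projective-over-affine, the restriction $\tilde L=L|_{X_\Gamma}$ of the ample line bundle coming from $\theta$ is again toric, hence corresponds to a character $\tilde\theta$ of $\tilde T$, and the associated GIT quotient is by definition $X_\Gamma$; no combinatorics is needed. You instead prove the statement via the secondary-fan/regular-triangulation mechanism, exhibiting explicitly a lifted PL function which is linear on each $\tilde\Delta_i$ and strictly concave across the interior walls. Your height function $\tilde\vartheta(\polyVecSymbol{t}{\delta})=\sum_{j\le\delta}\vartheta_{i_j^{(t)}}$ is exactly the paper's formula \eqref{eqn.tildevartheta} (the $\delta$ largest $\vartheta$-values in the $t^{\text{th}}$ part correspond to its smallest indices), your observation that each vertex of $\tilde\Delta_i$ decomposes using only ray generators of $\Delta_i$ is what makes the restriction of the concave extension linear on $\operatorname{Cone}(\tilde\Delta_i)$, and your wall computation giving an excess of $\vartheta_i-\vartheta_{i+1}>0$ (valid uniformly in the coplanar and collinear cases) is precisely the paper's later identity \eqref{eqn.tildetheta}, $\tilde\theta(\tau_i)=\vartheta_{i-1}-\vartheta_i=\theta_i$. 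So in effect you merge the proposition with the explicit identification of $\tilde\theta$ that the paper carries out afterwards in Section~\ref{sect.subvarietyGITdescrip}: your argument is longer and requires the (standard) facts that wall-wise strict concavity on a convex support gives a regular triangulation and that phases of $\tilde V\sslash\tilde T$ correspond to such triangulations, but it buys the sharper conclusion that the standard chamber \eqref{standardtheta} induces stability conditions $\tilde\theta$ landing in the standard chamber for $X_\Gamma$, which the paper's abstract proof does not by itself provide.
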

\begin{proof}Our choice of character $\theta$ specifies a line bundle $L$ on $X$, making $X$ a projective-over-affine variety. By definition, $X_\Gamma$ is also projective-over-affine: it is $\operatorname{Proj}$ of the graded ring of sections of powers of the line bundle $\tilde{L} = L|_{X_\Gamma}$. The line bundle $\tilde{L}$ is necessarily toric, so it corresponds to some character $\tilde{\theta}$ of the torus $(\C^*)^k$, i.e. a stability condition for our new GIT problem. The corresponding GIT quotient is by definition $X_\Gamma$. 
\end{proof}

We'll refer to $X_\Gamma$ as the `standard phase' for this GIT problem. There are some other obvious phases: if we start with a non-standard phase $X^\sigma$ for the ambient space (associated to some $\sigma\in S_{k+1}$), and then impose the moment map equations corresponding to $\Gamma$, we get another toric variety $X_\Gamma^\sigma$ which is birational to $X_\Gamma$. It's clear how to get the toric fan for $X_\Gamma^\sigma$: we just apply the permutation $\sigma$ before running the recipe from the previous section. Consequently $X_\Gamma^\sigma$ is constructed by the same GIT problem that constructs $X_\Gamma$, so it's another phase.

 From the recipe for the toric fans, it's clear that if $\sigma$ fixes the partition $\gamma$ then $X_\Gamma$ and $X_\Gamma^\sigma$ are identical (i.e. isomorphic over the affine base).  Therefore these phases depend only on the coset $\sigma S_\Gamma$, where $S_\Gamma\subset S_{k+1}$ is the symmetry group of $\Gamma$.

 As we will see later, these are not the only phases of this GIT problem: there are phases which are orbifolds, and so cannot arise in this way.
\pgap

Next we'll explicitly identify the kernel of $\tilde{P}$, i.e. the action of $\tilde{T}=(\C^*)^k$ on $\tilde{V}$. Note that for any $t$, and any $\delta\in [0, N_t-1]$, we have 
$$\polyVecSymbol{t}{\delta+1} - \polyVecSymbol{t}{\delta} \overset{\tilde{P}}{\mapsto} (-1, 1, 0,0,\ldots,0),$$
so if we pick any two of these elements then their difference
$$\polyVecSymbol{t}{\delta+1} - \polyVecSymbol{t}{\delta} - \polyVecSymbol{t'}{\delta'+1} + \polyVecSymbol{t'}{\delta'} $$
lies in the kernel of $\tilde{P}$, and it's evident that the whole kernel is spanned by vectors of this form. 

We can write down a convenient basis for $\ker{\tilde{P}}$ by considering pairs of adjacent cones in the toric data. For example, consider the two simplices $\tilde{\Delta}_{0}$ and $\tilde{\Delta}_1$. There are two situations to consider:
\begin{itemize}
\item If $\gamma(0)\neq \gamma(1)$, then the four vectors
$$\polyVec{\gamma(0)}{1},\;\; \polyVec{\gamma(0)}{0}, \;\; \polyVec{\gamma(1)}{1}, \;\; \polyVec{\gamma(1)}{0} \;\;\in\;\; \tilde{\Delta}_0\cup \tilde{\Delta}_1 $$
are coplanar, and we let
$$\tau_1 \;=\; \polyVecSymbol{\gamma(0)}{1}- \polyVecSymbol{\gamma(0)}{0} - \polyVecSymbol{\gamma(1)}{1} + \polyVecSymbol{\gamma(1)}{0} \;\in\; \Z^{k+s+2}$$
which lies in the kernel of $\tilde{P}$.
\item Alternatively, if $\gamma(0)=\gamma(1)$ then the three vectors
$$\polyVec{\gamma(0)}{2},\;\; \polyVec{\gamma(0)}{1}, \;\;\polyVec{\gamma(0)}{0} \;\;\in\;\; \tilde{\Delta}_0\cup \tilde{\Delta}_1 $$
are collinear, and we let 
$$\tau_1 \;= \;  2 \polyVecSymbol{\gamma(0)}{1}  - \polyVecSymbol{\gamma(0)}{2} - \polyVecSymbol{\gamma(0)}{0} \;\in\; \Z^{k+s+2},$$
which also lies in the kernel of $\tilde{P}$.
\end{itemize}
In general, we have the following proposition.

\begin{prop}\label{prop.kernelofPtilde}For $i\in [1,k]$, two adjacent simplices $\tilde{\Delta}_{i-1}$ and $\tilde{\Delta}_i$ give a vector
$$\tau_i \;=\; \polyVecSymbol{\gamma(i-1)}{n^{i-1}_{\gamma(i-1)}+1}
- \polyVecSymbol{\gamma(i-1)}{n^{i-1}_{\gamma(i-1)}} 
- \polyVecSymbol{\gamma(i)}{n^i_{\gamma(i)}+1} 
+ \polyVecSymbol{\gamma(i)}{n^i_{\gamma(i)}}    $$
in the kernel of $\tilde{P}$, and this set of vectors forms a basis. 
\end{prop}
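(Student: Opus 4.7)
My plan is to prove the proposition in three stages: verify each $\tau_i \in \ker \tilde P$, compute the rank of $\ker \tilde P$, and then establish that $\{\tau_1, \ldots, \tau_k\}$ is a $\Z$-basis (not merely a linearly independent subset).

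Membership is immediate from the observation, already recorded in the paragraph preceding the proposition, that $\polyVec{t}{\delta+1} - \polyVec{t}{\delta} = (-1, 1, 0, \ldots, 0)$ is independent of $t$ and $\delta$. Using the identity $n^i_{\gamma(i-1)} = n^{i-1}_{\gamma(i-1)} + 1$, each $\tau_i$ is seen to be a ``difference of two such differences'' in both the coplanar case $\gamma(i-1) \neq \gamma(i)$ and the collinear case $\gamma(i-1) = \gamma(i)$ (in which the two differences share an endpoint, collapsing to the three-term form stated before the proposition). For the rank count, the image of $\tilde P$ lies inside the rank-$(s+2)$ sublattice of $\Z^{2+(s+1)}$ cut out by the hyperplane \eqref{hyperplaneforGamma}, and equality of ranks holds because the images $\polyVec{t}{0}$ for $t \in [0,s]$ together with $\polyVec{0}{1}$ are manifestly $\Z$-linearly independent. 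Hence $\operatorname{rank} \ker \tilde P = (k+s+2) - (s+2) = k$.

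For the basis claim, I would invoke Remark~\ref{rem.visualize}: in building the triangulation of $\tilde\Pi$ step by step, at each step $i \geq 1$ exactly one new ray generator enters the picture, namely $v_i := \polyVecSymbol{\gamma(i)}{n^i_{\gamma(i)}+1}$. The $v_i$ are pairwise distinct (for $i < j$ with $\gamma(i) = \gamma(j)$, the counter $n^j_{\gamma(j)}$ strictly exceeds $n^i_{\gamma(i)}$, since $i \in \gamma^{-1}(\gamma(i)) \cap [0, j-1]$ but $i \notin [0, i-1]$), and together with the ``base'' generators $\polyVecSymbol{0}{0}, \ldots, \polyVecSymbol{s}{0}, \polyVecSymbol{0}{1}$ they form a $\Z$-basis of $\Z^{k+s+2}$. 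Each $\tau_i$ has $-v_i$ as one of its four summands; a case-by-case inspection of the four terms constituting $\tau_j$ shows that $v_i$ does not appear in $\tau_j$ for any $j < i$. (Each potential match demands equality of a counter $n^{j-1}_{\gamma(i)}$ or $n^j_{\gamma(i)}$ with $n^i_{\gamma(i)}$, which fails because some element of $\gamma^{-1}(\gamma(i))$ always lies in the interval $[j-1, i-1]$, for instance $j-1$ itself when $\gamma(j-1)=\gamma(i)$, or $j$ when $\gamma(j)=\gamma(i)$.) Consequently, projecting modulo the base generators, the change-of-basis matrix from $\{\tau_1, \ldots, \tau_k\}$ to $\{v_1, \ldots, v_k\}$ is upper triangular with $-1$'s on the diagonal, hence unimodular, and $\{\tau_i\}$ is a $\Z$-basis for $\ker \tilde P$. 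The main obstacle is precisely this last bookkeeping step, where one must carefully track the counters $n^j_t$ across potentially many return visits of $\gamma$ to the same part of the partition; the other steps reduce to matching definitions against the rank count.
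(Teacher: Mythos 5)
Your argument is correct, and in fact it supplies details that the paper itself omits: the paper merely records that vectors of the shape $\polyVecSymbol{t}{\delta+1}-\polyVecSymbol{t}{\delta}-\polyVecSymbol{t'}{\delta'+1}+\polyVecSymbol{t'}{\delta'}$ lie in $\ker\tilde{P}$, asserts that such vectors evidently span the kernel, and then states the proposition without further proof. Your three-stage verification --- kernel membership, the rank count $\operatorname{rank}\ker\tilde{P}=(k+s+2)-(s+2)=k$ via the $s+2$ independent images, and the unimodular triangular completion using the vertex $v_i=\polyVecSymbol{\gamma(i)}{n^i_{\gamma(i)}+1}$ newly adjoined at step $i$ of the triangulation of Remark~\ref{rem.visualize} --- is a legitimate and complete way to get the basis statement over $\Z$ rather than just linear independence over $\Q$. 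The key bookkeeping point, that $v_i$ cannot occur among the four summands of $\tau_j$ for $j<i$ because the counter $n^{\bullet}_{\gamma(i)}$ strictly increases past any index of $\gamma^{-1}(\gamma(i))$ lying in $[j-1,i-1]$, is exactly right, and it gives the diagonal coefficient $-1$ in both the coplanar and collinear cases.

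One small slip to correct: in your list of ``base'' generators, $\polyVecSymbol{0}{1}$ should be $\polyVecSymbol{\gamma(0)}{1}$ (that is, $v_0$, the vertex adjoined at step $0$). As written, if $\gamma(0)\neq 0$ then $\polyVecSymbol{0}{1}$ coincides with some $v_j$ with $j\geq 1$, so your proposed set both repeats an element and omits $\polyVecSymbol{\gamma(0)}{1}$, and hence is not a basis of $\Z^{k+s+2}$. With the corrected choice (or after harmlessly relabelling the parts of $\Gamma$ so that $\gamma(0)=0$) the set is precisely the standard basis, since the $v_j$ for $j\in[0,k]$ exhaust the symbols of positive height, and the rest of your argument goes through unchanged. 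Your separate use of $\polyVec{0}{1}$ in the rank count is fine as it stands, since $N_0\geq 1$.
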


\begin{eg}\label{eg.2foldweights}For example \ref{eg.2foldtriangulation}, the $A_1$ surface, the above Proposition~\ref{prop.kernelofPtilde} gives a single generator
$$\tau_1 \;=\; 2\polyVecSymbol{0}{1}-\polyVecSymbol{0}{0} -  \polyVecSymbol{0}{2}$$
for the kernel. This tells us that $X_\Gamma$ is a GIT quotient of $\C^3$ by $\C^*$ acting with weights $(-1,2,-1)$, which of course we knew. 
\end{eg}

\begin{eg}Returning to the setting of Example~\ref{eg.triangulation}, we find that the two simplices $\tilde{\Delta}_1$ and $\tilde{\Delta}_2$ yield coplanar vectors as in case (1) above, and the other pairs of adjacent simplices give collinear vectors as in case (2). So this $3$-fold $X_\Gamma$ is a GIT quotient of $\C^7$ by the torus $(\C^*)^4$ acting with weights as follows:
$$\begin{pmatrix}-1 &2 & -1 & 0 & 0 & 0 & 0 \\
                    0 &-1 &1 &1 & -1 & 0 & 0 \\
                    0 & 0 & 0 & -1 &2 & -1 & 0 \\
                    0 & 0 & 0 & 0 & -1 &2 & -1 \end{pmatrix} $$

\end{eg}
\pgap

As we explained in the proof of Proposition~\ref{prop.XGammaisaGITquotient}, our choice of character $\theta$ when constructing the ambient space $X$ induces a character $\tilde{\theta}$ of $\tilde{T}$. We'll now identify this character explicitly. This will tell us what stability conditions we can choose on our GIT problem $\tilde{V} \sslash \tilde{T}$ to produce the standard phase $X_\Gamma$.

Recall that our standard $\vartheta$ extends to a concave PL function on the toric fan for $X$. We just restrict this function to the fan for $X_\Gamma$, then its values on the vertices $\polyVec{t}{\delta}$ give a lift $\tilde{\vartheta}$ of the character $\tilde{\theta}$. 
The vertex $\polyVec{t}{\delta}$ can be written as a sum 
$$\polyVec{t}{\delta} = \sum_{i\in D} \polyTopVec{i} + \sum_{j\in \gamma^{-1}(t)-D} \polyBaseVec{j} $$
for any subset $D\subset \gamma^{-1}(t)$ of size $\delta$ (this is immediate from the definition \eqref{eq.polyVecDef}). Because of our choice \eqref{thetaform} of $\vartheta$ we have that $\tilde{\vartheta}(\polyBaseVec{i})=0$, so the fact that the function $\vartheta$ is concave implies that
\begin{equation}\label{eqn.tildevartheta}
\tilde{\vartheta}: \polyVec{t}{\delta} \mapsto \max_{\substack{D\subset\gamma^{-1}(t)\\|D| = \delta}} \sum_{d\in D} \vartheta_d,
\end{equation}
i.e. the sum of the $\delta$ biggest values lying in the $t^{\text{th}}$ part of the partition.
 If we restrict this function $\tilde{\vartheta}$ to the kernel of $\tilde{P}$, this will give us our character $\tilde{\theta}$ of the torus $(\C^*)^k$.   To see what the result is, note first that
$$\tilde{\vartheta} \left(\polyVec{t}{\delta+1}-\polyVec{t}{\delta}\right)\; =\; \vartheta_d $$
where $\vartheta_d$ is the ${(\delta+1)}^{\text{th}}$ biggest value lying in the $t^{\text{th}}$ part of the partition. Consequently
$$\tilde{\vartheta}\left( \polyVec{\gamma(i)}{n_{\gamma(i)}^{i}+1}
-\polyVec{\gamma(i)}{n_{\gamma(i)}^{i}}\right)\; =\; \vartheta_i $$
and therefore
\begin{equation}\label{eqn.tildetheta}\tilde{\theta}(\tau_i) =  \vartheta_{i-1} - \vartheta_i  = \theta_i.\end{equation}
So in this basis, setting all coefficients of the character $\tilde{\theta}$ to be positive produces the standard phase $X_\Gamma$. In fact we will see in the next section that this is precisely the chamber of characters that produce this phase, so we'll refer to it as the \textit{standard chamber} for this GIT problem.

\subsection{Families of rational curves}
\label{sect.familiesofrationalcurves}

In this section we'll make some very straightforward observations on the geometry of $X$ and its subvarieties $X_\Gamma$.

 Recall that the fan for $X$ is the union of the cones on the simplices $\Delta_i$ \eqref{Deltai}. Each such cone corresponds to a Zariski open set $U_i\subset X$, which is isomorphic to $\C^{k+2}$ with co-ordinates $a_0,\ldots,a_i, b_i, \ldots, b_k$. We can also think of $U_i$ as (the image of) the affine subspace in $V$ where we set all the remaining co-ordinates to 1. 
 
 If we take two neighbouring simplices $\Delta_{i-1}$ and $\Delta_i$, and consider the corresponding open sets $U_{i-1}$ and $U_i$, we find that they glue together to give an open set
$$\cN_i := U_{i-1}\cup U_i \cong \cO(-1)^{\oplus 2}_{\P^1} \times \C^{k-1}.$$
The ratio $a_i:b_{i-1}$ gives co-ordinates on the $\P^1$, the co-ordinates $a_{i-1}$ and $b_i$ are the fibre directions on the bundle, and the remaining co-ordinates $a_0,..,a_{i-2},b_{i+1},\ldots,b_k$ parametrize the $\C^{k-1}$.
 We will be interested in the codimension 2 subvarieties 
\begin{equation}\label{eq.twistSubvarieties}S_i = \set{a_{i-1}=b_i = 0} \subset X \end{equation}
corresponding to the zero section of the bundle, where here $1\leq i \leq k$. Each $S_i$ lies entirely within the open set $\cN_i$, so it's a trivial family of rational curves, and it has this very simple Zariski neighbourhood. 

\begin{rem}\label{rem.destabilizedSub}
If we vary our GIT quotient from the standard chamber \eqref{standardtheta} to the neighbouring chamber
\begin{equation}\label{neighbouringchamber}\vartheta_0>\ldots
>\vartheta_{i-2}>\vartheta_{i}>\vartheta_{i-1}
>\vartheta_{i+1}>\ldots>\vartheta_k \end{equation}
by passing through the wall $\theta_i=0$,
then $S_i$ is exactly the locus that becomes unstable. To see this, observe that a cone disappears from the fan if and only if it contains the cone on the interval $[\polyTopVec{i-1}, \polyBaseVec{i}]$, which is the cone corresponding to the toric subvariety $S_i$. See Figure~\ref{fig:flops}\ref{3foldflops} for an example.
\end{rem}

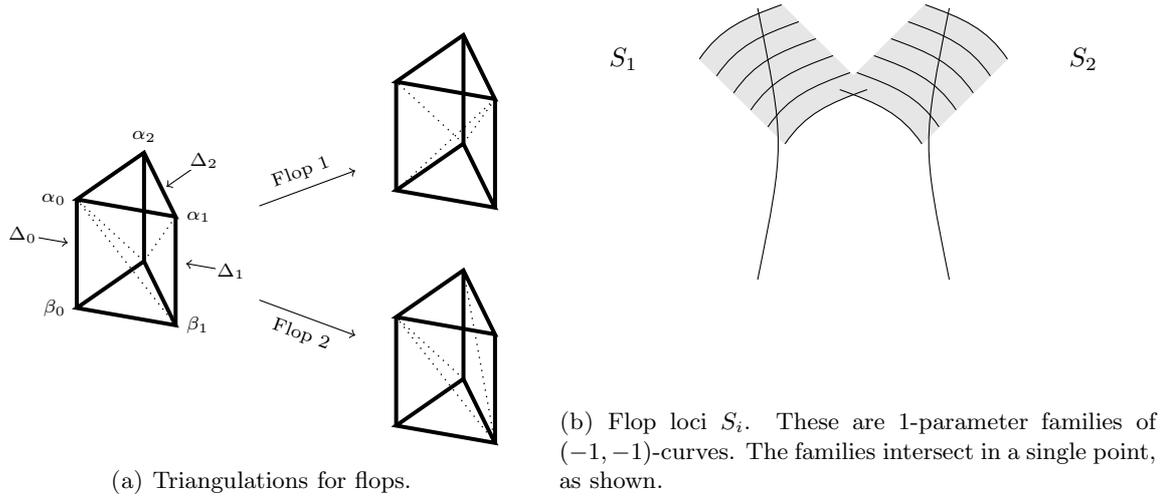
\begin{figure}[h]
                  \begin{minipage}[b]{.5\linewidth}
                    \centering
\begin{tikzpicture}[scale=1.2]

\def\flopxshift{3.5}
\def\flopyshift{1.3}

\polytopeStdTriangulation{\angleA}{\scaleA}{\angleB}{\scaleB}{0}{0}{yes}{basic}{\scaleC};
\polytope{\angleA}{\scaleA}{\angleB}{\scaleB}{\flopxshift}{\flopyshift}{no}{flop1}{\scaleC};
\polytope{\angleA}{\scaleA}{\angleB}{\scaleB}{\flopxshift}{-\flopyshift}{no}{flop2}{\scaleC};

\draw[triangulation] (flop1B0) -- (flop1A1);
\draw[triangulation] (flop1B2) -- (flop1A0);
\draw[triangulation] (flop1B2) -- (flop1A1);
\draw[triangulation] (flop2B1) -- (flop2A0);
\draw[triangulation] (flop2B2) -- (flop2A0);
\draw[triangulation] (flop2B1) -- (flop2A2);

\draw[->] ($(basicCentre)!0.4!(flop1Centre)$) -- node[above,sloped]{\scriptsize Flop $1$} ($(basicCentre)!0.7!(flop1Centre)$);
\draw[->] ($(basicCentre)!0.4!(flop2Centre)$) -- node[below,sloped]{\scriptsize Flop $2$} ($(basicCentre)!0.7!(flop2Centre)$);
\end{tikzpicture}
                    \subcaption{Triangulations for flops. }\label{3foldflops}
                  \end{minipage}%
                  \begin{minipage}[b]{.5\linewidth}
                    \centering
\begin{tikzpicture}[scale=1.8]

\def\halfwidth{0.7}
\def\pinch{0.2}
\def\intersection{0.3}
\def\crossing{0.1}
\def\curve{0}
\def\familysep{0.125}
\def\numcurves{5}

\draw[color=\defBGColor,fill=\defBGColor] \defCurveCoords{+1}{0} -- \defCurveCoordsRev{+1}{\familysep*\numcurves} -- cycle;
\draw[color=\defBGColor,fill=\defBGColor] \defCurveCoords{-1}{0} -- \defCurveCoordsRev{-1}{\familysep*\numcurves} -- cycle;

\foreach \i in {0,...,\numcurves} {
	\draw \defCurveCoords{+1}{\familysep*\i};
	\draw \defCurveCoords{-1}{\familysep*\i};
	
}

\coordinate (S1) at (-\halfwidth+\pinch-\familysep*\numcurves,\familysep*\numcurves);
\node[left of=S1] {$S_1$};

\coordinate (S2) at (+\halfwidth-\pinch+\familysep*\numcurves,\familysep*\numcurves);
\node[right of=S2] {$S_2$};

\draw (-\halfwidth,1) .. controls (-\halfwidth+\pinch,0) .. (-\halfwidth,-1);
\draw (+\halfwidth,1) .. controls (+\halfwidth-\pinch,0) .. (+\halfwidth,-1);

\end{tikzpicture}
\vspace{1.5cm}
                    \subcaption{Flop loci $S_i$. These are $1$-parameter families of $(-1,-1)$-curves. The families intersect in a single point, as shown.}\label{fig.flopsloci}
                  \end{minipage}
                  \caption{Flop geometry for $k=2$, for $4$-fold $X$.}\label{fig:flops}
                \end{figure}

\subsubsection{Subvarieties $X_\Gamma$}
Now we pick a partition $\Gamma$ with $s$ pieces. We want to find the intersection of these $S_i$ with the subvariety $X_\Gamma$, which is cut out by some subset of the equations $z_j - z_l=0$. Within $\cN_i$, the global functions $z_j$ become 
$$z_j = \left\{\begin{array}{ll} a_j & j\leq i-2, \\
a_j b_j &  i\leq j \leq i+1,\\
b_j & j\geq i+1.
\end{array}\right. $$ 
There are two cases to distinguish:
\begin{list}{(\alph{enumi})}{\usecounter{enumi}}
\item If $i$ and $i-1$ are not in the same piece of $\Gamma$, so we don't impose $z_{i-1}=z_i$, then we have
 $$\tilde{\cN}_i := \cN_i \cap X_\Gamma \cong \cO(-1)^{\oplus 2}_{\P^1} \times \C^{s-1}. $$
The subvarieties $S_i$ and $X_\Gamma$ intersect transversely, and the subvariety
\begin{equation}\label{eqn.twistSubvarietiesNonVersal}\tilde{S}_i := S_i\cap X_\Gamma\end{equation}
 is an $(s-1)$-parameter trivial family of rational $(-1,-1)$-curves.
\item If $i$ and $i-1$ are in the same piece of $\Gamma$ then
$$\tilde{\cN}_i = \cN_i \cap X_\Gamma \cong \cO(-2)_{\P^1} \times \C^s. $$
In this case the intersection of $S_i$ and $X_{\Gamma}$ is not transverse, and $\tilde{S}_i$ is an $s$-parameter trivial family of rational $(-2)$-curves. 
\end{list}
In each case, $\tilde{S}_i$ has this very simple Zariski neighbourhood $\tilde{\cN}_i $.

\begin{figure}[h]
\begin{center}
\begin{tikzpicture}[scale=1.5]

\def\halfwidth{0.7}
\def\pinch{0.2}

\def\intersection{0.3}
\def\crossing{0.1}
\def\curve{0}
\def\familysep{0.125}
\def\numcurves{5}

\draw[color=\defBGColor,fill=\defBGColor] \defCurveCoords{+1}{0} -- \defCurveCoordsRev{+1}{\familysep*\numcurves} -- cycle;

\foreach \i in {0,...,\numcurves} {
	\draw \defCurveCoords{+1}{\familysep*\i};

}

\def\i{0}
	\draw \defCurveCoords{-1}{\familysep*\i};

\coordinate (S1) at (-\halfwidth+\pinch-\familysep*\numcurves,\familysep*\numcurves);
\node[left of=S1] {$S_1$};

\coordinate (S2) at (+\halfwidth-\pinch,0);
\node[right of=S2] {$S_2$};

\draw (-\halfwidth,1) .. controls (-\halfwidth+\pinch,0) .. (-\halfwidth,-1);
\draw (+\halfwidth,1) .. controls (+\halfwidth-\pinch,0) .. (+\halfwidth,-1);

\end{tikzpicture}
\end{center}
\caption{Flop loci for $k=2$, partition $\Gamma = (0 1)(2)$. In this case $s=1$, and $X_\Gamma$ is a $3$-fold. The locus $S_1$ is a $1$-parameter families of $(-2)$-curves, and $S_2$ is a single $(-1,-1)$-curve. Compare Figure~\ref{fig:flops}\ref{fig.flopsloci}.}\label{floploci}
\end{figure}
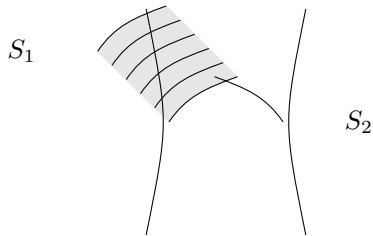

We can also find these $\tilde{S}_i$ in the toric data for $X_\Gamma$, or equivalently in terms of the GIT problem $\tilde{V} \sslash \tilde{T}$.  Recall that the toric fan for $X_\Gamma$ is the union of the cones on the simplices $\tilde{\Delta}_i$ \eqref{tildeDeltai}. Each such cone corresponds to a `toric chart' $\tilde{U}_i\subset X_\Gamma$, isomorphic to affine space, and $\tilde{\cN}_i = \tilde{U}_{i-1}\cup \tilde{U}_i$, so $\tilde{\cN}_i$ is a toric variety whose fan has only two cones. 
If we inspect this toric data, we see that it's constructing $\tilde{\cN}_i$ as a GIT quotient
$$\tilde{\cN}_i = \tilde{V}_i \sslash_{\tilde{\theta}}\, \tau_i $$
where $\tilde{V}_i\subset \tilde{V}$ is the subspace spanned by the vertices that appear in  $\tilde{\Delta}_{i-1}\cup \tilde{\Delta}_i$, and $\tau_i\subset \tilde{T}$ is the 1-parameter subgroup identified in Proposition~\ref{prop.kernelofPtilde}. The torus action is trivial in some directions (these contribute the trivial directions in $\tilde{\cN}_i$), and in the remaining directions it's the usual GIT problem from Section~\ref{sect.A1case} constructing either (a) $\cO(-1)^{\oplus 2}_{\P^1}$ or (b) $\cO(-2)_{\P^1}$. 

By \eqref{eqn.tildetheta}, the value of a standard stability condition $\tilde{\theta}$ on the 1-parameter subgroup $\tau_i$ is  $( \vartheta_{i-1} - \vartheta_i)$. Consequently, when we leave the standard chamber across the wall corresponding to $\vartheta_i=\vartheta_{i-1}$ the family of curves $\tilde{S}_i$ becomes unstable. What happens when we cross the wall differs in the two cases:
\begin{list}{(\alph{enumi})}{\usecounter{enumi}}
\item Across the wall, there is a neighbouring phase for $X_\Gamma$ where the locus $\tilde{S}_i$ has been flopped: locally around $\tilde{S}_i$, this is just a trivial family of standard $3$-fold flops. This neighbouring phase is a subvariety inside a non-standard phase of the ambient space $X$: we move to the chamber \eqref{neighbouringchamber} and then impose moment maps corresponding to the same $\Gamma$. In other words, this flop of the locus $\tilde{S}_i$ is just induced by a flop of the locus $S_i$ inside $X$.
\item Across the wall there is a neighbouring phase for $X_\Gamma$ where the locus $\tilde{S}_i$ has been removed, and replaced with a trivial family of orbifold points with $\Z_2$ isotropy groups. This neighbouring phase is an orbifold, and so cannot be a subvariety inside any phase of $X$. The characters $\tilde{\theta}$ in this neighbouring chamber do not arise from any $\theta$.
\end{list}
If we travel further away from the standard chamber then we can reach other kinds of phases, which can have larger isotropy groups. Note also that the above description still works if we start in any phase  $X_\Gamma^\sigma$ induced from a non-standard phase $X^\sigma$ of the ambient space: we just have to permute the variables.

\subsection{Derived equivalences from wall-crossing}\label{sect.derivedequivalencesfromwalls}

In this section we'll discuss the derived equivalences that correspond to the wall-crossing described in the previous section, following the general theory of \cite{DHL, BFK} and especially \cite{HLS}.\footnote{The general theory deals with GIT quotients under arbitrary reductive groups,  since we only care about the abelian case we'll only be using a small part of it.}

 Start with a standard phase $X_\Gamma$ of one our subvarieties, pick a value of $i$, and let $X_\Gamma '$ be the phase obtained by crossing the wall corresponding to $\vartheta_i=\vartheta_{i-1}$. As we saw in Section~\ref{sect.familiesofrationalcurves}, the birational transformation between $X_\Gamma$ and $X_\Gamma '$ is locally just a trivial family of the two kinds of $A_1$ flops discussed in Section~\ref{sect.A1case}. Consequently, there are $\Z$-many derived equivalences
$$\psi_k : \Db(X_\Gamma) \isoto \Db(X_\Gamma ') $$
which roughly-speaking come from family versions of the derived equivalences we saw in the $A_1$ case. Let's explain this more precisely.

\subsubsection{General theory}\label{sect.generaltheoryforwindows}
 Let $Y$ be a smooth projective-over-affine variety equipped with a $\C^*$ action, and let
$$\sigma: Z \into Y$$
be the inclusion of the fixed locus. For simplicity, assume that $Z$ is connected. To form a GIT quotient, we need to choose an equivariant line bundle on $Y$, so let's choose $\cO_Y(1)$ (i.e. the trivial line bundle equipped with a weight 1 action). Then the unstable locus is the subvariety $S_+\subset Y$ of points that flow to $Z$ under the $\C^*$ action (as $t\in \C^*$ goes to 0), so the semistable locus is $Y^{ss}:= Y - S_+$, and the GIT quotient is $[Y^{ss} / \C^*]$.\footnote{Purists may insist that the GIT quotient is really the coarse moduli space of this stack, but this abuse-of-language is becoming quite common.}

 Now let $\eta_+$ be the $\C^*$-weight of the line bundle $\det N^\vee_{S_+} Y$ along $Z$, which is necessarily a positive integer. For any choice of integer $k$, we define a `window'
$$\cW(k) \subset \Db(\quotstack{Y}{\C^*}) $$
to be the full subcategory of objects $\cE$ such that all homology sheaves of $L\sigma^* \cE$ 
have $\C^*$-weights lying in the interval
$$[k,\; k+\eta_+).$$
So for example, the equivariant line bundle $\cO_Y(i)$ lies in $\cW(k)$ if and only if $i$ lies in the above interval. In fact if $Y$ is a vector space then we may equivalently define $\cW(k)$ to be the subcategory generated by this set of line bundles, as we did in Section~\ref{sect.A1case}. A basic result of the theory cited above is that the restriction map
$$\cW(k) \to \Db(\quotstack{Y^{ss}}{\C^*}) $$
is an equivalence (for any $k$). If we change our stability condition to $\cO_Y(-1)$ then the unstable locus changes to the subvariety $S_-$ of points that flow away from $Z$ (i.e. they flow to $Z$ as $t\in\C^*$ goes to infinity), and we have a similar description of the derived category of the other GIT quotient $\quotstack{(Y - S_-)}{\C^*}$ based on the weight $\eta_-$ of $(\det N_{S_-} Y) |_Z$. In particular if we happen to have $\eta_-=\eta_+$ then the two GIT quotients are derived equivalent, and each choice of window gives a specific derived equivalence $\psi_k$.

\subsubsection{Our examples}\label{sect.wallcrossinginourexamples}
 Now let's apply this theory to the wall-crossing between the phases $X_\Gamma$ and $X_\Gamma '$. We saw in Section~\ref{sect.subvarietyGITdescrip} that both are GIT quotients of a vector space $\tilde{V}$ by a torus $\tilde{T}$. The wall between the phases is part of the annihilator of the 1-parameter subgroup $\tau_i \subset \tilde{T}$. Choose a stability condition that lies exactly on the wall (but not also on any other walls), and let $\tilde{V}^{ss}$ be the semistable locus for this stability condition. If we pick a splitting $\tilde{T} = \tau_i\oplus \tau_i^\perp$, then the torus $\tau_i^\perp$ acts freely on $\tilde{V}^{ss}$, the quotient $Y=\tilde{V}^{ss}/\tau_i^\perp$ is a smooth projective-over-affine variety, and we have an isomorphism of stacks
$$\quotstackBig{\tilde{V}^{ss}}{\tilde{T}} = \quotstackBig{Y}{\tau_i}.$$
Our phases $X_\Gamma$ and $X_\Gamma '$ are the two possible GIT quotients of $Y$ by $\tau_i$. By definition the subvariety $S_-\subset Y$ is the closure of the locus that becomes unstable when we cross the wall, so we know from Remark~\ref{rem.destabilizedSub} that $S_-$ is (the closure in $Y$ of)  $\tilde{S}_i$, our trivial family of rational curves \eqref{eq.twistSubvarieties}. Similarly $S_+$ is (the closure of) the subset that replaces $\tilde{S}_i$ after the flop: by Section~\ref{sect.familiesofrationalcurves} it's either (a) another family of rational curves, or (b) a family of orbifold points.

Now we explain the calculation of the numerical data $\eta_{\pm}$ in our examples. We can calculate in the Zariski open neighbourhood $\quotstackinline{\tilde{V}_i}{\tau_i}$, i.e. the GIT problem that constructs the neighbourhood $\tilde{N}_i$. The action of $\tau_i$ is trivial in most directions, so we can reduce to one of our two basic $A_1$ examples from Section~\ref{sect.A1case}. Then it's easy to calculate\footnote{In fact the equality of $\eta_-$ and $\eta_+$, though not their value, follows from the fact that both sides are Calabi--Yau.} that
$$\eta_- = \eta_+ = 2 $$
and so the general theory tells us that we have derived equivalences $\psi_k$ between the two phases, defined using windows $$\phantom{{}^{9}}\cW(k)\subset \Db\left(\quotstack{\tilde{V}^{ss}}{\tilde{T}}\right).$$

\subsubsection{Geometric description of equivalences}
\label{section.geom_descrip_equiv}
The family version of Proposition~\ref{prop.windowisflop} holds true \cite[Section 3.1]{HLS}, so the equivalence
$$\psi_{-1}:\Db(X_\Gamma) \to \Db(X_\Gamma ')$$
can also be described using the common birational roof. 

Similarly, the autoequivalence
$$(\psi_{-1})^{-1}\circ \psi_0 : \Db(X_\Gamma) \to \Db(X_\Gamma)$$
is a family version of the spherical twist around $\cO_{\P^1}(-1)$ that we discussed in Section~\ref{sect.A1case}.  More precisely, let
$$\begin{tikzcd} \tilde{S}_i \rar{\iota}\dar[swap]{\mu} & X_\Gamma\\ 
\C^{r} & \end{tikzcd} $$
be the trivial family of rational curves (here $r$ is either $(s-2)$ or $(s-1)$ depending on which case we're in), and let  $\cO(-1)$ be the relative tautological line bundle for $\mu$. Then we define a functor $\mathcal{S}$ with right adjoint $\mathcal{R}$, as follows:
\begin{equation*}
\begin{tikzpicture}[bend angle=15]
\node (a1) at (0,0) {$\Db(\C^r)$};
\node (a2) at (4,0) {$\Db(X_\Gamma^\sigma)$};
\draw[->,bend left] (a1) to node[above] {$\mathcal{S}$} (a2);
\draw[->,bend left] (a2) to node[below] {$\mathcal{R}$} (a1);
\end{tikzpicture}
\end{equation*}
\begin{align*}\mathcal{S}(\cE) &=\iota_* (\mu^*\cE\otimes\cO(-1))\\
\mathcal{R}(\cF ) &= \mu_*(\iota^!\cF\otimes \cO(1))\end{align*}
The functor $\mathcal{S}$ is a spherical functor in the sense of \cite{Anno,AnnoLogv}, and we can define an associated spherical twist functor
\begin{equation}\label{eqn.ST_family_twist}T_{\mathcal{S}}: \mathcal{E} \mapsto  \cone{\mathcal{S}\mathcal{R}\cE \to \cE}. \end{equation}
Recall that $\tilde{S}_i$ has a nice Zariski neighbourhood $\tilde{\cN}_i$ which is a trivial family of copies of either (a) $\cO(-1,-1)$ or (b) $\cO(-2)$ over $\P^1$. Within this neighbourhood $T_{\mathcal{S}}$ is just a trivial family of ordinary spherical twists around $\cO_{\P^1}(-1)$, and outside $\tilde{\cN}_i$ it's just the identity functor. By \cite[Section 3.2]{HLS}, we have that
$$ (\psi_{-1})^{-1}\circ \psi_0 = T_{\cS}.$$
Again this discussion holds equally well if we start our wall-crossing from any phase $X_\Gamma^\sigma$.

\section{Fayet--Iliopoulos parameter spaces}
\label{sect.FIps}
\subsection{Toric Mirror Symmetry heuristics}
\label{sect.MSheuristics}

Choose an action of a rank $r$ torus $T$ on an $n$-dimensional vector space $V$. After picking diagonal co-ordinates, the action is given by the matrix of weights
$$Q: \Z^{r} \to \Z^n. $$
We let 
$$P: \Z^{n} \to \Z^{n-r} $$
be the cokernel of $Q$ (modulo torsion). Notice that $\Z^{n}$ has a canonical positive orthant (since it comes from a vector space), so it has a canonical set of generators, up to permutation.
If we were to pick a character $\theta$ of $T$ and construct the corresponding toric variety $X = V \sslash_\theta T$, then the toric fan for $X$ lies in the lattice $\Z^{n-r}$, and the rays in the fan are (some subset of) the images of the canonical generators under $P$. 

 As we discussed in the introduction, we can also view this as the input data for a gauged linear sigma model, with gauge group $T$. Associated to the model there is a complex orbifold $\cF$, which is supposed to be the space of possible values for the complexified Fayet--Iliopoulos parameter that occurs in the Lagrangian of the model. Fortunately we don't need to understand what this means, since toric mirror symmetry provides a precise heuristic recipe for constructing $\cF$ (see for example \cite{CCIT}). We'll now describe this recipe.

Let $\Pi$ be the polytope in $\Z^{n-r}$ spanned by the images of the canonical generators for $\Z^n$. For simplicity, we assume that all these images are distinct, and that $\Pi$ contains only these $n$ lattice points and no others. Now pick $(n-r)$ formal variables $x_1,\ldots,x_{n-r}$, so each point $p=(p_1,\ldots,p_{n-r})\in \Z^{n-r}$ corresponds to a Laurent monomial $x_1^{p_1} \ldots x_{n-r}^{p_{n-r}}$, which we denote by $x^p$. We consider the set of Laurent polynomials
$$ L = \setconds{ \displaystyle \sum^{}_{p\in \Pi} \dualCoord{p} x^p }{ \dualCoord{p} \in \C } $$ 
spanned by Laurent monomials corresponding to the vertices of $\Pi$. Then $L$ is naturally the dual vector space to $V$, and it carries an action of $(\C^*)^{n-r}$ by rescaling each variable $x_1,\ldots,x_{n-r}$. The weight matrix for this $(\C^*)^{n-r}$-action is exactly $P^T$, so if we try and quotient $L$ by this action, then we are exactly looking at the dual GIT problem. 

 The FI parameter space $\cF$ is defined to be the quotient of a certain open set in $L$ by the torus $(\C^*)^{n-r}$. To obtain this open set, we remove two kinds of points from $L$, as follows:
\begin{enumerate}
\item \label{itm:hyperplanes} The hyperplanes $\set{\dualCoord{p}=0}$, for every point $p\in \Z^{n-r}$ that corresponds to a vertex of $\Pi$. In other words, we insist that our Laurent polynomials have Newton polytope exactly $\Pi$. 
\end{enumerate}
We'll call the complement of this set of hyperplanes $L'$. Every point in $L'$ has finite stabilizer, so the space 
$$\overline{\cF} := \quotstack{L'}{(\C^*)^{n-r}}$$
 is an orbifold. It's actually a (non-compact) toric orbifold, via the residual action of the torus $T^\vee$. The space $\cF$ is an open set in $\overline{\cF}$, which we obtain by removing a second kind of point:
\begin{enumerate}
\addtocounter{enumi}{1}
\item \label{itm:discriminantlocus} The \quotes{discriminant} locus of `non-generic' polynomials. Genericity here means the following: for every face $F$ of $\Pi$ (of any dimension) that doesn't contain the origin, consider the `restricted' Laurent polynomial obtained by setting to zero each coefficient that doesn't correspond to a vertex of $F$. We require that every such restricted Laurent polynomial has no critical points in the torus $(\C^*)^{n-r}$.
\end{enumerate}

The discriminant locus is the degeneracy locus of the associated GKZ system of differential equations \cite{Adolphson94}. The discriminant locus is not usually invariant under the action of $T^\vee$, so there is no torus action on $\cF$.
 
The $1$-parameter subgroups in $T^\vee$ correspond to characters $\theta$ for our original GIT problem. They're divided into chambers corresponding to the different possible GIT quotients $X$, these are the chambers of the secondary fan. For each possible GIT quotient we have a corresponding `large-radius limit' in $\cF$, which is roughly the limit of a generic point in $\overline{\cF}$ under the action of any of the 1-parameter subgroups in the corresponding chamber. If this limit exists then it will be a torus fixed point in $\overline{\cF}$, if not then we think of it as a region lying at infinity. 

\begin{rem} Although this recipe appears to correctly produce the FI parameter space, it is not enough to accurately produce the mirror family. By construction, $\cF$ is a moduli space of Laurent polynomials, and Hori--Vafa \cite{HV} argued that the mirror family is the associated family of Landau--Ginzburg models. This works in some compact examples, but is known to be only an approximation when the phases are non-compact, as happens in our examples. The actual construction of the mirrors to our examples is rather subtle, see \cite{ChanPomerleanoUeda}.
\end{rem}

\subsection{Results for our examples}
\label{sect.resultsForExamples}
Now we run the recipe of the previous section on the toric varieties that we're interested in. Fix a partition $\Gamma$ encoded by
$$\gamma:[0,k] \to [0,s], $$
and let $X_\Gamma$ be the corresponding toric variety. In Section~\ref{sect.subvarieties} we computed the full toric fan for $X_\Gamma$, in particular we described (Lemma~\ref{lem.raygenerators}) the set of generators for all the rays in the fan.  This is the same as giving the matrix $P$, so it's enough information to run the recipe and compute the corresponding FI parameter space $\cF_\Gamma$.

 Recall that the rays in the fan for $X_\Gamma$ are generated by the vectors
$$\polyVec{t}{\delta} \in \Z^{2 + (s+1)}$$
defined in \eqref{eq.polyVecDef}. Here  $t\in [0,s]$ and $\delta\in [0, N_t]$, where $N_t =\#\gamma^{-1}(t)$. The corresponding Laurent monomials are
$$\tilde{u}^\delta\tilde{v}^{N_t-\delta}\tilde{z}_t, $$
so the space $L$ consists of Laurent polynomials of the form
$$f_0(\tilde{u},\tilde{v})\tilde{z}_0 + \ldots + f_s(\tilde{u},\tilde{v})\tilde{z}_s  $$
where each $f_t$ is a homogeneous polynomial of degree $N_t$. We have a $(\C^*)^{2 +(s+1)}$ action on $L$, but there is a global $\C^*$ stabilizer which is an artifact of our decision to draw the toric fan in a lattice whose rank was 1 larger than necessary. We need to remember to neglect this global stabilizer when forming the quotient.
 
Next we have to identify which loci we should delete from $L$. Step~\ref{itm:hyperplanes} is easy, we just require that the first and last coefficients of each $f_t$ cannot go to zero, i.e. each polynomial $f_t(\tilde{u},1)$ cuts out a length $N_t$ subscheme in the punctured line $\C^*_{\tilde{u}}$.

 For step~\ref{itm:discriminantlocus}, recall \eqref{eq.prism2} that the vectors $\polyVec{t}{\delta}$ span a polytope $\tilde{\Pi}$, which is abstractly isomorphic to a prism $\Delta^s\times I$. Hence there are two kinds of faces of $\tilde{\Pi}$. For any subset $J\subset [0, s]$, there are subsimplices $\Delta_J\subset \Delta^s$ at either end of the prism, and there is also the prism $\Delta_{J}\times I \subset  \Delta^s\times I$.
 
 \begin{itemize}
 \item For the first kind of face, the corresponding restricted Laurent polynomial is of the form
$$ \sum_{t \in J} \lambda_t \tilde{u}^{N_t} \tilde{z}_t \hspace{1cm}\mbox{or}\hspace{1cm}  \sum_{t \in J} \mu_t \tilde{v}^{N_t} \tilde{z}_t, $$ 
and these never have critical points on the torus.
\item For the second kind of face, the corresponding restricted Laurent polynomial is
$$ \sum_{t \in J} f_t(\tilde{u},\tilde{v}) \tilde{z}_t.  $$
Now we have a non-trivial condition, because this Laurent polynomial has a critical point in the torus if and only if the corresponding complete intersection 
$$\set{f_t(\tilde{u},1) = 0, \; \forall t\in J} \subset \C^*_{\tilde{u}} $$
is non-generic. When $\#J =1$, this gives us the condition that no single $f_t$ can have a repeated root. When $\#J\geq 2$, we get the condition that no two of the $f_t$ can have a shared root.
\end{itemize}

In summary, we have the following proposition.
\begin{prop}$$\cF_\Gamma = \set{ (f_0,\ldots,f_s) } \;/\; (\C^*)^{s+2} $$
where each $f_t$ is a homogeneous polynomial of degree $N_t$ in $\tilde{u}$ and $\tilde{v}$ such that
\begin{itemize}
\item no $f_t$ has roots at $\tilde{u}=0$ or $\tilde{v}=0$,
\item no $f_t$ has repeated roots, and
\item no two of the $f_t$ share a root.
\end{itemize}
The torus acts by rescaling $\tilde{u}$ and $\tilde{v}$, and each $f_t$.
\end{prop}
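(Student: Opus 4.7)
The proposition is essentially a packaging result: the recipe of Section~\ref{sect.MSheuristics} has already been unpacked in the paragraphs immediately preceding the statement, and what remains is to assemble the pieces cleanly. My plan is to run the recipe step-by-step against the toric data for $X_\Gamma$ described by Lemma~\ref{lem.raygenerators}, and read off each of the three constraints in turn.

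First I would fix the ambient vector space $L$. By Lemma~\ref{lem.raygenerators} the rays of the fan are generated precisely by the vectors $\polyVec{t}{\delta}$ of \eqref{eq.polyVecDef}, so $\tilde{\Pi}$ has exactly the stated lattice points and no extras (this is required to apply the recipe). Each $\polyVec{t}{\delta}$ contributes a monomial $\tilde{u}^{\delta}\tilde{v}^{N_t-\delta}\tilde{z}_t$; grouping by $t$ gives the form $\sum_{t=0}^{s} f_t(\tilde{u},\tilde{v})\tilde{z}_t$ with $\deg f_t = N_t$, which establishes both the shape of a point in $L$ and the torus $(\C^*)^{2+(s+1)}$ acting on $L$ (rescaling $\tilde{u}$, $\tilde{v}$, and each $\tilde{z}_t$). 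One then quotients by the global $\C^*$ stabilizer coming from the fact that $\tilde{\Pi}$ lies in an affine hyperplane (the Calabi--Yau condition), leaving the effective $(\C^*)^{s+2}$ action claimed in the statement.

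Next I would handle the two deletions. Step~\ref{itm:hyperplanes} says the coefficient of every vertex monomial must be nonzero; the vertices of $\tilde{\Pi}$ are exactly the $\polyVec{t}{0}$ and $\polyVec{t}{N_t}$, whose monomials are the leading and trailing coefficients of $f_t$ in $\tilde{u}$, so this translates to $f_t$ having no root at $\tilde{u}=0$ or $\tilde{v}=0$. Step~\ref{itm:discriminantlocus} requires an enumeration of the faces of $\tilde{\Pi}\cong \Delta^s\times I$ not containing the origin; these come in two flavors. The $\Delta_J$-type caps yield restricted Laurent polynomials that are monomial in $\tilde{u},\tilde{v}$ (up to a linear combination in $\tilde{z}_t$) and hence manifestly have no critical points on the torus. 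The $\Delta_J\times I$-type prism faces yield $\sum_{t\in J}f_t(\tilde{u},\tilde{v})\tilde{z}_t$; using the Jacobian criterion on this restricted polynomial with respect to $(\tilde{u},\tilde{z}_t : t\in J)$ shows that a critical point in the torus exists iff the intersection $\{f_t(\tilde{u},1)=0\}_{t\in J}$ is non-transverse in $\C^*_{\tilde{u}}$. For $|J|=1$ this is the statement that $f_t$ has a repeated root, and for $|J|\geq 2$ it is the statement that some pair $f_t,f_{t'}$ shares a root.

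Combining the two steps gives the three bulleted conditions exactly. The one point requiring care is the transversality computation in the $\Delta_J\times I$ case: I would verify it by writing out the partial derivatives explicitly, noting that $\partial/\partial \tilde{z}_t = f_t(\tilde{u},1)$ at $\tilde{v}=1$ while $\partial/\partial\tilde{u} = \sum_{t\in J}\tilde{z}_t f'_t(\tilde{u},1)$, so a torus critical point forces a common root of the $f_t$ which is also a critical point of at least one of them --- equivalently, a failure of the genericity conditions stated. No deeper obstacle arises; the proposition is a direct transcription of the GKZ recipe into the combinatorics of the prism $\tilde{\Pi}$.
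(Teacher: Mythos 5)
Your argument is correct and is essentially the paper's own proof: the proposition there simply packages the discussion immediately preceding it, which identifies $L$ via the ray generators $\polyVec{t}{\delta}$ as polynomials $\sum_t f_t(\tilde{u},\tilde{v})\tilde{z}_t$, translates step (1) into nonvanishing of the leading and trailing coefficients of each $f_t$, and splits step (2) over the two types of faces of the prism $\tilde{\Pi}\cong\Delta^s\times I$ exactly as you do. The only slip is in your verification sentence: for $\#J\geq 2$ a torus critical point does \emph{not} force the common root to be a critical point of some individual $f_t$ (the equation $\sum_{t\in J}\tilde{z}_t f_t'(\tilde{u},1)=0$ only kills a weighted combination of the derivatives), but since a common root of two of the $f_t$ already violates the third bullet, the conclusion — and the equivalence with the stated genericity conditions — is unaffected.
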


Now consider the special case when we choose the finest possible partition $\Gamma_{\fin}$, so that each $N_t=1$. In this case we can replace each linear function $f_t$ with its root $\zeta_t\in \C^*_{\tilde{u}}$, so we have (using the abbreviation $\cF_{\fin}:=\cF_{\Gamma_{\fin}}$)
$$\cF_{\fin} = \set{(\zeta_0\!:\!\ldots\!:\!\zeta_k)} \;\;\subset\;\; \P^k $$
where the $\zeta_t$ are all distinct and non-zero.

At the opposite extreme, suppose we choose the coarsest possible partition $\Gamma_{\crs}$. This has only one piece, so $s=0$ and $N_0=k+1$. In this case, we can replace the single $f_0$ with its set of roots $\set{\zeta_0,\ldots,\zeta_k}\subset \C^*_{\tilde{u}}$. Writing $\cF_{\crs}:=\cF_{\Gamma_{\crs}}$, we then have 
$$\cF_{\crs} = \quotstackBig{\set{(\zeta_0\!:\!\ldots\!:\!\zeta_k)}}{S_{k+1}}  \;\;\subset\;\; \quotstackBig{\P^k}{S_{k+1}} $$
where again the $\zeta_t$ are all distinct and non-zero. So $\cF_{\crs}$ is a quotient of $\cF_{\fin}$  by the action of the symmetric group.

 Finally, suppose that $\Gamma$ is some intermediate partition. We can identify each $f_t$ with its set of roots, and so get a set of distinct non-zero roots $(\zeta_0,\ldots,\zeta_k)$. However, this set of roots is partitioned by $\Gamma$, and we allow relabelings that preserve the partition. In other words, we have
$$\cF_\Gamma =\quotstack{\cF_{\fin}}{S_\Gamma}$$
where $S_\Gamma$ is the Young subgroup
$S_\Gamma = S_{N_0}\times \ldots\times S_{N_s} \subset S_{k+1} $
that fixes the partition $\Gamma$.

\subsection{Large-radius limits}
\label{sect.LRLs}
We now identify some of the large-radius limits in the spaces $\cF_\Gamma$. Pick a 1-parameter subgroup of $T^\vee$ corresponding to a character $\theta \in (\Z^{k+1})^\vee$ with a lift $\vartheta\in (\Z^{2(k+1)})^\vee$  lying in the standard chamber \eqref{standardtheta}. This acts on the space 
$\cF_{\fin}$ by 
$$\theta(\lambda): \zeta_i \mapsto \lambda^{\vartheta_i} \zeta_i, $$
so the corresponding `region at infinity' in $\cF_{\fin}$ is the locus where
\begin{equation}\label{standardLRlimit}
\log|\zeta_0|\gg\log |\zeta_1|\gg \ldots\gg \log|\zeta_k|.
\end{equation}
The other large-radius limits in $\cF_{\fin}$ are of the same form, but with the order of the roots permuted by some $\sigma \in S_{k+1}$.

Next we look at $\cF_{\crs}$, where we have a single polynomial $f_0$. A 1-parameter subgroup $\tilde{\theta}$ in the standard chamber is induced from a standard $\theta$ as explained in Section~\ref{sect.subvarietyGITdescrip}, and it acts on the polynomial $f_0$ by rescaling the coefficient of $\tilde{u}^{k+1-\delta}\tilde{v}^{\delta}\tilde{z}_0$ with weight 
$$ \tilde{\vartheta}(\polyVec{0}{\delta}) = \vartheta_0+\vartheta_1 +\ldots+ \vartheta_{\delta-1}$$
(see \eqref{eqn.tildevartheta}). Therefore it acts on the roots of $f_0$ by rescaling them with weights $\vartheta_0,\ldots, \vartheta_k$, and so the standard LR limit in $\cF_{\crs}$ is again the region where
$$\log|\zeta_0|\gg\log |\zeta_1|\gg \ldots\gg \log |\zeta_k| $$
(here the labelling of the roots is arbitrary, so we can choose to label them according to size). So the standard LR limit in $\cF_{\crs}$ is the common image of all the LR limits in $\cF_{\fin}$, exactly as our heuristic picture suggests.

Now leave the standard chamber by crossing the wall $\tilde{\theta}(\tau_i) = \vartheta_{i-1}-\vartheta_i =0$, i.e. violate the inequality
$$ 2 \tilde{\vartheta}(\polyVec{0}{i}) - \tilde{\vartheta}(\polyVec{0}{i+1}) - \tilde{\vartheta}(\polyVec{0}{i-1}) >0 $$
while preserving all the other such inequalities. The corresponding LR limit in $\cF_{\crs}$ is the region where
$$ \log |\zeta_0|\gg  \ldots\gg \log |\zeta_{i-1}| \approx\log  |\zeta_{i}|\gg \ldots\gg\log  |\zeta_k| $$
and
$$ \log |\zeta_{i-1} + \zeta_i | \ll \log |\zeta_{i}|.$$ 
These are the LR limits which are `adjacent' to the standard one, if we go further into the parameter space (i.e. cross further walls in the secondary fan) we reach other limits where more roots become commensurable.

 Now let $\Gamma$ be an arbitrary partition, and $\cF_\Gamma$ the corresponding parameter space. From the discussion above, the standard LR limit in $\cF_\Gamma$ is the image of the standard LR limit in $\cF_{\fin}$ (and all LR limits obtained from the standard one under the action of $S_\Gamma$), as the heuristic picture suggests. When we leave the standard chamber by crossing the wall $\tilde{\theta}(\tau_i)=0$, there are two possibilities:
\begin{list}{(\alph{enumi})}{\usecounter{enumi}}
\item If $i-1$ and $i$ are in distinct pieces of $\Gamma$, then we move to a LR limit in $\cF_\Gamma$ which is the image of a LR limit in $\cF_{\fin}$. It's obtained from the standard limit by transposing $i-1$ and $i$.
\item If $i-1$ and $i$ are in the same piece of $\Gamma$, then we move to a LR limit in $\cF_\Gamma$ where the two roots $\zeta_{i-1}$ and $\zeta_i$ of the polynomial $f_{\gamma(i)}$ have comparable sizes.
\end{list}

The phase corresponding to this new LR limit was discussed in Section~\ref{sect.familiesofrationalcurves}.

\subsection{Fundamental groups}
\label{sect.fundamentalgroups}
In this section we make some simple observations on the fundamental groups of the FI parameter spaces computed in the previous section. These spaces are orbifolds, and the symbol $\pi_1$ will always denote the orbifold fundamental group.

  Consider the FI parameter space $\cF_{\crs}$ associated to the surface $X_{\Gamma_{\crs}}$. As was noted in \cite{CCIT}, it has fundamental group
$$\pi_1(\cF_{\crs}) = \tilde{B}_{k+1} \rtimes C_{k+1}, $$
where $\tilde{B}_{k+1}$ is the affine braid group associated to the affine Dynkin diagram $\tilde{A}_k$, and $C_{k+1}$ is a cyclic group. We should explain this briefly: by rescaling we can insist that $\zeta_0\zeta_1\ldots\zeta_k = 1$, and this leaves a residual action of the cyclic group $C_{k+1}$. So if 
$$\mathcal{G} = \setconds{\zeta_0,\ldots,\zeta_k}{ \prod_i \zeta_i = 1 } $$
with all the $\zeta_i$ distinct, then $\cF_{\crs}= [\mathcal{G}\,/\,S_{k+1}\times C_{k+1}]$. By taking $\log$s of all the $\zeta_i$ we get a principle $\Z^k$-bundle over $\mathcal{G}$ given by
$$\overline{\mathcal{G}} = \setconds{ w_0,\ldots,w_k }{ \sum_i w_i=0,\, w_i-w_j\notin \Z \mbox{ for } i\neq j}, $$
and this is the complement of the affine complex hyperplane arrangement associated to $\tilde{A}_k$. When we quotient by the affine Coxeter group $\Z^k\rtimes S_{k+1}$, we get
$$\pi_1(\mathcal{G}\,/\,S_{k+1}) = \tilde{B}_{k+1},$$
and so $\pi_1(\cF_{\crs})$ is $\tilde{B}_{k+1} \rtimes C_{k+1}$, as claimed.

 We want to take a slightly different point of view on this group. Choose a point in $\cF_{\crs}$ which is the orbit of a tuple $(\zeta_0,\zeta_1,\ldots,\zeta_k)$ with
$$\log  |\zeta_0|  \gg \ldots \gg \log |\zeta_k|.  $$
We can think of such a point as lying `near the large-radius limit'  for the standard phase. If we stay in this region, we can only see a subgroup of $\pi_1(\cF_{\crs})$ which is a lattice $\Z^k = \Z^{k+1}/\Z$ coming from rotating the phases of the $\zeta_i$. 
 
Alternatively, we could leave this large-radius limit, but insist that we stay in (the orbit of) the region where $\Re(\zeta_i)<0$ for all $i$. The subgroup of $\pi_1(\cF_{\crs})$ contributed by this region is the ordinary braid group $B_{k+1}$. These two subgroups generate $\pi_1(\cF_{\crs})$, indeed we can view it as a semidirect product
$$\pi_1(\cF_{\crs}) = \langle \Z^k \rangle \rtimes B_{k+1},$$
where $\langle \Z^k \rangle$ is the normal closure of $\Z^k$.

 From this, it's easy to deduce $\pi_1(\cF_\Gamma)$ for any other partition $\Gamma$. The inclusion of $\cF_{\crs}$ into $[\P^k / S_{k+1}]$ gives a map from $\pi_1(\cF_{\crs})$ to $S_{k+1}$, and $\pi_1(\cF_\Gamma)$ is the fibre product of  $\pi_1(\cF_{\crs})$ with the Young subgroup $S_\Gamma$. So it's generated by a lattice $\Z^k$, and the mixed braid group $B_\Gamma$, which by definition \eqref{eqn.mixed_braid_defn} is the fibre product
$$B_\Gamma:= B_{k+1}\times_{S_{k+1}} S_\Gamma.$$

\section{Mixed braid group actions}
\label{sect.mixedbraidgroupactions}

For each $\Gamma$, we wish to produce an action of $\pi_1(\cF_\Gamma)$ on the derived category $\Db(X_\Gamma)$. The FI parameter space heuristics give a precise prediction for what this action should be, as we now explain.

\subsection{Heuristics for the generators}\label{sect.heuristicsforthegenerators}

Pick a partition $\Gamma$, and pick a base point in $\cF_\Gamma$ which is close to the large-radius limit for the standard phase, i.e. it is the orbit of a tuple $(\zeta_0,\ldots,\zeta_k)$ where
$$\log |\zeta_0|\gg \ldots\gg \log |\zeta_k|. $$
Recall from Section~\ref{sect.fundamentalgroups} that $\pi_1(\cF_\Gamma)$ is generated by two subgroups: a lattice $\Z^k$ arising from rotating the phases of the $\zeta_i$, and the mixed braid group $B_\Gamma$.  The action of the lattice on $\Db(X_\Gamma)$ is obvious,  we can canonically identify this lattice with the set of toric line bundles on $X_\Gamma$, and these act on the derived category by the tensor product. Consequently we will ignore the lattice in all of the subsequent discussion, and focus on the mixed braid group $B_\Gamma$. This means we only consider the region in $\cF_\Gamma$ which is the orbit of the set $$\{\Re(\zeta_i)< 0, \forall i\}.$$  Let's denote this region by $\cF_\Gamma^{<0}$.

  If we let $\Gamma$ vary over all partitions (of size $k+1$), then the spaces $\cF_\Gamma^{<0}$ form a poset of covering spaces, with $\cF_{\fin}^{<0}$ at the top and $\cF_{\crs}^{<0}$ at the bottom. Let's apply some completely elementary algebraic topology to this situation. Pick a base point $b\in \cF_{\crs}$ which is close to the standard LR limit. For any $\Gamma$, the preimage of $b$ in $\cF_\Gamma^{<0}$ is some finite set of points, which we can identify with the set of cosets 
$$ \pi_1(\cF_{\crs}^{<0}) \,/\, \pi_1(\cF_\Gamma^{<0}) = B_{k+1}\,/\, B_\Gamma = S_{k+1}\,/\,S_\Gamma $$
by matching the identity coset with the point near the standard LR limit in $\cF_\Gamma^{<0}$.

Let $\cG_\Gamma$ be the groupoid of homotopy classes of paths in $\cF_\Gamma^{<0}$ that start and end somewhere in this set of points. This is the same thing as the action groupoid for the action of $B_{k+1}$ on this set of cosets. The isotropy group in $\cG_\Gamma$ is of course $B_\Gamma$. If we let $\Gamma$ vary we get a set of groupoids, and the covering maps induce functors between them which are faithful, and also surjective on the total sets of arrows. 

Every point $\sigma S_\Gamma$ in $\cG_\Gamma$ lies near a LR limit in $\cF_\Gamma$, and there is a corresponding phase $X_\Gamma^\sigma$ of the GIT problem. If we move along a path in the groupoid to another point $\sigma' S_\Gamma$, the FI parameter space heuristics predict that we get an associated derived equivalence
$$\Db(X_\Gamma^\sigma) \isoto \Db(X_\Gamma^{\sigma'}).$$
Putting these together, we expect to get a functor
$$T: \cG_\Gamma \to \Catni $$
sending $\sigma S_\Gamma$ to $\Db(X_\Gamma^\sigma)$. Here $\Catni$ denotes the category of categories with morphisms being functors up to natural isomorphism.\footnote{This is the natural 1-category associated to the 2-category of categories.}

 The FI parameter space heuristics can also tell us exactly what $T$ should do to arrows. To explain this, we need to identify some generators for $G_\Gamma$. 

Firstly set $\Gamma=\Gamma_{\fin}$. The resulting groupoid $\cG_{\fin}$ is very easy to describe: its arrows are ordinary braid diagrams but with the strands labelled from $0$ to $k$ in some order. Pick a point $\sigma\in S_{k+1}$ in this groupoid, and suppose that $i$ and $j$ are adjacent labels at this point (i.e. $i$ and $j$ are adjacent after applying $\sigma$ to $[0,k]$), with $i$ to the left of $j$. Let $t^\sigma_{i, j}$ be the braid which crosses these two adjacent strands (left-over-right, as in Figure~\ref{fig.mixedbraid}) and leaves all the other strands alone. There is a corresponding path in $\cF_{\fin}$, starting in the LR limit
\begin{equation}\label{eqn.LRlimitsigma} \log |\zeta_{\sigma(0)}|\gg \ldots\gg\log |\zeta_i|\gg\log|\zeta_j|\gg\ldots \gg\log|\zeta_{\sigma(k)}| \end{equation}
and ending in the LR limit
$$ \log |\zeta_{\sigma(0)}|\gg \ldots\gg\log |\zeta_j|\gg\log|\zeta_i|\gg\ldots \gg\log|\zeta_{\sigma(k)}| $$
where only $\zeta_i$ and $\zeta_j$ change in value along the path, their norms change monotonically, and we have $\arg(\zeta_i)>\arg(\zeta_j)$ at the point where their norms are equal. 

 The set of all these braids (or paths) $t^\sigma_{i,j}$, for all $\sigma$, evidently generates the groupoid $\cG_{\fin}$. Therefore for any partition $\Gamma$, their images under the map
$$\cG_{\fin}\to \cG_\Gamma$$
generate $\cG_\Gamma$, since the map is surjective on the total set of arrows.  We'll denote the image of $t^\sigma_{i,j}$ by $t^{\sigma S_\Gamma}_{i,j}$, since it only depends on the coset $\sigma S_\Gamma$.  When $\Gamma = \Gamma_{\crs}$ we just have the standard generators for $\cG_{\crs}=B_{k+1}$, indeed  the set $t^{\sigma S_\Gamma}_{i,j}$ are just the preimages of these standard generators.

 Now fix an arbitrary $\Gamma$ again, and let 
$$t^{\sigma S_\Gamma}_{i,j}: \sigma S_\Gamma  \to \sigma' S_\Gamma $$
be one of these generating arrows, where $\sigma'$ is the composition $\sigma\cdot(ij)$. Let's describe the corresponding path in $\cF_\Gamma^{<0}$, and deduce the derived equivalence that it should correspond to. There are two cases to consider:

\begin{list}{(\alph{enumi})}{\usecounter{enumi}}
\item \label{casea} If $i$ and $j$ don't lie in the same part of $\Gamma$ then this path moves us from one LR limit to a neighbouring LR limit, and the two phases $X_\Gamma^\sigma$ and $X_\Gamma^{\sigma'}$ are related by a flop of a trivial family of rational $(-1,-1)$-curves, as described in Section~\ref{sect.familiesofrationalcurves}. The path $t^{\sigma S_\Gamma}_{i,j}$ should correspond to the derived equivalence
$$T^{\sigma S_\Gamma}_{i,j} := \psi_0 : \Db(X_\Gamma^\sigma) \isoto \Db(X_\Gamma^{\sigma'}) $$
discussed in Section~\ref{sect.derivedequivalencesfromwalls}, which as we saw can be defined either using VGIT and `windows', or geometrically using the birational roof.

\item \label{caseb} If $i$ and $j$ lie in the same part of $\Gamma$ then $\sigma S_\Gamma$ and $\sigma' S_\Gamma$ are the same coset, and the path $t^{\sigma S_\Gamma}_{i,j}$ is actually a loop. We
start at the LR limit \eqref{eqn.LRlimitsigma}, move into the neighbouring LR limit where $|\zeta_i|\approx |\zeta_j|$, and return, having looped the discriminant locus $\zeta_i=\zeta_j$. The phase $X_\Gamma^\sigma$ contains a trivial family of rational $(-2)$-curves. When we pass to the neighbouring LR limit this family gets `flopped' and becomes a family of $\Z_2$-orbifold points. Consequently this loop in $\cF_\Gamma^{<0}$ should correspond to the autoequivalence
$$ T^{\sigma S_\Gamma}_{i,j}:= (\psi_{-1})^{-1}\circ \psi_0: \Db(X_\Gamma^\sigma) \isoto \Db(X_\Gamma^{\sigma}) $$
discussed in Section~\ref{sect.derivedequivalencesfromwalls}, which is equal to a family spherical twist around the family of rational curves.

\end{list}

If we let $\cG_\Gamma^{\free}$ be the free groupoid  generated by the arrows $t^{\sigma S_\Gamma}_{i,j}$, then it's tautological that this assignment
$$ T_\Gamma: t^{\sigma S_\Gamma}_{i,j} \to T^{\sigma S_\Gamma}_{i,j} $$
defines a functor from $\cG_\Gamma^{\free}$ to $\Catni$. In agreement with the FI parameter space heuristics, we will show that in fact we have:

\begin{thm}\label{thm.mainthm} $T_\Gamma$ gives a well-defined functor
$$T_\Gamma: \cG_\Gamma \to  \Catni.$$
 \end{thm}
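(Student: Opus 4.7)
The plan is to present $\cG_\Gamma$ by generators and relations and verify that $T_\Gamma$ respects every relation. The generators are the $t^{\sigma S_\Gamma}_{i,j}$, and since the covering $\cG_{\fin}\to\cG_\Gamma$ is surjective on arrows, a defining set of relations in $\cG_\Gamma$ is inherited from $\cG_{\fin}$: namely the standard commutation and braid relations among the $t^\sigma_{i,j}$, pushed forward. Once pushed down, some of these relations involve case~(b) arrows, since a case~(a) generator of $\cG_{\fin}$ between two cosets $\sigma S_\Gamma$ and $\sigma' S_\Gamma$ that happen to coincide becomes a case~(b) loop; the assignment $T^{\sigma S_\Gamma}_{i,j}=(\psi_{-1})^{-1}\circ\psi_0$ has been set up precisely so that this loop is sent to the corresponding family spherical twist, so there is nothing extra to check on these arrows individually.

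I would treat all the relations uniformly via the window formalism of Section~\ref{sect.derivedequivalencesfromwalls}. Each generator $T^{\sigma S_\Gamma}_{i,j}=\psi_0$ is a zigzag $\iota_{\sigma'}^*\circ(\iota_\sigma^*)^{-1}$ through a window $\cW\subset\Db(\quotstack{\tilde V}{\tilde T})$ cut out by linear inequalities on the $\tilde T$-weights of line bundles. A composition of $n$ generators can then be computed by finding a \emph{common refinement}: a single window $\cW'\subset\cW_1\cap\cdots\cap\cW_n$ whose restriction $\iota^*_{\sigma_\ell}$ to every intermediate phase $X_\Gamma^{\sigma_\ell}$ is an equivalence. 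If such a $\cW'$ exists then both sides of a proposed relation are realised by the identity endofunctor of $\cW'$ followed by a fixed pair of restrictions to the endpoints, and the relation holds tautologically.

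For the commutation relations this refinement step is essentially automatic: when the two walls involved affect disjoint sets of rays in the toric fans of Section~\ref{sect.subvarieties}, the weight-conditions defining the two windows involve disjoint coordinates, so their intersection is itself a window that simultaneously satisfies all four chamber conditions.

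The main obstacle is the braid relations, and the plan is to construct a \emph{triangle window} $\cW_\triangle$ that is simultaneously a window for each of the six phases $X_\Gamma^\sigma$ traversed by the two sides of a braid relation $t_{i,j}\,t_{j,k}\,t_{i,j}=t_{j,k}\,t_{i,j}\,t_{j,k}$. The existence of $\cW_\triangle$ is a concrete combinatorial statement about weight-inequalities in the GIT fan, which I would prove first for the ambient variety $X=X_{\Gamma_{\fin}}$ in Section~\ref{sect.braid_relns_ambient_space} and then transfer to arbitrary $X_\Gamma$ by restriction. Because windows for $X_\Gamma$ arise as restrictions of windows for $X$, any triangle window in the ambient space yields one for $X_\Gamma$ automatically; this same observation delivers the intertwinement with restriction along $X_{\Gamma'}\into X_\Gamma$ required for Proposition~\ref{prop.groupoid_action_intertwinement}, and in particular reduces the case~(b) relations to a family version of the calculation we already did for the ambient space. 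Once $\cW_\triangle$ is in hand, both sides of the braid relation compute the same restriction isomorphism inside $\cW_\triangle$, so the relation follows immediately.
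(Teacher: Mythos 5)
Your treatment of the ambient space is essentially the paper's: your ``triangle window'' $\cW_\triangle$ is exactly the category $\cV$ of Lemma~\ref{lem.Vexists} and Proposition~\ref{prop.W0} (a single subcategory of $\Db(\cX)$, generated by three line bundles, restricting to the window $\cW^{\sigma}_{i,j}(0)$ for every wall-crossing in the hexagon), and with it the braid relation for $\Gamma_{\fin}$ is indeed tautological, as in Proposition~\ref{prop.braidrelationsonversal}. Your remark on the commutation relations is fine, and your use of the fact that the covering $\cG_{\fin}\to\cG_\Gamma$ is surjective on arrows to inherit a presentation is also how the paper reduces everything to the ambient case.

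The gap is the transfer step: ``windows for $X_\Gamma$ arise as restrictions of windows for $X$, so any triangle window in the ambient space yields one for $X_\Gamma$ automatically'' is not true as stated, and nothing in your argument supplies it. $X_\Gamma\subset X$ is cut out by the moment-map equations $\mu_i=z_i-z_{i-1}$, which are invariant but not coordinates of $V$; the GIT presentation of $X_\Gamma$ uses an entirely different vector space $\tilde{V}$ and torus $\tilde{T}$ (Section~\ref{sect.subvarietyGITdescrip}), and the windows governing its wall-crossings live in $\Db(\quotstack{\tilde{V}^{ss}}{\tilde{T}})$, which is not an open or closed substack of $\cX$, so there is no ``restriction of windows'' functor to invoke (and Remark~\ref{rem.delicate_features} warns that the existence of line-bundle-generated windows is delicate and cannot be taken for granted for a new GIT problem). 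What the paper actually does is bypass windows for general $\Gamma$ entirely: it shows each $T^{\sigma S_\Gamma}_{i,j}$ is a Fourier--Mukai kernel relative to the fibration $\mu\colon X_\Gamma^\sigma\to\C^s$, that derived restriction of relative kernels is functorial for convolution (Proposition~\ref{prop.restrictionofkernels}), and that the kernels for a coarsening are precisely the restrictions of the kernels for the finer partition (Proposition~\ref{prop.groupoid_action_intertwinement}); the braid relations then descend from $\Gamma_{\fin}$ formally. The nontrivial content of that intertwinement is exactly the point you wave away: when $i,j$ lie in different parts of $\Gamma$ but the same part of $\tilde{\Gamma}$, one must prove that the restriction of a flop equivalence is a family spherical twist, i.e.\ the family version of Proposition~\ref{prop.flopinducestwist}. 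This is a genuine computation (carried out on the local models $\tilde{\cN}_i$), not a consequence of how the case~(b) generators were defined, so your claim that ``there is nothing extra to check'' on those arrows, and that the intertwinement is ``delivered'' by the ambient window, leaves the essential step unproved. Either supply the kernel-restriction argument, or else carry out the window combinatorics directly for the $\tilde{V}\sslash\tilde{T}$ problems (including the orbifold phases arising at type~(b) walls) --- but the latter is a new verification, not an automatic transfer.
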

In the process we will show the following stronger result, for which we don't have a heuristic justification:
\begin{thm}\label{thm.faithfulness} $T_\Gamma$ is faithful.
\end{thm}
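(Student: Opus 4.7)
The plan is to reduce faithfulness of $T_\Gamma$ to the Seidel--Thomas theorem for $B_{k+1}\acts\Db(Y_0)$, via the refinement intertwinement announced as Proposition~\ref{prop.groupoid_action_intertwinement}. I take for granted Theorem~\ref{thm.mainthm}, so that $T_\Gamma$ is well-defined, together with this intertwinement: for every refinement $\Gamma'$ of $\Gamma$ and every $g\in B_{\Gamma'}\subset B_\Gamma$, the derived restriction $\rho^*:\Db(X_{\Gamma'})\to\Db(X_\Gamma)$ along the closed immersion $\rho:X_\Gamma\hookrightarrow X_{\Gamma'}$ satisfies $\rho^*\circ T_{\Gamma'}(g)\cong T_\Gamma(g)\circ\rho^*$.

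Faithfulness of the functor $T_\Gamma$ on the groupoid $\cG_\Gamma$ reduces to faithfulness of the representation $B_\Gamma\to\operatorname{Auteq}(\Db(X_\Gamma^\sigma))$, since morphism sets in $\cG_\Gamma$ are torsors under $B_\Gamma$. So suppose $g\in B_\Gamma\subset B_{k+1}$ acts trivially on $\Db(X_\Gamma)$. Since $\Gamma$ refines $\Gamma_{\crs}$, the intertwinement applied to $j:Y_0\hookrightarrow X_\Gamma$ yields $T_{\Gamma_{\crs}}(g)\circ j^*\cong j^*$. I would then use the essential image of $j^*$ to force $T_{\Gamma_{\crs}}(g)\cong\operatorname{id}$: for each closed point $p\in Y_0$, the derived pullback $j^*\cO_p\in\Db(Y_0)$ is a Koszul-type complex of shifts of $\cO_p$, so comparing indecomposable summands on both sides of $T_{\Gamma_{\crs}}(g)(j^*\cO_p)\cong j^*\cO_p$ and varying $p$, together with analogous comparisons on line bundles pulled back from toric strata of $X_\Gamma$, pins down $T_{\Gamma_{\crs}}(g)\cong\operatorname{id}$. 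Seidel--Thomas then forces $g=1$ in $B_{k+1}$, hence in $B_\Gamma$.

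The main obstacle is the intertwinement itself, particularly in the family spherical twist case (Section~\ref{sect.heuristicsforthegenerators} case~(\ref{caseb})). The window case~(\ref{casea}) is essentially automatic: a window in the GIT stack for the ambient variety restricts to a window for the smaller subvariety's GIT problem, and restriction commutes with the window-to-phase equivalence. The family twist case requires a base-change computation: the trivial family of $(-2)$-curves $\tilde S_i$ in $X_\Gamma^\sigma$ restricts, over the origin of the deformation base, to a single $(-2)$-curve in $Y_0^\sigma$, and one must verify that the associated family spherical twist restricts to the ordinary Seidel--Thomas twist on this fibre. This should follow from the general compatibility of spherical twist functors with derived base change applied to the Fourier--Mukai kernel supported on the curve locus, together with a check that the adjunction data (unit and counit) are likewise compatible.
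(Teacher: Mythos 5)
Your overall architecture matches the paper's: both proofs push everything down to the surface $Y_0=X_{\Gamma_{\crs}}$ via the coarsening restriction and then invoke the Seidel--Thomas faithfulness theorem, using the faithfulness of $\cG_\Gamma\to B_{k+1}$ to finish. The genuine difference is the level at which the comparison with the surface is made. The paper upgrades the intertwinement to an identity of Fourier--Mukai kernels: it constructs a functor $\Res:\mathbf{FM}_{\C^s}\to\mathbf{FM}_{\pt}$ (Proposition~\ref{prop.restrictionofkernels}), checks that each $T^{\sigma S_\Gamma}_{i,j}$ is a morphism in $\mathbf{FM}_{\C^s}$, and proves $\Res\bigl(T^{\sigma S_\Gamma}_{i,j}\bigr)=T^{\sigma S_{\tilde\Gamma}}_{i,j}$ (Proposition~\ref{prop.groupoid_action_intertwinement}). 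This gives a commutative square $\cG_\Gamma\to\mathbf{FM}_{\C^s}\to\mathbf{FM}_{\pt}$ against $\cG_\Gamma\to B_{k+1}\to\mathbf{FM}_{\pt}$, and faithfulness is then a two-line formality: if two braids in $\cG_\Gamma$ have isomorphic images, their images under $T_{\crs}$ agree, so Seidel--Thomas forces the braids to agree in $B_{k+1}$, hence in $\cG_\Gamma$. You instead keep only the functor-level intertwinement $j^*\circ T_\Gamma(g)\cong T_{\crs}(g)\circ j^*$, and therefore have to supply an extra argument that an autoequivalence $\Phi=T_{\crs}(g)$ of $\Db(Y_0)$ satisfying $\Phi\circ j^*\cong j^*$ is isomorphic to the identity. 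Your Koszul/Krull--Schmidt step does correctly give $\Phi(\cO_p)\cong\cO_p$ for every closed point $p\in Y_0$ (the summands of $j^*\cO_p$ are shifts of $\cO_p$, and indecomposability with local endomorphism rings lets you cancel), but the passage from ``fixes all point sheaves'' to $\Phi\cong(-)\otimes L$ is a nontrivial standard result usually stated for smooth \emph{projective} varieties; you need its quasi-projective variant (available because $\Phi$ is a Fourier--Mukai functor whose kernel differs from $\cO_\Delta$ only on the compact exceptional locus), and then the cleanest way to kill $L$ is to note $\Phi(\cO_{Y_0})=\Phi(j^*\cO_{X_\Gamma})\cong\cO_{Y_0}$, rather than the vaguer ``line bundles pulled back from toric strata.'' So your route is workable but strictly more laborious, and its extra step is only sketched; the paper's choice to work in $\mathbf{FM}_{\C^s}$ exists precisely to avoid this point-sheaf analysis (and also to make the reduction independent of kernel-uniqueness questions on non-proper varieties).

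One smaller remark: the intertwinement you worry about at the end, including the family-twist case~(\hyperref[caseb]{b}), is exactly Proposition~\ref{prop.groupoid_action_intertwinement}, proved before the faithfulness theorem by reduction to the local models $\tilde{\cN}_i$ and Proposition~\ref{prop.flopinducestwist}; so it is legitimately citable rather than an obstacle you need to resolve inside this proof. Your reduction of groupoid faithfulness to injectivity on the isotropy group $B_\Gamma$ is fine.
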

A priori this second theorem is much more difficult. However for the special case when $\Gamma=\Gamma_{\crs}$, and so $X_\Gamma$ is a surface, the faithfulness was proved by Seidel and Thomas \cite{ST}, and we can deduce the result for general $\Gamma$ fairly easily.

Restricting $T_\Gamma$ to the isotropy group of the standard phase we deduce:

\begin{cor}\label{cor.maincor} $T_\Gamma$ defines a faithful action of the mixed braid group $B_\Gamma$ on the derived category $\Db(X_\Gamma)$.\end{cor}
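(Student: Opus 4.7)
The corollary follows immediately from Theorems~\ref{thm.mainthm} and \ref{thm.faithfulness}: restricting the functor $T_\Gamma : \cG_\Gamma \to \Catni$ to the isotropy group at the base object $\mathrm{id}\cdot S_\Gamma$ gives a group homomorphism from $B_\Gamma = B_{k+1}\times_{S_{k+1}} S_\Gamma$ to the group of derived autoequivalences of $\Db(X_\Gamma)$ (modulo natural isomorphism), and faithfulness transfers directly. So the substance of the proof lies in establishing the two theorems, which I address in turn.

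For Theorem~\ref{thm.mainthm} (well-definedness), my plan is to verify that the images under $T_\Gamma$ of the defining relations of $\cG_\Gamma$ become identity natural transformations. These relations come from the standard presentation of the braid groupoid acting on the coset space $S_{k+1}/S_\Gamma$: they are the braid relations $t_i t_{i+1} t_i = t_{i+1} t_i t_{i+1}$ and the far-commutations $t_i t_j = t_j t_i$ for $|i-j|>1$. Each generator $T^{\sigma S_\Gamma}_{i,j}$ is either a window equivalence $\psi_0$ (case~(a), a flop) or a family spherical twist $(\psi_{-1})^{-1}\circ\psi_0$ (case~(b), when the two indices lie in a common part of $\Gamma$). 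Following the window approach of~\cite{HLS}, I would work inside the ambient stack $\quotstackinline{\tilde{V}}{\tilde{T}}$: a composition of wall-crossings can be realized as a single window in the ambient stack restricted across the corresponding chambers of the secondary fan. The braid relation then reduces to showing that two choices of composite window, one following the ordering $(i,i+1)(i+1,i+2)(i,i+1)$ and the other the ordering $(i+1,i+2)(i,i+1)(i+1,i+2)$, coincide; once the windows are correctly identified this becomes a combinatorial check on a generating set of equivariant line bundles. Far commutation is almost automatic, since the walls involved belong to independent rank-$1$ subtori of $\tilde{T}$.

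For Theorem~\ref{thm.faithfulness}, the strategy is to bootstrap off the deep result of Seidel and Thomas~\cite{ST} in the surface case $\Gamma=\Gamma_{\crs}$. The key tool is the intertwinement property (Proposition~\ref{prop.groupoid_action_intertwinement}), which I would prove alongside the main theorem: for $\Gamma$ a refinement of $\Gamma_{\crs}$, the action of $B_\Gamma\subset B_{k+1}$ on $\Db(X_\Gamma)$ is intertwined with the Seidel--Thomas action restricted to $B_\Gamma$, via the restriction functor $j^*:\Db(X_\Gamma)\to\Db(Y_0)$ along the inclusion $Y_0\hookrightarrow X_\Gamma$. This intertwinement itself is a consequence of the window description, since windows for $Y_0$ are obtained by restriction of windows for $X_\Gamma$, and therefore the window equivalences and family spherical twists on both sides commute with $j^*$. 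Given intertwinement, if $\phi\in B_\Gamma$ satisfies $T_\Gamma(\phi)=\id$, then $T_{\crs}(\phi)\circ j^* \simeq j^*$, so $T_{\crs}(\phi)$ fixes the essential image of $j^*$. Since $j^*$ sends toric line bundles on $X_\Gamma$ to toric line bundles on $Y_0$, and these generate $\Db(Y_0)$ for the smooth toric surface $Y_0$, we deduce $T_{\crs}(\phi)=\id$; the faithfulness of Seidel--Thomas then forces $\phi=1$.

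The main obstacle is the braid-relation verification for Theorem~\ref{thm.mainthm}. One must analyze how two separate sequences of window restrictions, involving both flops (case~(a)) and family spherical twists (case~(b)), act identically on a generating set; organizing the combinatorics so that a single uniform argument handles both cases is where the real content lies. This is exactly the analysis advertised in the introduction as Section~\ref{sect.braid_relns_ambient_space}, and is where the conceptual elegance of the window-based approach is intended to pay off.
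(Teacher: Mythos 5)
Your derivation of the corollary itself matches the paper: one restricts the (faithful) functor $T_\Gamma$ to the isotropy group $B_\Gamma$ of the standard phase. The gaps are in how you propose to establish the two theorems it rests on. For Theorem~\ref{thm.mainthm} you plan to verify the braid relations for \emph{every} $\Gamma$ by exhibiting a single ``composite window'' in $\Db(\quotstackinline{\tilde V}{\tilde T})$ and doing a combinatorial check on line bundles. The paper only carries out such a window argument for $\Gamma_{\fin}$ (Section~\ref{sect.braid_relns_ambient_space} treats the ambient space alone, where every generator in the hexagon is a case~(a) flop): there the whole point, and the non-trivial step, is Lemma~\ref{lem.Vexists}/Proposition~\ref{prop.W0}, that one fixed subcategory $\cV$, generated by three explicit line bundles, restricts to the correct window $\cW^\sigma_{i,j}(0)$ for all six wall-crossings even though the KN strata (hence the window definitions) change from phase to phase. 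Remark~\ref{rem.delicate_features} stresses that line-bundle-generated windows with correctly aligned integers are special, and for a general $\Gamma$ the relevant hexagons pass through orbifold phases; no analogue of Lemma~\ref{lem.Vexists} is proved (or claimed) there. The paper instead \emph{transfers} the relations from $\Gamma_{\fin}$ to every coarser partition by restriction of Fourier--Mukai kernels along $X_{\tilde\Gamma}\subset X_\Gamma$ (Propositions~\ref{prop.restrictionofkernels} and~\ref{prop.groupoid_action_intertwinement}). That reduction is missing from your plan for well-definedness, and without it your ``combinatorial check'' for general $\Gamma$ is exactly the unproved content, not a routine verification.

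For Theorem~\ref{thm.faithfulness}, the step from $T_{\crs}(\phi)\circ j^*\cong j^*$ to $T_{\crs}(\phi)\cong\id$ does not follow. An autoequivalence fixing, up to isomorphism, each object of a generating collection need not be isomorphic to the identity (pullback by a nontrivial automorphism of $\P^1$ fixes every line bundle, and line bundles generate); one must also control the action on morphisms, and the restriction maps $\Hom_{X_\Gamma}(\cE,\cF)\to\Hom_{Y_0}(j^*\cE,j^*\cF)$ are not surjective in general, so the natural isomorphism you have only constrains $T_{\crs}(\phi)$ on morphisms lifted from $X_\Gamma$. The paper avoids this entirely by staying at the level of kernels: the commutative square $\Res\circ T_\Gamma = T_{\crs}\circ\pi$, with $\pi\colon\cG_\Gamma\to\cG_{\crs}=B_{k+1}$ faithful and $T_{\crs}$ faithful by Seidel--Thomas, forces $T_\Gamma$ to be faithful by a two-line diagram chase; nothing needs to be ``reflected'' through $j^*$. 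Since you already invoke the kernel-level intertwinement of Proposition~\ref{prop.groupoid_action_intertwinement}, the repair is to run your faithfulness argument in $\mathbf{FM}_{\C^s}$ as the paper does, rather than with functors and essential images.
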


The proofs of these two theorems will be presented in the next two sections.

\subsection{Braid relations on the ambient space}
\label{sect.braid_relns_ambient_space}

In this section we'll prove the following special case of Theorem~\ref{thm.mainthm}:

\begin{prop}\label{prop.braidrelationsonversal} When $\Gamma=\Gamma_{\fin}$, so each phase $X_\Gamma^\sigma$ is the versal deformation space of the resolution of the $A_k$ singularity, then $T_{\fin} := T_{\Gamma_{\fin}}$  gives a well-defined functor
$$ T_{\fin} : \cG_{\fin} \to \Catni. $$
\end{prop}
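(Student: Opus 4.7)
The plan is to check that the tautological assignment $T_{\fin}: \cG_{\fin}^{\free} \to \Catni$ on generators descends to the quotient $\cG_{\fin}$. Viewing $\cG_{\fin}$ as the action groupoid of $B_{k+1}$ on $S_{k+1}$, the relations to verify fall into three families: \emph{(a)} the inverse relations $t^{\sigma(ij)}_{j,i} \circ t^\sigma_{i,j} = \id_\sigma$; \emph{(b)} distant commutations, when the transposed pairs $(i,j)$ and $(l,m)$ are disjoint; and \emph{(c)} the braid relation for three pairwise-adjacent labels $i,j,k$.

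The unifying strategy is to exploit the window description from Section~\ref{sect.wallcrossinginourexamples}: each $T^\sigma_{i,j} = \psi_0$ is built as $(\iota^{\sigma',*})^{-1} \circ \iota^{\sigma,*}$ via restriction equivalences from a window subcategory $\cW \subset \Db(\quotstackinline{\tilde{V}^{ss}}{\tilde{T}})$ that maps isomorphically onto both adjacent phases. For each relation type, the goal is to produce a \emph{single} window subcategory (possibly on a suitably enlarged stack) that restricts to an equivalence with every phase appearing in the relation. Once this is achieved, the two sides of the relation are forced to be identical compositions of restriction functors from the common window, and so agree on the nose. Relation (a) is then immediate, since $T^\sigma_{i,j}$ and $T^{\sigma(ij)}_{j,i}$ already share the same defining window. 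For (b), the two wall-crossings are governed by independent rank-$1$ subtori $\tau_{ij}, \tau_{lm} \subset \tilde{T}$, and a grade-restricted window cut out by weight conditions for each factor simultaneously will work for all four phases of the commuting square.

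The main obstacle is the braid relation (c). The three walls associated to the transpositions $(i,j)$, $(j,k)$, $(i,k)$ all meet at a common codimension-$2$ stratum $\vartheta_i = \vartheta_j = \vartheta_k$. Locally around this stratum, after splitting off trivial directions in $\tilde{T}$, the situation reduces to a rank-$2$ torus acting on a small vector space, with six phases arranged in a hexagon around a central codimension-$2$ unstable locus. Each individual wall-crossing in this local picture has window width $\eta = 2$ (from the Calabi--Yau condition, as noted in Section~\ref{sect.wallcrossinginourexamples}), and this is exactly enough room to construct a simultaneous ``magic window'' $\cW$ cut out by a $\Z^2$-weight condition that restricts to an equivalence with each of the six phases. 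The two sides of the braid relation then become equal compositions of restriction equivalences from $\cW$. The delicate step is pinning down the correct $\Z^2$-weight region and verifying that the resulting window actually realizes every phase in the hexagon; this local rank-$2$ computation---an $A_2$-type analogue of the $A_1$ windows analysis of Section~\ref{sect.A1case}---is where the real work lies.
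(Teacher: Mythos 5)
Your relation family \emph{(a)} is not a relation of $\cG_{\fin}$, and the claim you use to dispose of it is false. In the action groupoid of $B_{k+1}$ on $S_{k+1}$ the composite $t^{\sigma(ij)}_{j,i}\circ t^{\sigma}_{i,j}$ is the \emph{full twist} of the two strands, a nontrivial element of the pure braid group, so no such relation should be imposed; the presentation of $\cG_{\fin}$ by the generators $t^{\sigma}_{i,j}$ requires only the lifted commutation and braid relations. Moreover $T^{\sigma}_{i,j}$ and $T^{\sigma(ij)}_{j,i}$ do \emph{not} share a defining window: as the paper points out in the remark following the definition of the wall windows, the left-over-right crossing in the reverse direction is defined by measuring weights against the \emph{negated} one-parameter subgroup, i.e.\ by $\cW(-1)$ rather than $\cW(0)$ in a fixed orientation. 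Consequently the composite is the window shift $(\psi_{-1})^{-1}\circ\psi_0$, a family spherical twist, not the identity; if your relation (a) held for the functors, the action would factor through $S_{k+1}$, contradicting Theorem~\ref{thm.faithfulness}. This is a genuine confusion about the window conventions, although since (a) is not actually needed, the damage is that you would be attempting to verify something false rather than failing to verify something true.

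For the braid relation itself your strategy is the paper's: reduce to the rank-$2$ local model (six phases of $[\C^6/(\C^*)^2]$, Proposition~\ref{prop.hexagoncommutes}) and exhibit a single category through which all six wall-crossing equivalences factor, whereupon the hexagon commutes formally. But you defer exactly the step that constitutes the proof. Two points are essential and absent. First, it is not enough that the common window restrict to an equivalence with each phase; you need Lemma~\ref{lem.Vexists}, that its restriction to each wall stack $\cY^{\sigma}_{i,j}$ lands isomorphically on the intermediate window $\cW^{\sigma}_{i,j}(0)$ that \emph{defines} $T^{\sigma}_{i,j}$, otherwise the induced equivalences need not be the given ones. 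Second, a window ``cut out by a $\Z^2$-weight condition'' is phase-dependent: the weight conditions are imposed along the KN strata, and the loci $Z_i$ change as you move around the hexagon, so it is not obvious that the six weight-truncated subcategories coincide (this is precisely the subtlety flagged in Remark~\ref{rem.delicate_features} and the surrounding discussion). The paper resolves it by identifying the window concretely as $\cV=\langle\cO,\cO(-1,1,0),\cO(-1,0,1)\rangle\subset\Db(\cX)$ and proving, via the three Koszul-type sequences attached to the unstable strata, that $\cV=\cW(0,0,0)$ for every phase's stratification with suitably aligned parameters (Proposition~\ref{prop.W0}); the correct alignment $k_i\in\{0,-1\}$ exists only because $\eta_i=2$. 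Until you carry out this identification (or an equivalent argument), the proposal is a plan rather than a proof.
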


The key case to consider is when $k=2$. (When $k=1$, there's nothing to prove.) In this case there are six phases $X^\sigma$, indexed by elements $\sigma$ of $S_3$. We can denote the functors between them by $T^\sigma_{i,j}$, dropping the trivial group $S_{\Gamma_{\fin}}$ from the notation. Also, we'll write elements of $S_3$ as orderings of the set $\{0,1,2\}$ rather than as products of cycles, so for example the standard phase will be denoted $X^{012}$. 

A prototypical braid relation in $\cG_{\fin}$ is drawn in Figure~\ref{fig.braidrelation}, and expressed in the following diagram of functors:

\begin{prop}\label{prop.hexagoncommutes}
The hexagon below commutes.
\begin{equation}\label{eqn.hexagon}
\begin{tikzcd}
 \, & \Db(X^{102}) \arrow{r}{T_{0,2}^{102}}   &   \Db(X^{120}) \arrow{dr}{T^{120}_{1,2}} &  \, \\
 \Db(X^{012}) \arrow{ur}{T^{012}_{0,1}} \arrow{dr}[swap]{T^{012}_{1,2}}& \, &  \,  &  \Db(X^{210})  \\
 \,&  \Db(X^{021}) \rar[swap]{T^{021}_{0,2}}   &   \Db(X^{201}) \urar[swap]{T^{201}_{0,1}} &  \,
\end{tikzcd}
 \end{equation}
\end{prop}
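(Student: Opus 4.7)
My strategy is to realize both compositions around the hexagon as restrictions from a single subcategory of $\Db(\quotstack{V}{T})$, in the spirit of Section~\ref{sect.generaltheoryforwindows}.

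Each wall-crossing equivalence $T^\sigma_{i,j} = \psi_0$ factors, by construction, as $\iota^*_{\sigma'}\circ(\iota^*_\sigma)^{-1}$ through a window subcategory $\cW^\sigma_{i,j} \subset \Db(\quotstack{V}{T})$, cut out by a weight condition on the $1$-parameter subgroup perpendicular to the wall (an interval of length $\eta_\pm = 2$). The crucial observation is that the top and bottom paths of the hexagon cross \emph{the same three walls} --- those corresponding to the transpositions $(0\,1)$, $(0\,2)$, and $(1\,2)$ --- reflecting the braid relation $s_1 s_2 s_1 = s_2 s_1 s_2$ that expresses $210$ as a product of three simple transpositions in $S_3$. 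Intersecting the three single-wall window conditions yields a candidate ``triple window'' $\cW^{\mathrm{big}} \subset \Db(\quotstack{V}{T})$, which serves both paths simultaneously.

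The first step of the proof is to verify that the restriction $\iota_\sigma^*: \cW^{\mathrm{big}} \to \Db(X^\sigma)$ is an equivalence for each of the six phases $\sigma \in S_3$ appearing in the hexagon. Concretely, $\cW^{\mathrm{big}}$ is the full subcategory generated by equivariant line bundles $\cO(\mathbf{a})$ whose $T$-weight $\mathbf{a}$ satisfies three simultaneous strip conditions; I would check that the resulting collection descends to a generating set of $\Db(X^\sigma)$ for each $\sigma$. The Calabi--Yau condition forces $\eta_- = \eta_+ = 2$ at each wall, and since the effective rank of $T$ here is only $2$, this is a concrete combinatorial verification that the intersection of the three strips contains the right number of weights. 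I would exploit the $S_3$-symmetry permuting the six phases to reduce the number of cases, and examine the toric fans from Section~\ref{sect.toriccalculations} to identify the generating line bundles explicitly.

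Once this is established, the second step is tautological: for each single wall-crossing along either path, $\cW^{\mathrm{big}} \subset \cW^\sigma_{i,j}$, so restriction from $\cW^{\mathrm{big}}$ realizes $T^\sigma_{i,j}$ as an intermediate factor. Chaining three such restrictions, both compositions around the hexagon reduce to the single composition $\iota^*_{210} \circ (\iota^*_{012})^{-1}$ factoring through $\cW^{\mathrm{big}}$, and hence agree. The main obstacle is thus the first step --- producing a single set of strip constraints that simultaneously yields an equivalence onto all six phases --- but the Calabi--Yau symmetry between $\eta_\pm$ and the small lattice rank give ample flexibility for such a choice to exist.
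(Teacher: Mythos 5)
Your overall architecture is the same as the paper's: realize all six wall-crossing functors as restrictions from a single subcategory of $\Db(\cX)$, so that commutativity of the hexagon becomes formal (this is exactly the role of Lemma~\ref{lem.Vexists} and the two-line proof of Proposition~\ref{prop.hexagoncommutes} that follows from it). The gap is in the step you defer: the existence of the ``triple window'' is the entire content of the argument, and as stated your construction of it is not well-defined and your confidence that ``ample flexibility'' guarantees it is misplaced. First, a multi-stratum window condition is imposed on the restrictions to the KN strata $Z_i$, and these strata (unlike their closures $\bar{Z}_i$) change from phase to phase, because each $Z_i$ is defined by deleting the earlier strata \eqref{eqn.KNstrata}; so ``intersect the three single-wall strip conditions'' does not specify a phase-independent subcategory of $\Db(\cX)$. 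Second, the offsets and signs of the strips must be aligned per phase: at each wall the weight is measured against $\pm\tau_i$ depending on the sign of $\theta(\tau_i)$, and the interval must be chosen as $[0,2)$ or $[-1,1)$ accordingly (the $k^\sigma_i\in\{0,-1\}$ bookkeeping in the proof of Lemma~\ref{lem.Vexists}); if the alignment is off at even one of the six walls, your common category restricts into some $\cW^\sigma_{i,j}(k)$ with $k\neq 0$ and the hexagon you prove is the stated one only up to line-bundle twists. Far from there being flexibility, Remark~\ref{rem.delicate_features} points out that for most parameter choices the window contains no line bundles at all, so the correct choice is essentially rigid.

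Third, your verification plan (``check the line bundles generate each phase'') only addresses essential surjectivity; to get an equivalence you also need full faithfulness of restriction, which does not follow from generation but from first knowing that the candidate generators lie in a bona fide window for each of the six phases. The paper's resolution of all three points is Proposition~\ref{prop.W0}: identify the category as $\cV=\langle\cO,\,\cO(-1,1,0),\,\cO(-1,0,1)\rangle$, check these weights sit in the aligned strips (giving full faithfulness for every phase simultaneously, despite the changing strata), and prove essential surjectivity by the three Koszul-type short exact sequences attached to the unstable strata. Without carrying out this identification and the parameter bookkeeping, your proposal is a correct strategy outline rather than a proof; with them, it coincides with the paper's argument.
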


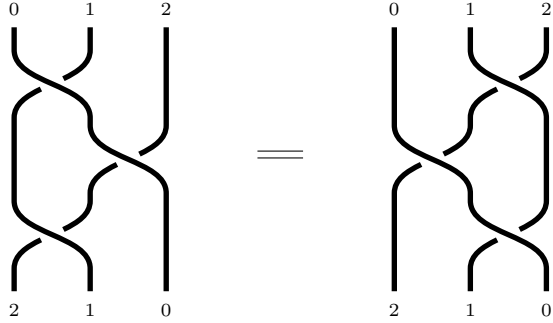
\begin{figure}[h]
\begin{center}
\begin{tikzpicture}
	\braid[number of strands=3,line width=2pt] (B1) s_1 s_2 s_1;
	\node[at=(B1-1-s), yshift=7pt] {\scriptsize 0};
          \node[at=(B1-2-s),  yshift=7pt] {\scriptsize 1};
	\node[at=(B1-3-s),  yshift=7pt] {\scriptsize 2};
	\node[at=(B1-1-e), yshift=-7pt] {\scriptsize 0};
          \node[at=(B1-2-e),  yshift=-7pt] {\scriptsize 1};
	\node[at=(B1-3-e),  yshift=-7pt] {\scriptsize 2};
\draw (4.2,-48+1.5*\sepForEquals) -- (4.8,-48+1.5*\sepForEquals);
\draw (4.2,-48-1.5*\sepForEquals) -- (4.8,-48-1.5*\sepForEquals);
	\braid[number of strands=3,line width=2pt] (B2) at (6,0) s_2 s_1 s_2;
	\node[at=(B2-1-s), yshift=7pt] {\scriptsize 0};
          \node[at=(B2-2-s),  yshift=7pt] {\scriptsize 1};
	\node[at=(B2-3-s),  yshift=7pt] {\scriptsize 2};
	\node[at=(B2-1-e), yshift=-7pt] {\scriptsize 0};
          \node[at=(B2-2-e),  yshift=-7pt] {\scriptsize 1};
	\node[at=(B2-3-e),  yshift=-7pt] {\scriptsize 2};
\end{tikzpicture}
\end{center}
\caption{Prototypical relation in $\cG_{\fin}$ for $k=2$.}
\label{fig.braidrelation}
\end{figure}

This is a special case of Proposition~\ref{prop.braidrelationsonversal}, but it quickly implies the whole proposition:

\begin{proof}[Proof of Proposition~\ref{prop.braidrelationsonversal}] Let $k>2$. We can draw a similar hexagon moving between the standard phase $X^{0123\ldots k}$ on the left, and the phase $X^{2103\ldots k}$ on the right, and the commutativity of this hexagon is one of the braid relations. If we start with the fan for the standard phase, as described in Section~\ref{sect.toriccalculations}, then the changes to the fan that take place when we move around the hexagon will only affect the first three cones: the remaining $\Delta_3, \ldots, \Delta_k$ stay unchanged, and so the birational modifications are concentrated inside the open set $\set{a_3\neq 0,\ldots, a_k\neq 0}$. Inside this open set, the geometry we're studying is just a trivial family (over $\C^{k-2}_{b_3,\ldots,b_k}$) of copies of the geometry appearing in the $k=2$ case.  Therefore Proposition~\ref{prop.hexagoncommutes} implies that this hexagon also commutes. 

We've shown that one particular braid relation holds for each $k\geq 2$, so all the others hold by the  $S_{k+1}$ symmetry. 
\end{proof}

The rest of this section will be devoted to the proof of Proposition~\ref{prop.hexagoncommutes}, our prototype braid relation.

\subsubsection{Formal structure of the proof}
\label{sect.formalStructureOfProof}
To see why the hexagon \eqref{eqn.hexagon} commutes, we need to go back to the fundamental definition of our functors $T^{\sigma}_{i,j}$ from Section~\ref{sect.derivedequivalencesfromwalls}.

 Let  $V\cong \C^6$ and $T \cong (\C^*)^3$ be the data of the GIT problem that we're considering, and let $T' = T / \C^*$ be the quotient of $T$ by the trivially-acting diagonal subgroup.  Let $\cX$ denote the Artin stack
$$ \cX = \quotstack{ V }{ T' }. $$
Each of the six phases $X^\sigma$ of the GIT problem is an open substack in $\cX$. Let's identify these open substacks completely explicitly. 

Let $\tau_0, \tau_1$ and $\tau_2$ be the three $1$-parameter subgroups of $T$ corresponding to the nodes of the quiver, so in the quotient torus $T'$ we have $\tau_0 + \tau_1 + \tau_2=0$. These three are the only $1$-parameter subgroups which fix more than the origin in $V$, and their fixed loci are as follows:
\begin{align*} 
\tau_0 = (1,0,0) \;\mbox{ fixes }\; \bar{Z_0} = \set{a_0=b_0=a_2=b_2=0}, \\
\tau_1 = (0,1,0) \;\mbox{  fixes }\; \bar{Z_1} = \set{a_0=b_0=a_1=b_1=0}, \\
\tau_2 = (0,0,1) \;\mbox{  fixes }\; \bar{Z_2} = \set{a_1=b_1=a_2=b_2=0}.
\end{align*}
If we pick a stability condition, then each $\tau_i$ destabilizes either the attracting or the repelling subspace of the corresponding $\bar{Z_i}$, and the union of these three subspaces is precisely the unstable locus for that stability condition. For example, in the standard phase $X^{012}$ the unstable locus is the union of the three subspaces
$$\bar{S_0} = \set{b_0=a_2=0}, \hspace{1cm}
\bar{S_1} = \set{b_0=a_1=0}, \hspace{1cm}
\bar{S_2} = \set{b_1=a_2=0}. $$
(see also Figure~\ref{fig.strata}). As another example, if we move to the adjacent phase $X^{021}$ then only $\bar{S_2}$ changes: it gets replaced by the subspace $\bar{S'_2} = \set{a_1=b_2=0}$.

\begin{figure}[h]
\begin{center}
\begin{tikzpicture}[>=stealth,scale=1.5,node distance=10]
\quiver{->}{->}{dotarrow}{->}{->}{dotarrow}{0}{0}{S0} ;
\quiver{->}{dotarrow}{dotarrow}{->}{->}{->}{0}{-1.5}{S1} ;
\quiver{dotarrow}{->}{->}{->}{->}{dotarrow}{0}{-3}{S2} ;

\quiver{->}{->}{dotarrow}{dotarrow}{dotarrow}{dotarrow}{5}{0}{Z0} ;
\quiver{dotarrow}{dotarrow}{dotarrow}{dotarrow}{->}{->}{5}{-1.5}{Z1} ;
\quiver{dotarrow}{dotarrow}{->}{->}{dotarrow}{dotarrow}{5}{-3}{Z2} ;

\node[right of=S0Centre, node distance=60] (S0Label) {$\bar{S}_0$};
\node[right of=S1Centre, node distance=60] (S1Label) {$\bar{S}_1$};
\node[right of=S2Centre, node distance=60] (S2Label) {$\bar{S}_2$};

\node[left of=Z0Centre, node distance=60] (Z0Label) {$\bar{Z}_0$};
\node[left of=Z1Centre, node distance=60] (Z1Label) {$\bar{Z}_1$};
\node[left of=Z2Centre, node distance=60] (Z2Label) {$\bar{Z}_2$};

\draw[->] (S0Label) -- (Z0Label) node[above,pos=0.5]{\scriptsize $\tau_0$};
\draw[->] (S1Label) -- (Z1Label) node[above,pos=0.5]{\scriptsize $\tau_1$};
\draw[->] (S2Label) -- (Z2Label) node[above,pos=0.5]{\scriptsize $\tau_2$};

\end{tikzpicture}
\end{center}
\caption{For the standard phase $X^{012}$, unstable subspaces $\bar{S_i}$ flowing under $1$-parameter subgroups $\tau_i$ to fixed loci $\bar{Z_i}$: each locus is defined by setting the dotted arrows to zero.}
\label{fig.strata}
\end{figure}
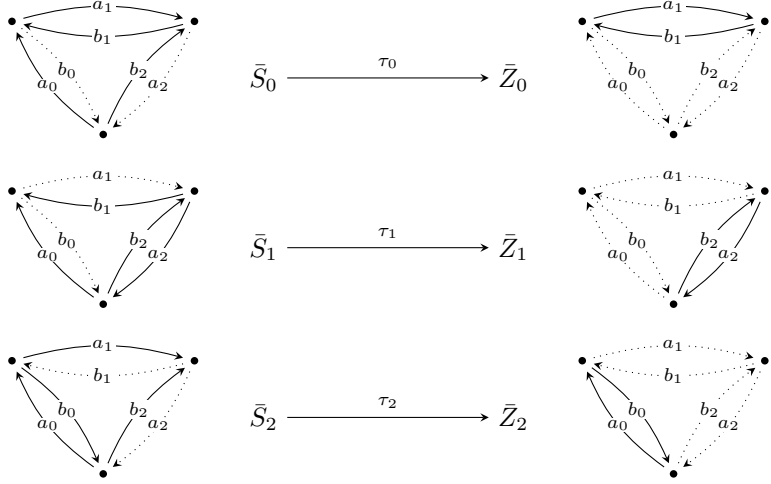

Now  pick a (non-zero) stability condition that lies on the wall between the two phases $X^{012}$ and $X^{021}$, so the corresponding unstable locus is $\bar{S_0}\cup \bar{S_1}$. The partial quotient
$$Y^{012}_{1,2} := (V - \bar{S_0}- \bar{S_1}) \,/\,\tau_0 $$
is smooth, and carries a residual action of $\tau_2$. The stack
$$\cY^{012}_{1,2} = \quotstack{(V - \bar{S_0}- \bar{S_1})}{T'} =  \quotstack{Y^{012}_{1,2}}{\tau_2} $$
is an open substack of $\cX$, and it contains both $X^{012}$ and $X^{021}$ as open substacks. We can view this as a GIT problem, and it describes this particular wall-crossing.
 
 Recall that we defined windows
$$\cW^{012}_{1,2}(k) \subset \Db(\cY^{012}_{1,2}) $$
as the full subcategories
$$\cW^{012}_{1,2}(k) = \genconds{\cE}{\begin{aligned} &\mbox{all homology sheaves of }L\sigma^* \cE\mbox{ have $\tau_2$-weights}\\[-5pt] &\mbox{lying in the interval } [k, k + 2) \end{aligned}} $$
where $\sigma: \bar{Z_2} \into Y^{012}_{1,2}$ is the inclusion of the fixed locus (recall also that we've calculated that the numerical invariant $\eta$ is 2). Each of these windows is equivalent, under the restriction functor, to the derived categories of both $X^{012}$ and $X^{021}$, and the wall-crossing functor $T^{012}_{1,2}$ is defined by the following commuting triangle:
$$\begin{tikzcd}
\, & \cW^{012}_{1,2}(0) \arrow{dl}\arrow{dr} &\, \\
 \Db(X^{012})\arrow{rr}{T^{012}_{1,2}} & \, & \Db(X^{021}) \end{tikzcd} $$
Very similar constructions apply for the other five wall crossings in the hexagon \eqref{eqn.hexagon}.

\begin{rem}  
For the horizontal arrows in \eqref{eqn.hexagon}, the 1-parameter subgroup controlling the wall-crossing is $\tau_0$. However, the value of the stability condition $\theta(\tau_0)$ is changing from negative to positive when we cross these walls, not vice versa. This means that when we define the two corresponding windows, $\cW^{102}_{0,2}(0)$ and $\cW^{021}_{0,2}(0)$, we must measure weights with respect to the subgroup $-\tau_0$, not $\tau_0$.\footnote{The functors $T^{102}_{0,2}$ and $T^{021}_{0,2}$ correspond to left-over-right crossings. If we were to use $\tau_0$-weights then we'd get the functors corresponding to right-over-left crossings.}
\end{rem}

\begin{lem}\label{lem.Vexists} There is a subcategory $\cV\subset \Db(\cX)$ such that, for any wall-crossing in the hexagon \eqref{eqn.hexagon}, the restriction functor
$$ \cV \to \Db(\cY^\sigma_{i,j}) $$
is an embedding with image $\cW^\sigma_{i,j}(0)$.
\end{lem}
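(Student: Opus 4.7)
The plan is to define $\cV$ to be the full triangulated subcategory of $\Db(\cX)$ generated by an explicit finite collection of equivariant line bundles. By the remark preceding the lemma, the six windows appearing in the hexagon are controlled by the three $1$-parameter subgroups $\tau_1$, $\tau_2$ and $-\tau_0$, each with $\eta_\pm = 2$, so that a character $\chi\in\chi(T')$ defines a line bundle $\cO(\chi)$ whose restriction to every wall-stack lies in the corresponding window as soon as $\chi_1\in[0,2)$, $\chi_2\in[0,2)$ and $-\chi_0\in[0,2)$.  Combining these three constraints with the identity $\chi_0+\chi_1+\chi_2=0$ that identifies $\chi(T')$ as a sublattice of $\chi(T)$, one obtains exactly three characters, namely $(0,0,0)$, $(-1,1,0)$ and $(-1,0,1)$.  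I take $\cV$ to be the subcategory generated by the corresponding three equivariant line bundles.

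With $\cV$ defined this way, the statement that the restriction $\cV\to\Db(\cY^\sigma_{i,j})$ factors through $\cW^\sigma_{i,j}(0)$ is immediate, since each of the three generators has by construction the correct weight with respect to each of the three controlling subgroups.  The real work is to show that each of the six restriction functors $\cV\to\cW^\sigma_{i,j}(0)$ is an equivalence.  For full faithfulness, I would use that every $\bar S_i$ is a codimension-$2$ linear subspace of $V$: by Hartogs-style vanishing, removing such a subspace does not change the sheaf cohomology of equivariant line bundles on $\cX$, so for any two generators $\cO(\chi),\cO(\chi')$ of $\cV$ the derived Hom groups $\Ext^*_\cX(\cO(\chi),\cO(\chi'))$ agree with their restrictions to every $\cY^\sigma_{i,j}$.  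For essential surjectivity, I would invoke the Koszul resolutions of the structure sheaves $\cO_{\bar S_i}$ and $\cO_{\bar S_j}$: since $\bar S_i$ and $\bar S_j$ are removed in forming $\cY^\sigma_{i,j}$, these Koszul complexes become exact on $\cY^\sigma_{i,j}$, and the resulting triangles let one express an arbitrary line bundle in the window as an iterated extension of the three generators of $\cV$.

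The main obstacle is the essential-surjectivity step.  A line bundle $\cO(\chi)$ belongs to $\cW^\sigma_{i,j}(0)$ as soon as only a single component of $\chi$ lies in the required interval, so one must check that iterated use of the Koszul triangles coming from $\bar S_i$ and $\bar S_j$ really does reduce all such characters down to our three specific characters.  This is a purely combinatorial verification on the rank-$2$ lattice $\chi(T')$, and it works out precisely because the relation $\tau_0+\tau_1+\tau_2=0$ makes the three weight constraints mutually compatible; in fact the three chosen line bundles are the minimal collection that lies simultaneously in all six windows, and their number matches the rank $3$ of $K_0$ of each phase, which is a consistency check that the subcategory has the right size.
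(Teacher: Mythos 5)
Your definition of $\cV$ is exactly the paper's (the subcategory generated by $\cO(0,0,0)$, $\cO(-1,1,0)$, $\cO(-1,0,1)$), and your skeleton -- weight check for containment in each $\cW^\sigma_{i,j}(0)$, then fully faithful plus essentially surjective -- is the ``direct'' route the paper itself says is possible in Remark~\ref{rem.philosophy_and_poss_generalizatn}, in place of the KN-stratification windows on $\cX$ that it actually uses (Proposition~\ref{prop.W0}). However, your full-faithfulness step rests on a false principle. Removing a codimension-$2$ linear subspace does \emph{not} leave higher equivariant cohomology of line bundles unchanged: already in the $k=1$ model of Section~\ref{sect.thethreefolds}, $\Ext^1_{\cX}(\cO,\cO(-2))=0$ because $\C^4$ is affine, yet after deleting the codimension-$2$ unstable locus one has $\Ext^1_{X_+}(\cO,\cO(-2))\supset H^1(\P^1,\cO(-2))\neq 0$. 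Hartogs only gives agreement of the $\Hom$'s. What actually kills the higher Exts between your three generators on $\cY^\sigma_{i,j}$ is a weight bound: the local cohomology of $\cO(\chi'-\chi)$ supported on a removed stratum $\bar{S}_i$ is spanned by monomials with strictly negative exponents in both defining coordinates, hence sits in weights outside the length-$2$ window for the destabilizing one-parameter subgroup, while the pairwise differences of your three characters pair with each $\tilde{\tau}_i$ to values strictly inside $(-2,2)$, so the invariant part of that local cohomology vanishes. This is precisely the window condition the paper imports from the general machinery of \cite{DHL,BFK}; without it your argument would equally ``prove'' that restriction is fully faithful on the subcategory generated by \emph{all} line bundles, which is false.

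The essential-surjectivity step is also only asserted, and your plan makes it harder than it needs to be: on $\cY^\sigma_{i,j}$ only the two Koszul complexes of the removed strata are exact, iterating them drags you through line bundles outside the window (terms of $\tau$-weight $-1$ appear and must be cancelled), and you have not shown that $\cW^\sigma_{i,j}(0)$ is generated by its line bundles in the first place. The paper's route avoids all of this: compose with the already-established equivalence $\cW^\sigma_{i,j}(0)\isoto\Db(X^\sigma)$, note that every object of $\Db(X^\sigma)$ has a finite resolution by the line bundles $\cO(a,b,c)$ because this already holds on $\cX$ ($V$ is a vector space), and observe that on $X^\sigma$ all three short exact sequences associated to the strata are exact, so repeated twisting reduces any $\cO(a,b,c)$ to the three generators. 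If you repair the full-faithfulness step with the weight/local-cohomology analysis (or simply cite the higher-rank window theorem, as the paper does) and run the surjectivity argument on $X^\sigma$ rather than on $\cY^\sigma_{i,j}$, your approach does go through and recovers the paper's proof.
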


We'll discuss the definition of this category $\cV$ shortly. Before we do, let's show that Proposition~\ref{prop.hexagoncommutes} follows from it as an immediate formality.

\begin{proof}[Proof of Proposition~\ref{prop.hexagoncommutes}]
For each wall-crossing we have a commuting diagram
$$\begin{tikzcd}
\, & \cV\arrow{d} \arrow{ddl}\arrow{ddr} &\, \\
\, & \cW^\sigma_{i,j}(0) \arrow{dl}\arrow{dr} &\, \\
 \Db(X^\sigma)\arrow{rr}{T^\sigma_{i,j}} & \, & \Db(X^{\sigma\cdot(ij)}) \end{tikzcd} $$
in which all arrows are equivalences. So we can complete the hexagon \eqref{eqn.hexagon} to the following diagram
$$\begin{tikzcd} 
\,& \Db(X^{102}) \arrow{rr}{T^{102}_{0,2}}   &\, & \Db(X^{120}) \drar{T^{120}_{1,2}} &\, \\
 \Db(X^{012}) \urar{T^{012}_{0,1}} \drar[swap]{T^{012}_{1,2}}&\, &
\cV \arrow{ll} \arrow{ul} \arrow{ur} \arrow{rr} \arrow{dr} \arrow{dl}
  & \,&  \Db(X^{210}) \\
\,&  \Db(X^{021}) \arrow{rr}[swap]{T^{021}_{0,2}}   &\, &   \Db(X^{201}) \urar[swap]{T^{201}_{0,1}} &\,
\end{tikzcd}$$
in which every arrow is an equivalence and every triangle commutes.
\end{proof}

\subsubsection{The category $\cV$}

If we consider a GIT problem where the group acting is just $\C^*$, then as we've explained in Section~\ref{sect.generaltheoryforwindows}, the machinery of \cite{BFK, DHL} tells us how to produce a section of the restriction functor
$$\Db\left(\quotstack{Y^\sigma_{i,j}}{\C^*}\right) \to \Db( Y^\sigma_{i,j} \sslash \C^*), $$
i.e. a lift of the derived category of the GIT quotient into the equivariant derived category of $Y^\sigma_{i,j}$. However, this general machinery doesn't just apply when the group is $\C^*$, it works for any reductive group. In particular, we can apply it to the GIT problem considered in the previous section, and for any phase $X^\sigma$ it will tell us  how to produce a section of the restriction functor
$$\Db(\cX) \to \Db(X^\sigma).$$
We'll now explain how to run this machinery for our example.\footnote{Again, the fact that our group is abelian makes our application considerably simpler than the general case.} We shall see that it constructs a subcategory of $\Db(\cX)$ which (after a little more work) we can show has the right properties to be the category $\cV$ required by Lemma \ref{lem.Vexists}.

 We begin with the standard phase $X^{012}$. We need to fix an ordering  of the three unstable subspaces, and for simplicity we'll choose the obvious one. Then we define
\begin{align} 
& Z_0 = \bar{Z_0} &    &       S_0 = \bar{S_0} \nonumber\\
&Z_1 = \bar{Z_1} - S_0 &&        S_1 = \bar{S_1} - S_0 \nonumber\\ \label{eqn.KNstrata}
&Z_2 = \bar{Z_2} - S_1 - S_0 && S_2 = \bar{S_2} - S_1 - S_0
\end{align}
so the unstable locus is the disjoint union of the three locally-closed subvarieties $S_0, S_1$ and $S_2$.  If we define $\tilde{\tau}_i = \tau_i$ for $i\in[1,2]$, and $\tilde{\tau}_0 = -\tau_0$, then  the subvariety $S_i$ is precisely the attracting subvariety of $Z_i$ under the subgroup $\tilde{\tau}_i$. This data defines a \textit{KN stratification} of the unstable locus, in the sense of \cite{DHL}.\footnote{This is a mild abstraction of a Kirwan--Kempf--Ness stratification. In the latter construction, the ordering on the strata is (partly) determined by a choice of inner product on the Lie algebra of $T'$, but that is irrelevant for our purposes.}

Now we proceed as we did in the rank 1 case (Section~\ref{sect.generaltheoryforwindows}), but for each of the strata simultaneously. So for each stratum $S_i$, we define the numerical invariant
$$\eta_i = \tilde{\tau}_i \mbox{-weight of }(\det N^\vee_{S_i}V)|_{Z_i} $$
and in this example we have $\eta_i=2$, for all $i$, as we've already calculated (Section~\ref{sect.wallcrossinginourexamples}). Make a choice of three integers $k_0, k_1, k_2 \in \Z$, one for each stratum. We define the corresponding `window'
$$\cW(k_0, k_1, k_2) \subset \Db(\cX) $$
to be the full subcategory 
$$\cW(k_0,k_1, k_2) = \genconds{\cE}{\begin{aligned} &\forall i,\mbox{ all homology sheaves of }L\sigma_i^* \cE\mbox{ have $\tilde{\tau}_i$-weights}\\[-5pt] &\mbox{lying in the interval } [k_i, k_i + \eta_i) \end{aligned}} $$
where $\sigma_i : Z_i \into V$ are the inclusion maps. The theorem is that the restriction functor
$$\cW(k_0,k_1, k_2) \to \Db(X^{012}) $$
is an equivalence, for any values of $k_0,k_1,k_2$.

Now let's move to the adjacent phase $X^{021}$, and construct a KN stratification by the same process (again using the obvious ordering on the unstable subspaces). Only the stratum $S_2$ changes. $S_0$ and $S_1$ stay the same, as do the fixed loci $Z_0,Z_1, Z_2$. Also the destabilizing 1-parameter subgroups are now $\tilde{\tau}_0=-\tau_0$, $\tilde{\tau}_1=\tau_1$, and $\tilde{\tau}_2=-\tau_2$. If we pick an integer $l_i$ for each stratum then again have a window
$$\cW^{021}(l_0, l_1, l_2) \subset \Db(\cX)$$
where the weight-restriction condition is now defined with respect to this new KN stratification. The general machinery tells us that this window is equivalent, under restriction, to $\Db(X^{021})$. However, because the $Z_i$ have not changed, it's immediate that
$$\cW^{021}(k_0, k_1, -k_2 -1) = \cW(k_0, k_1, k_2)$$
(the shift in the parameters here is just due to the slight asymmetry in the definition of a window). Consequently, we get some derived equivalences between $X^{012}$ and $X^{021}$, by lifting into one of these windows and then restricting down again. These equivalences are not new, because this is just another way to describe the derived equivalences that we discussed in Section~\ref{sect.wallcrossinginourexamples}.

 To see this, suppose that we don't restrict all the way down to $X^{012}$, but only to the larger open substack $\cY^{012}_{1,2}$. It's immediate from these definitions that any object in the window $\cW(k_0,k_1, k_2) $ restricts to give an object in the window $\cW^{012}_{1,2}(k_2)$, and it follows that the restriction functor
$$\cW(k_0,k_1, k_2) \to \cW^{012}_{1,2}(k_2)\;\subset \Db(\cY^{012}_{1,2})$$
is an equivalence. So our equivalence $T^{012}_{1,2}$, which we defined via the window $\cW^{012}_{1,2}(0)$, can also be defined via any of the windows $\cW(k_0, k_1, 0)$.

From this discussion, it's clear that these windows in $\Db(\cX)$ are candidates for the category $\cV$ required by Lemma~\ref{lem.Vexists}. Unfortunately there is a subtlety which means we cannot apply this machinery immediately. If we move to a more distant phase, more unstable loci will change, and because of the precise definition \eqref{eqn.KNstrata} of the KN strata this means that the $Z_i$ will change (their closures, the $\bar{Z_i}$, remain constant). Consequently, the definitions of the windows for different phases are different, and it is not immediately obvious that there is a single window lifting all of the $\cW^{\sigma}_{i,j}(0)$.\footnote{We are free to change our ordering on the strata for different phases, but this doesn't solve this problem.} To get around this, we use an alternative characterization of these windows.

\begin{prop} \label{prop.W0} Let $\cO(a,b,c)$ (with $a+b+c=0$) denote the equivariant line bundle on $V$ associated to the corresponding character of the torus $T'$, and let
$$ \cV = \left<\;\cO(0,0,0), \,\cO(-1,1,0), \,\cO(-1,0,1)\;\right> \subset \Db(\cX) $$
denote the subcategory generated by these three line bundles. Let $\cW(k_0,k_1,k_2)\subset \Db(\cX)$ be a window defined with respect to the KN stratification for the standard phase as described above. Then
$$\cW(0,0,0) = \cV. $$
\end{prop}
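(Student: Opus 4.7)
The inclusion $\cV \subseteq \cW(0,0,0)$ follows from a direct computation of $\tilde\tau_i$-weights. For the equivariant line bundle $\cO(a,b,c)$ with $a+b+c = 0$, the derived restriction $L\sigma_i^* \cO(a,b,c)$ is a line bundle on $Z_i$ concentrated in a single $\tilde\tau_i$-weight, given by the $i$-th coordinate of $(a,b,c)$ up to the sign flip $\tilde\tau_0 = -\tau_0$. For $(a,b,c) \in \{(0,0,0),(-1,1,0),(-1,0,1)\}$ these triples of weights are $(0,0,0), (1,1,0), (1,0,1)$, each entry lying in $[0,2) = [0,\eta_i)$; hence each generator of $\cV$ lies in $\cW(0,0,0)$.

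For the reverse inclusion I would use the windows equivalence of \cite{DHL, BFK, HLS} applied to the KN stratification of $\cX$ introduced just before the statement: restriction gives an equivalence $\cW(0,0,0) \isoto \Db(X^{012})$. Since $\cV \subseteq \cW(0,0,0)$ is a full triangulated subcategory, it is enough to show that the three line bundles of $\cV$, when restricted to $X^{012}$, generate $\Db(X^{012})$ as a triangulated category; the fully faithful embedding $\cV \hookrightarrow \cW(0,0,0)$ will then force equality.

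For this generation I would use Koszul resolutions of the three unstable subvarieties $\bar S_0, \bar S_1, \bar S_2$. Each $\bar S_j$ is a codimension-$2$ complete intersection of two coordinate functions and has an equivariant Koszul resolution by four $T'$-equivariant line bundles; for $\bar S_0 = \{b_0 = a_2 = 0\}$ this reads
$$ 0 \to \cO(-2,1,1) \to \cO(-1,1,0) \oplus \cO(-1,0,1) \to \cO \to \cO_{\bar S_0} \to 0, $$
and the analogous resolutions for $\bar S_1$ and $\bar S_2$ involve line bundles whose characters are determined by the pairs $(b_0, a_1)$ and $(b_1, a_2)$ respectively. After restricting to $X^{012}$, on which $\cO_{\bar S_j}$ vanishes, each Koszul becomes a short exact sequence of line bundles; twisting by an arbitrary character translates such a relation anywhere in the character lattice. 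Iteratively applying these twisted Koszul relations starting from the three window line bundles should express every $\cO(a,b,c)|_{X^{012}}$ in $\cV|_{X^{012}}$, and since $X^{012}$ is a smooth quasi-projective toric variety split-generated by its equivariant line bundles, the desired generation then follows.

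The main obstacle is the combinatorial induction organising this reduction. The three families of twisted Koszul relations give a system of moves on the rank-$2$ character lattice of $T'$, and I would set up an induction on a suitable norm to verify that for each character outside the window box $\{(a,b,c): -a, b, c \in [0,2)\}$ at least one such move strictly decreases the norm. This is the rank-$2$ manifestation of a general combinatorial principle underlying the windows formalism of \cite{HLS}: that for abelian GIT, the window subcategory is generated by the equivariant line bundles whose characters lie in the window box.
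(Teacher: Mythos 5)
Your proposal is correct and follows essentially the same route as the paper: the inclusion $\cV \subseteq \cW(0,0,0)$ by the weight computation, then essential surjectivity of the restriction to $\Db(X^{012})$ using the twisted Koszul sequences of the unstable strata $\bar S_0, \bar S_1, \bar S_2$ (these are exactly the paper's three short exact sequences of line bundles on the semistable locus), together with the fact that every object on the quotient of a vector space admits a finite resolution by equivariant line bundles. One caveat: the closing appeal to a ``general combinatorial principle'' --- that for abelian GIT the window is always generated by the equivariant line bundles whose characters lie in the window box --- is false in general (the paper's Remark~\ref{rem.delicate_features} makes precisely this point), but this does not damage your argument, since the explicit Koszul reduction you sketch is what carries the proof in this example.
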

\begin{proof}
If we evaluate any of these three characters on any of the three subgroups $\tau_1, \tau_2$ or $-\tau_0$ then the result is either 0 or 1, so these three line bundles do lie in $\cW(0,0,0)$ (and they are the only line bundles on $\cX$ which do). Furthermore, the definition of windows implies that any element of $\Db(\cX)$ which has a finite resolution by copies of these three line bundles also lies in $\cW(0,0,0)$. This proves that $\cV\subset \cW(0,0,0)$, and hence the restriction functor
\begin{equation}\label{eqn.restrictionfromcV} \cV \to \Db(X^{012})\end{equation}
is fully faithful. We claim that this functor is also essentially surjective. The proposition then follows since the inclusion of $\cV$ into $\cW(0,0,0)$ must be an equivalence.

 Since $V$ is a vector space, any element of $\Db(\cX)$, and hence any element of $\Db(X^{012})$, has a finite resolution by line bundles from the set $\set{\cO(a,b,c)}$. Also, on $X^{012}$ we have three short exact sequences
\begin{align*} 0 \longrightarrow \;\cO(-2,1,1)\; \xrightarrow{\left(\begin{smallmatrix}-a_2&b_0\end{smallmatrix}\right)} \cO(-1,1,0)\oplus \cO(-1,0,1) \xrightarrow{\left(\begin{smallmatrix}b_0\\a_2\end{smallmatrix}\right)} \cO \longrightarrow 0 \\
0 \longrightarrow  \cO(-1,2,-1) \xrightarrow{\left(\begin{smallmatrix}-a_1&b_0\end{smallmatrix}\right)} \cO(-1,1,0)\oplus \cO(0,1,-1) \xrightarrow{\left(\begin{smallmatrix}b_0\\a_1\end{smallmatrix}\right)} \cO \longrightarrow 0 \\
0 \longrightarrow  \cO(-1,-1,2) \xrightarrow{\left(\begin{smallmatrix}-a_2&b_1\end{smallmatrix}\right)} \cO(0,-1,1)\oplus \cO(-1,0,1) \xrightarrow{\left(\begin{smallmatrix}b_1\\a_2\end{smallmatrix}\right)} \cO \longrightarrow 0 
 \end{align*}
associated to the three strata of the unstable locus. By repeatedly using twists of these sequences we can resolve any of the line bundles $\cO(a,b,c)$ on $X^{012}$ in terms of the three line bundles that generate $\cV$. Consequently any element of $\Db(X^{012})$ has a finite resolution in terms of these three line bundles, and so \eqref{eqn.restrictionfromcV} is essentially surjective as claimed.
\end{proof}

\begin{rem}\label{rem.delicate_features} There are some delicate features of this argument that are worth highlighting. The fact that the values of the three parameters $k_0=k_1=k_2=0$ are correctly aligned is crucial: for most choices of these integers the window $\cW(k_0,k_1, k_2)$ contains no line bundles at all. Also the precise action of $\tilde{T}$ on $\tilde{V}$ is important: for a general example of a torus action on a vector space there won't be \textit{any} choice of integers on the strata such that the associated window is generated by line bundles.
\end{rem}

Now we can show that this definition of $\cV$ suffices to produce all six of our wall-crossing functors.

\begin{proof}[Proof of Lemma~\ref{lem.Vexists}.]
For each phase $X^\sigma$, we define $k^\sigma_i = 0$ or $k^\sigma_i = -1$ according to whether $\tau_i$ or $-\tau_i$ is the destabilizing subgroup in this phase, i.e. according to whether $\theta(\tau_i)$ is positive or negative.

Now let $\sigma$ and $\sigma'=\sigma\cdot(ij)$ be two consecutive phases in the hexagon \eqref{eqn.hexagon}. For the phase $X^\sigma$, pick any ordering on the unstable subspaces, and then define a window $\cW^\sigma(k^\sigma_0, k^\sigma_1, k^\sigma_2)\subset \Db(\cX)$ using the corresponding KN stratification. Arguing as above, the restriction functor
$$\cW^\sigma(k^\sigma_0, k^\sigma_1, k^\sigma_2) \to \Db(\cY^\sigma_{i,j})$$
is an embedding with image $\cW^\sigma_{i,j}(0)$. On the other hand, the proof of Proposition~\ref{prop.W0} adapts immediately to show that 
$$\cW^\sigma(k^\sigma_0, k^\sigma_1, k^\sigma_2) = \cV,$$
and so our claim is proved.
\end{proof}

\begin{rem}\label{rem.philosophy_and_poss_generalizatn} As we move around between phases, the definitions of our windows in $\Db(\cX)$ change, but this result says that the actual windows do not change (for appropriate choices of the parameters). We've proved this using the fact that the relevant windows are generated by line bundles, but this is a very special condition and we believe that this result should actually hold in more generality. If true, this would imply analogous relations between autoequivalences for other GLSM examples.

In fact this is the main reason that we've included some discussion of the general machinery of windows on $\cX$. If one wants to avoid it then it's reasonably straightforward to prove Lemma~\ref{lem.Vexists} directly from our definition of $\cV$.
\end{rem}

\subsection{The poset of groupoid actions}
\label{sect.posetGroupoidActions}

When discussing functors between derived categories of schemes, one doesn't normally work in $\Catni$ but rather in the subcategory $\mathbf{FM}$ where the morphisms between $\Db(Y)$ and $\Db(Z)$ are (isomorphism classes of) Fourier--Mukai kernels, that is objects of $\Db(Y\times Z)$. This subcategory is better-behaved in a number of ways. In our situation, we're actually working in a slightly more specialized subcategory.

 Recall that each of our varieties $X_\Gamma^\sigma$ is equipped with a fibration 
$$\mu: X_\Gamma^\sigma \to \C^s,$$
where $(s+1)$ is the number of parts of $\Gamma$. Define $\mathbf{FM}_{\C^s}$ to be the category whose objects are schemes flat over $\C^s$, and whose morphisms are relative Fourier--Mukai kernels, i.e. objects of 
$\Db(Y\times_{\C^s} Z). $
Now let $\tilde{\Gamma}$ be a coarsening of $\Gamma$, with $\tilde{\Gamma}$ having $(\tilde{s}+1)$ parts. Associated to this data is a linear inclusion
$$ j: \C^{\tilde{s}} \into \C^s, $$
and each subvariety $X_{\tilde{\Gamma}}^\sigma$ is the fibre product of the corresponding $X_\Gamma^\sigma$ over this subspace.

\begin{prop}\label{prop.restrictionofkernels} There is a functor
\begin{align*}\Res: \mathbf{FM}_{\C^s} & \to \mathbf{FM}_{\C^{\tilde{s}}} \\
Y & \mapsto Y\times_{\C^s} \C^{\tilde{s}},\end{align*}
acting on morphisms by the derived restriction functor
\begin{align*} \Db(Y\times_{\C^s} Z) & \xrightarrow{j^*} \Db( \Res(Y\times_{\C^s} Z )) \\
& = \Db\left( \Res(Y) \times_{\C^{\tilde{s}}} \Res(Z) \right ). \end{align*}
\end{prop}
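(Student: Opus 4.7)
The plan is to verify that $\Res$ is well-defined on objects, well-defined on morphism spaces, and preserves identities and composition; the only substantive ingredient is derived flat base change for the closed linear inclusion $j\colon\C^{\tilde s}\into\C^s$.

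For objects, if $Y$ is flat over $\C^s$ then $\Res Y = Y\times_{\C^s}\C^{\tilde s}$ is flat over $\C^{\tilde s}$ by base change of flatness. For morphism spaces, the standard rearrangement of iterated fibre products
\[
\Res(Y\times_{\C^s}Z) \;=\; (Y\times_{\C^s}Z)\times_{\C^s}\C^{\tilde s} \;\cong\; \Res Y \times_{\C^{\tilde s}} \Res Z
\]
ensures that $j^*K$ lives in the correct derived category. Identities are preserved because $\Res(\Delta_Y) = \Delta_{\Res Y}$ as subschemes of $\Res(Y\times_{\C^s}Y)$, and $j^*\cO_{\Delta_Y}\cong\cO_{\Delta_{\Res Y}}$, using flatness of $Y$ so that the derived pullback agrees with the underived one.

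The substantive step is preservation of composition of Fourier--Mukai kernels. Given $K\in\Db(Y\times_{\C^s}Z)$ and $L\in\Db(Z\times_{\C^s}W)$, their convolution is
\[
L\circ K \;=\; \RDerived(\pi_{YW})_*\bigl(\pi_{YZ}^*K \otimes \pi_{ZW}^*L\bigr)
\]
on the triple fibre product $Y\times_{\C^s}Z\times_{\C^s}W$. I would push $j^*$ through each of the three operations in turn: interchange with the pullbacks $\pi_{YZ}^*$ and $\pi_{ZW}^*$ is immediate, interchange with the derived tensor product uses that $j^*$ is symmetric monoidal, and the only nontrivial step is interchange with the pushforward $\RDerived(\pi_{YW})_*$.

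The hard part, and really the only content of the proof, is this final interchange: it amounts to derived flat base change for the cartesian square whose bottom row is $\Res(Y\times_{\C^s}W)\to Y\times_{\C^s}W$. This is where the flatness hypothesis built into $\mathbf{FM}_{\C^s}$ is used crucially: since $Y$, $Z$, and $W$ are each flat over $\C^s$, so is the triple fibre product, and hence it is Tor-independent from the closed embedding $j$. Derived flat base change then supplies the natural isomorphism $j^*\RDerived(\pi_{YW})_*\cong\RDerived(\pi'_{YW})_* j^*$, where $\pi'_{YW}$ denotes $\pi_{YW}$ pulled back along $j$. Combining the three interchanges yields $\Res(L\circ K) = \Res L \circ \Res K$, completing the verification that $\Res$ is a functor.
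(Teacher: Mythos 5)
Your proposal is correct and follows essentially the same route as the paper: the whole content is the compatibility of $j^*$ with convolution, and the only nontrivial step is commuting $j^*$ past the pushforward $\RDerived(\pi_{YW})_*$, which both you and the paper settle by base change for the same cartesian square using the flatness built into $\mathbf{FM}_{\C^s}$ (the paper invokes flatness of the projection $p_{13}$, i.e.\ of the middle factor over the base, while your flatness of the triple fibre product together with flatness of $Y\times_{\C^s}W$ gives the same Tor-independence). The paper phrases the argument for an arbitrary morphism $f\colon T\to S$ rather than just the linear inclusion $j$, and omits the routine checks on objects and identities that you include, but the computation is identical.
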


\begin{proof}Indeed we may replace the linear inclusion
$ j:\C^{\tilde{s}} \into \C^s $ with any morphism of schemes $f: T \to S$, so that we define $\Res(Y) :=Y_T= Y \times_{S} T$, and use derived pullback $f^*$ for the morphisms. Then taking schemes $X$, $Y$, and $Z$, all flat over $S$, and Fourier--Mukai kernels $$P \in \Db( X \times_S Y), \qquad Q \in \Db( Y \times_S Z),$$ we verify that $\Res$ is indeed a functor: composition of morphisms in $\mathbf{FM}_S$ is by convolution of kernels, so we require $$\Res(P \ast Q) \cong \Res(P) \ast \Res(Q).$$ We have a diagram
$$\begin{tikzcd} X_T \times_T Y_T \times_T Z_T \dar{p'_{13}} \drar[pos=0.3]{f'} \\
X_T \times_T \drar[pos=0.25]{f} Z_T & X \times_S Y \times_S Z \dlar[swap,pos=0.25]{p_{12}} \dar[pos=0.3]{p_{13}} \drar[pos=0.3]{p_{23}} \\
X \times_S Y & X \times_S Z & Y \times_S Z \end{tikzcd}$$
and by definition
\begin{align*}\Res(P \ast Q) & := \Res(p_{13*} (p^*_{12} P \otimes p^*_{23} Q)) \\
 & := f^* p_{13*} (p^*_{12} P \otimes p^*_{23} Q) \\ 
 & \cong p'_{13*} f'^* (p^*_{12} P \otimes p^*_{23} Q) \\
&  \cong p'_{13*} (f'^* p^*_{12} P \otimes f'^* p^*_{23} Q) \end{align*}
  where the first isomorphism is base change using flatness of $p_{13}$ \cite[Theorem 3.10.3]{LipmanNotes}. The proof is finished using the commutativity of the following two squares
$$\begin{tikzcd}
X_T \times_T Y_T \dar{f} & X_T \times_T Y_T \times_T Z_T \lar[swap]{p'_{12}} \dar{f'} \rar{p'_{23}} & Y_T \times_T Z_T \dar{f} \\
X \times_S Y  & X \times_S Y \times_S Z \lar[swap]{p_{12}} \rar{p_{23}} & Y \times_S Z
\end{tikzcd}$$
and the definition of  $\Res(P) \ast \Res(Q)$.\end{proof}

Recall that in Section \ref{sect.heuristicsforthegenerators} we defined derived equivalences 
$$T^{\sigma S_\Gamma}_{i,j}: D^b(X^\sigma_\Gamma) \to D^b(X^{\sigma\cdot (ij)}_\Gamma)$$ 
corresponding to generators of the groupoid $\cG_\Gamma$.

\begin{prop} Each of our functors $T^{\sigma S_\Gamma}_{i,j}$ is actually a morphism in $\mathbf{FM}_{\C^s}$, i.e. their kernels are well-defined relative to the fibration $\mu$.\end{prop}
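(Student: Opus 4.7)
The plan is to exhibit, for each generator $T^{\sigma S_\Gamma}_{i,j}$, an explicit Fourier--Mukai kernel and check that it is scheme-theoretically supported on the fibre product $X^\sigma_\Gamma \times_{\C^s} X^{\sigma\cdot(ij)}_\Gamma$ over the deformation base. The guiding observation is that every stack, variety, and morphism involved in the construction of $T^{\sigma S_\Gamma}_{i,j}$ naturally lies over $\C^s$: the fibration $\mu$ is defined by invariant functions $\tilde z_t - \tilde z_{t'}$ on the ambient vector space, and these invariant functions descend to the wall-crossing stack $\quotstackinline{\tilde V^{ss}}{\tilde T}$ of Section~\ref{sect.wallcrossinginourexamples}, hence to every GIT phase and to their common roof.

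For case (\ref{casea}), where $T^{\sigma S_\Gamma}_{i,j}=\psi_0$ is the window/flop equivalence, I would use the family version of Proposition~\ref{prop.windowisflop} (cited from \cite[Section 3.1]{HLS}) to realise $\psi_0$ as the Fourier--Mukai transform along the common birational roof $\tilde X$ of $X^\sigma_\Gamma$ and $X^{\sigma\cdot(ij)}_\Gamma$, twisted by a suitable line bundle. The two projections $\pi_\pm : \tilde X \to X^\sigma_\Gamma, X^{\sigma\cdot(ij)}_\Gamma$ both commute with $\mu$ since the birational map is an isomorphism on the invariant functions $\tilde z_t$, so the closed embedding $\tilde X \hookrightarrow X^\sigma_\Gamma \times X^{\sigma\cdot(ij)}_\Gamma$ factors through the fibre product over $\C^s$. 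The kernel is therefore automatically a relative one.

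For case (\ref{caseb}), where $T^{\sigma S_\Gamma}_{i,j}=T_{\mathcal{S}}$ is the family spherical twist of \eqref{eqn.ST_family_twist}, the data $(\iota,\mu)$ defining the spherical functor $\mathcal{S}$ already lies over $\C^s$: the parameter space $\C^r$ of the family $\tilde S_i$ is a linear subspace of $\C^s$, $\iota: \tilde S_i \hookrightarrow X^\sigma_\Gamma$ is a closed embedding over $\C^s$, and $\mu: \tilde S_i \to \C^r$ factors through this. The FM kernel $\iota_*\mathcal{O}_{\tilde S_i}(-1)$ of $\mathcal{S}$ (and the analogous kernel for $\mathcal{R}$) is thus supported on the fibre product, and the same holds for the kernel of $T_{\mathcal{S}}$, obtained as the cone on $\mathcal{S}\mathcal{R}(\mathcal{O}_\Delta)\to \mathcal{O}_\Delta$, since the relative diagonal embedding $X^\sigma_\Gamma \hookrightarrow X^\sigma_\Gamma \times_{\C^s} X^\sigma_\Gamma$ is itself a closed substack of the ordinary diagonal.

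The main technical step, and the only place a check is really needed, is to promote the set-theoretic statement to the scheme-theoretic one required by Proposition~\ref{prop.restrictionofkernels}: namely that the derived restriction of the kernel from $X^\sigma_\Gamma \times X^{\sigma\cdot(ij)}_\Gamma$ to $X^\sigma_\Gamma \times_{\C^s} X^{\sigma\cdot(ij)}_\Gamma$ recovers the kernel itself, with no spurious higher Tor. For case (\ref{casea}) this is automatic once one observes that $\tilde X$ is flat over $\C^s$ (both phases, hence the roof, being smooth toric morphisms), so the pullback of $\mathcal{O}_{\tilde X}$ along the regular embedding of the fibre product into the absolute product has no higher Tor. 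For case (\ref{caseb}), the same argument applied to $\tilde S_i \to \C^r$, together with the compatibility of cones with pullback, suffices. Once these flatness checks are in place, the proposition follows.
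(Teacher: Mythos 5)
Your proposal is correct and takes essentially the same route as the paper: in case (a) the kernel is pushed forward from the common birational roof, which maps to the fibre product over $\C^s$ because both projections commute with $\mu$, and in case (b) you check the claim by direct inspection of the family spherical twist kernel, all of whose defining data ($\tilde{S}_i$, $\C^r\subset\C^s$, the relative diagonal) already lives over $\C^s$ — exactly the two options the paper uses. Your closing flatness/Tor step is superfluous, though harmless: the kernels are by construction pushforwards along closed embeddings that factor through $X^\sigma_\Gamma\times_{\C^s}X^{\sigma\cdot(ij)}_\Gamma$, so one never needs to derived-restrict the absolute kernel to the fibre product at all.
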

\begin{proof}
 When $T^{\sigma S_\Gamma}_{i,j}$ corresponds to a flop between two distinct phases (case \hyperref[casea]{(a)})  then this is clear from the geometric construction of the functor, because the common birational roof of the two phases is evidently defined relative to $\mu$. When $T^{\sigma S_\Gamma}_{i,j}$ is a family spherical twist acting on a single phase (case \hyperref[caseb]{(b)}) then one can either verify the claim directly by inspecting the kernel for a family spherical twist, or just deduce it from case (a) using the next proposition.\end{proof}

\begin{prop}\label{prop.groupoid_action_intertwinement} Choose a partition $\Gamma$, and let $\tilde{\Gamma}$ be a coarsening of $\Gamma$. Pick $\sigma, i, j$ such that the equivalence $T^\sigma_{i,j}$ is defined (and hence so are $T^{\sigma S_\Gamma }_{i,j}$ and $T^{\sigma S_{\tilde{\Gamma}}}_{i,j}$).  Then
\begin{equation}\label{eqn.kernels_restrict} \Res \left(T^{\sigma S_\Gamma }_{i,j}\right) = T^{\sigma S_{\tilde{\Gamma}}}_{i,j}, \end{equation}
and in particular $T^{\sigma S_\Gamma }_{i,j}$ and $T^{\sigma S_{\tilde{\Gamma}}}_{i,j}$ are intertwined by pullback along the inclusions.\end{prop}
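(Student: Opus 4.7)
The plan is to establish \eqref{eqn.kernels_restrict} by writing out the Fourier--Mukai kernel of $T^{\sigma S_\Gamma}_{i,j}$ geometrically in each of the two cases of Section~\ref{sect.heuristicsforthegenerators}, and then verifying that this description is stable under derived base change along $j\colon\C^{\tilde s}\into\C^{s}$. The intertwinement with pullback is then automatic from the functoriality of $\Res$ established in Proposition~\ref{prop.restrictionofkernels}.

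First, when $i$ and $j$ lie in the same part of $\Gamma$ (so necessarily in the same part of $\tilde\Gamma$ as well), the kernel of $T^{\sigma S_\Gamma}_{i,j}$ is that of the family spherical twist \eqref{eqn.ST_family_twist} built from the inclusion $\iota\colon\tilde S_i\into X_\Gamma^\sigma$ and the fibration $\mu$. Since $\tilde S_i\cap X_{\tilde\Gamma}^\sigma$ and $X_{\tilde\Gamma}^\sigma$ are the base changes of $\tilde S_i$ and $X_\Gamma^\sigma$ along $j$, and the constituent functors $\iota_*$, $\mu^*$, $\iota^!$ all commute with the appropriate (flat) pullbacks, $\Res$ sends this kernel to the analogous twist kernel for $\tilde\Gamma$.

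Second, when $i$ and $j$ lie in distinct parts of $\Gamma$, the kernel of $T^{\sigma S_\Gamma}_{i,j}=\psi_0$ is a suitable twist of the structure sheaf of the common birational roof of the flop between $X^\sigma_\Gamma$ and $X^{\sigma\cdot(ij)}_\Gamma$, as in Section~\ref{section.geom_descrip_equiv}. Both this roof and the two phases are flat over $\C^{s}$, so derived base change along $j$ yields the structure sheaf of the analogous roof over $\C^{\tilde s}$. If $i,j$ also lie in distinct parts of $\tilde\Gamma$, this is precisely the flop kernel for $\tilde\Gamma$ and we are done.

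The main obstacle is the remaining mixed case, where $i,j$ lie in distinct parts of $\Gamma$ but in the same part of $\tilde\Gamma$. Here $X^\sigma_{\tilde\Gamma}=X^{\sigma\cdot(ij)}_{\tilde\Gamma}$, and we must show that the restricted roof defines a kernel isomorphic to the family spherical twist. This is a family version of Proposition~\ref{prop.flopinducestwist}. I would prove it by the same strategy used there: \'etale-locally on $\C^{\tilde s}$ the flop geometry is a trivial family of standard $A_1$ three-fold flops (case~(a) of Section~\ref{sect.familiesofrationalcurves}), so the fibrewise argument of Proposition~\ref{prop.flopinducestwist} identifies the restricted flop with the spherical twist in each fibre. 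Alternatively, one can compare the two candidate kernels globally through a common window on $\mathcal{X}$ generated by line bundles, checking the identification on these line bundles and on morphisms between them, just as in Propositions~\ref{prop.windowisflop} and~\ref{prop.flopinducestwist}. Flatness of the varieties involved ensures that these identifications pass to the derived level, giving \eqref{eqn.kernels_restrict} in this last case and hence the intertwinement statement.
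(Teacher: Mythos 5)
Your proposal is correct and takes essentially the same route as the paper: a case analysis according to whether $i$ and $j$ lie in the same part of $\Gamma$ and of $\tilde{\Gamma}$, reduction to the simple Zariski neighbourhoods where everything is a trivial family of the $A_1$ geometries, and a family version of Proposition~\ref{prop.flopinducestwist} for the mixed case, with the intertwinement statement then following from \eqref{eqn.kernels_restrict} by standard base change. One small caution: in the mixed case a purely fibrewise (or \'etale-local) identification of the two kernels does not by itself yield an isomorphism of kernels over the base, so you should lean on your second alternative --- the trivial-family version of the line-bundle-generator argument of Propositions~\ref{prop.windowisflop} and~\ref{prop.flopinducestwist} --- which is precisely what the paper's proof does.
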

\begin{proof}
Let $\sigma' = \sigma\cdot (ij)$. We have four varieties as follows:
$$\begin{tikzcd} X^\sigma_\Gamma \rar[<->,dashed] & X^{\sigma'}_\Gamma  \\
X^{\sigma\phantom{'}}_{\tilde{\Gamma}} \uar[hook]{j} \rar[<->,dashed] & X^{\sigma'}_{\tilde{\Gamma}} \uar[hook,swap]{j} \end{tikzcd} $$
The last part is the statement that $$ j^* \circ T^{\sigma S_{\Gamma}}_{i,j} \cong T^{\sigma S_{\tilde{\Gamma}} }_{i,j} \circ j^*  ,$$ and this follows from \eqref{eqn.kernels_restrict} using standard base change results \cite[Proposition 6.1]{FMNahm}.

To prove \eqref{eqn.kernels_restrict}, suppose firstly that $i$ and $j$ are in different parts of $\tilde{\Gamma}$, and so also in different parts of $\Gamma$. Then both birational maps are flops of a trivial family of  rational $(-1,-1)$-curves (as described in Section~\ref{sect.familiesofrationalcurves}), and the functors $T^{\sigma\Gamma}_{i,j}$ and $T^{\sigma\tilde{\Gamma}}_{i,j}$ are the associated birational equivalences. Away from the flopping families both functors are trivial, so we only need to check the equality inside our very simple Zariski neighbourhoods of the flopping families. In these neighbourhoods, we're considering a trivial family of standard flops over $\C^{s-1}$, and then just restricting to a subspace $\C^{\tilde{s}-1}\subset \C^{s-1}$, so the equality of the functors is clear.
 
Now suppose that $i$ and $j$ are in the same part of $\Gamma$, and so also in the same part of $\tilde{\Gamma}$. A similar argument applies, except that now the birational maps are identities and the functors are family spherical twists. 

The final case is that $i$ and $j$ are in different parts of $\Gamma$, but in the same part of $\tilde{\Gamma}$. By the same argument we can reduce to our simple Zariski neighbourhoods, and then we're considering a trivial family of copies of the geometry considered in Proposition~\ref{prop.flopinducestwist}, so the argument from there suffices.
\end{proof}

Combining the above three propositions with Proposition~\ref{prop.braidrelationsonversal}, we can formally deduce the whole of Theorem~\ref{thm.mainthm}.

\begin{proof}[Proof of Theorem~\ref{thm.mainthm}]
As before we let $\cG_\Gamma^{\free}$ be the free groupoid generated by all the arrows $t^{\sigma S_\Gamma}_{i,j}$, so $\cG_\Gamma$ is a the quotient of $\cG_\Gamma^{\free}$ by the braid relations. If $\tilde{\Gamma}$ is a coarsening of $\Gamma$, then the above three propositions give us a commutative square as follows:
$$\begin{tikzcd} \cG_\Gamma^{\free} \rar{T_\Gamma} \dar & \mathbf{FM}_{\C^s} \dar{\Res} \\
\cG_{\tilde{\Gamma}}^{\free} \rar{T_{\tilde{\Gamma}}} & \mathbf{FM}_{\C^{\tilde{s}}} \end{tikzcd} $$
If we set $\Gamma = \Gamma_{\fin}$, then Proposition~\ref{prop.braidrelationsonversal} says that $T_\Gamma$ factors through $\cG_\Gamma$. Therefore $T_{\tilde{\Gamma}}$ must factor through $\cG_{\tilde{\Gamma}}$, and this holds for all partitions $\tilde{\Gamma}$.
\end{proof}

Informally, the above proof just says that since the braid relations hold between the equivalences on the ambient space, they must continue to hold when we restrict these equivalences to subvarieties.

Finally, we use the faithfulness result of Seidel--Thomas to deduce Theorem~\ref{thm.faithfulness}.

\begin{proof}[Proof of Theorem~\ref{thm.faithfulness}]
Let $\Gamma$ be any partition, and consider coarsening it to $\Gamma_{\crs}$. We have a commutative square as follows:
$$\begin{tikzcd}[column sep=16pt]  & \cG_\Gamma \arrow{rr}{T_\Gamma} \dar & & \mathbf{FM}_{\C^s} \dar{\Res} \\
B_{k+1} \arrow[-,transform canvas={yshift=+\sepForEquals}]{r} \arrow[-,transform canvas={yshift=-\sepForEquals}]{r} & \cG_{\crs} \arrow{rr}{T_{\crs}} & & \mathbf{FM}_{\pt} & \phantom{B_{k+1}}
 \end{tikzcd} $$
The functor $T_{\crs}$ is precisely the braid group action on the derived category of the surface $Y_0 = X_{\crs}$ considered in \cite{ST}, and it is proved there that the action is faithful. Since the functor $\cG_\Gamma \to B_{k+1}$ is also faithful, we must have that $T_\Gamma$ is faithful.
\end{proof}

\newpage
\appendix
\section{List of notations}
\label{list of notations}

\begin{longtable}{p{1.8cm} p{11cm} p{3cm}}
$T \curvearrowright V$ & torus $T$ acting on vector space $V$ & \\
$\theta$ & character of $T$ \\
$X_\theta$ & GIT quotient $V \sslash_\theta T$ \\
$Y_0$ & small resolution of $A_k$ surface singularity \\
$\cX$ & Artin quotient stack $\quotstack{V}{T}$ \\
$\cF$ & Fayet--Iliopoulos (FI) parameter space \\
$\Db$ & bounded derived category of coherent sheaves\\ 
$B_{k+1}$ & braid group on $k+1$ strands, labelled $[0,k]$ \\
$\tilde{B}_{k+1}$ & affine braid group associated to affine Dynkin diagram $\tilde{A}_k$ \\
$\cW$ & window, in the bounded derived category of a stack & \S\ref{sect.A1case}, \S\ref{sect.derivedequivalencesfromwalls} \\
$\psi$ & window equivalence, corresponding to wall-crossing & \eqref{eqn.3fold_window_equiv}, \S\ref{sect.derivedequivalencesfromwalls} \\
$T_{\mathcal{S}}$ & spherical twist, around spherical object (or functor) $\mathcal{S}$ & \eqref{eqn.ST_twist}, \eqref{eqn.ST_family_twist} \\
$a_i$, $b_i$ & co-ordinates on $V$ for clockwise, and anti-clockwise, arrows & \S\ref{section.statement} \\
$\mu_i$ & complex moment maps & \eqref{eqn.moment_maps} \\
$\Gamma$ & partition of $[0,k]$, with pieces indexed by $[0,s]$ & \S\ref{section.construction} \\
$S_\Gamma$ & Young subgroup of $S_{k+1}$, preserving partition $\Gamma$ & \eqref{eqn.mixed_braid_defn} \\
$B_\Gamma$ & mixed braid group: subgroup of $B_{k+1}$, preserving partition $\Gamma$ & \eqref{eqn.mixed_braid_defn} \\
$Q$ & toric data: matrix of weights & \S\ref{sect.reps_free_quiver} \\
$\Delta^k$ & $k$-simplex & \S\ref{sect.reps_free_quiver} \\
$\Pi$ & polytope & \S\ref{sect.reps_free_quiver} \\
$\polyTopVec{i}, \polyBaseVec{i}$ & vectors in polytope, generating rays in the fan for $X$ & \S\ref{sect.reps_free_quiver} \\ 
$\vartheta$ & lift of character $\theta$ & \eqref{thetaform} \\
$X_\Gamma$ & moment map subvariety of $X$ & \S\ref{sect.subvarieties} \\
$\gamma$ & function $[0,k] \to [0,s]$ encoding partition $\Gamma$ & \S\ref{sect.subvarieties} \\
$N_t$ & size of $t^{\text{th}}$ piece of partition $\Gamma$ & \S\ref{notn.coords} \\
$\polyVec{t}{\delta}$ & vectors in polytope, generating rays in the fan for $X_\Gamma$ & \eqref{eq.polyVecDef} \\
$\tilde{V} \sslash \tilde{T}$ & GIT quotient giving subvariety $X_\Gamma$ & \S\ref{sect.subvarietyGITdescrip} \\
$X^\sigma_\Gamma$ & phase corresponding to partition $\Gamma$, permutation $\sigma$ & \S\ref{sect.subvarietyGITdescrip} \\
$S_i$, $\tilde{S}_i$ & flopping subvarieties of $X$, $X_\Gamma$ & \eqref{eq.twistSubvarieties}, \eqref{eqn.twistSubvarietiesNonVersal} \\
$\cF_\Gamma$ & FI parameter space for variety $X_\Gamma$ & \S\ref{sect.resultsForExamples} \\
$\zeta_t$ & FI parameter & \S\ref{sect.resultsForExamples} \\ 
$\Gamma_{\fin}$, $\Gamma_{\crs}$ & finest, and coarsest, partitions $\Gamma$ & \S\ref{sect.resultsForExamples} \\
$\cG_\Gamma$ & a path groupoid, associated to $\cF_\Gamma$ & \S\ref{sect.heuristicsforthegenerators} \\
$t^{\sigma S_\Gamma}_{i,j}$ & a morphism in $\cG_\Gamma$, crossing adjacent strands $i$ and $j$ & \S\ref{sect.heuristicsforthegenerators} \\
$T^{\sigma S_\Gamma}_{i,j}$ & an equivalence, with source $\Db(X^\sigma_\Gamma)$ & \S\ref{sect.heuristicsforthegenerators} \\
$\Catni$ & category of categories, with functors up to isomorphism & \S\ref{sect.heuristicsforthegenerators} \\
$\tau_i$ & $1$-parameter subgroup of $T$ & \S\ref{sect.formalStructureOfProof} \\
$S_i$ & unstable stratum & \S\ref{sect.formalStructureOfProof}\\
$Z_i$ & fixed loci of unstable stratum & \S\ref{sect.formalStructureOfProof}\\
$\eta_i$ & window width & \S\ref{sect.formalStructureOfProof}\\
$\mathbf{FM}$, $\mathbf{FM}_{B}$ & category of Fourier--Mukai kernels between varieties (relative to $B$) & \S\ref{sect.posetGroupoidActions} \\
\end{longtable}
\newpage

\end{document}